\newcommand{\R}{{\mathbb R}}
\newcommand{\T}{{\mathbb T}}
\newcommand{\Z}{{\mathbb Z}}
\newcommand{\cS}{{\mathcal S}}
\newcommand{\e}{\varepsilon}
\newcommand{\vp}{\varphi}
\newcommand{\p}{\partial}
\newcommand{\ra}{\rightarrow}
\newcommand{\norm}[1]{\left\Arrowvert {#1}\right\Arrowvert}
\newcommand{\osc}{\operatornamewithlimits{osc}}
\newcommand{\tr}{\operatorname{tr}}
\theoremstyle{plain}
\newtheorem{theorem}{Theorem}[section]
\newtheorem{corollary}[theorem]{Corollary}
\newtheorem{lemma}[theorem]{Lemma}
\newtheorem{proposition}[theorem]{Proposition}
\theoremstyle{definition}
\newtheorem{example}[theorem]{Example}
\theoremstyle{remark}
\newtheorem{case}{Case}
	\numberwithin{case}{theorem}
\newtheorem{remark}[theorem]{Remark}
\numberwithin{equation}{section}
\title[Higher Order Convergence Rates: Oscillatory Initial Data]{Higher Order Convergence Rates in Theory of Homogenization II: Oscillatory Initial Data}
\author{Sunghan Kim}
\address{Department of Mathematical Sciences, Seoul National University, Seoul 08826, Korea}
\email{sunghan290@snu.ac.kr}
\author{Ki-Ahm Lee}
\address{Department of Mathematical Sciences, Seoul National University, Seoul 08826, Korea
\& Korea Institute for Advanced Study, Seoul 02455, Korea}
\email{kiahm@snu.ac.kr}
\thanks {S. Kim was supported by National Research Foundation of Korea under Project Number NRF-2014H1A8A1021602. K.-A. Lee was supported by Samsung Science and Technology Foundation under Project Number SSTF-BA1701-03. K.-A. Lee also holds a joint appointment with the Research Institute of Mathematics of Seoul National University.}
\begin{document}

\maketitle
\begin{abstract} 
We establish higher order convergence rates in periodic homogenization of fully nonlinear uniformly parabolic Cauchy problems accompanied with rapidly oscillating initial data. Such result is new even for linear problems. Here we construct higher order initial layer and interior correctors, which describe the oscillatory behavior near the initial and interior time zone of the domain. To construct higher order correctors, we develop a regularity theory in macroscopic scales, and prove an exponential decay estimate for initial layer correctors. 

The higher order expansion requires an iteration process: successively correcting the initial layer, then the interior. This leads to a more complicated asymptotic expansion, as compared to the non-oscillating data case, and this complexity is present even in the linear case. 

A notable observation for fully nonlinear operators is that even if the given operator is space-time periodic, the interior correctors become aperiodic in the time variable as we proceed with the iteration process. Moreover, each interior corrector of higher order is paired with a space-time periodic version, and the difference between the two decays exponentially fast with time. 
\end{abstract}
\maketitle
\tableofcontents


\section{Introduction}\label{section:intro}

Homogenization of a rapidly oscillating data on a lower dimensional object has been an important topic in analysis, not only because of its major implications in other fields, but also because of the mathematical challenges it introduces. One of the central issues in such a homogenization problem is quantifying the rate of its process. Although the homogenization problem was solved for various settings (e.g., see \cite{KLS} for linear Dirichlet problems, \cite{F} for nonlinear Dirichlet problems, \cite{CKL} for nonlinear Neumann problems, etc.), there is only a small number of results concerning convergence rate. Moreover, most of those results are focused on linear Dirichlet problems of divergence type (e.g., see \cite{AKMP,GM} for recent developments in this direction). As a matter of fact, sharp estimates are still open for Dirichlet or Neumann problems of non-divergence type, even for linear equations. 

In this article, we shall study the higher order convergence rate for a homogenization problem with a highly oscillating data on a lower dimensional object. Our analysis will be based on the theory of viscosity solutions and cover both linear and fully nonlinear equations. 

In order to simplify the structure of lower dimensional objects, this paper will treat a space-time periodic Cauchy problem. In this case, the corresponding lower dimensional object is the initial time layer, which is a hyperplane that persists the periodic lattice structure in the interior of the domain. In this way, we can focus on effects solely coming from a given rapidly oscillating boundary data, but avoid some sophisticated issues that arise when the boundary layer has a qualitatively different oscillating pattern from that of the interior (e.g., when the normal direction is a totally irrational vector, the oscillating pattern of a periodic data restricted to the boundary layer becomes quasi-periodic), or when the boundary layer is bended and one has to take the curvature effect into account.  

Nevertheless, the problem under such a simple structure already exhibits some challenges, as we aim to construct higher order correctors for the given homogenization process. It should be remarked that such challenges are present even in the context of linear equations. In the framework of fully nonlinear equations, we face more complexity, since the nonlinearity of the given operator changes the oscillating pattern of the higher order correctors.


\subsection{Main Result}\label{subsection:result}

Let us consider the following second order parabolic Cauchy problem, 
\begin{equation}\label{eq:ue-pde}
\begin{dcases}
u_t^\e = \frac{1}{\e^2} F \left( \e^2 D^2 u^\e , x, t, \frac{x}{\e}, \frac{t}{\e^2} \right) & \text{in }\R^n\times(0,T),\\
u^\e(x,0) = g\left( x, \frac{x}{\e} \right) & \text{on }\R^n,
\end{dcases}
\end{equation}
where $F$ is a smooth, concave, uniformly parabolic functional periodic in $(\e^{-1}x,\e^{-2}t)$, and $g$ is a smooth data periodic in $\e^{-1}x$. The precise assumptions will be made later in Section \ref{section:assump}. If $F$ is a linear functional in the Hessian variable, the problem reads
\begin{equation}\label{eq:ue-pde-lin}
\begin{dcases}
u_t^\e = \tr \left( A \left( x, t, \frac{x}{\e}, \frac{t}{\e^2} \right) D^2 u^\e  \right) & \text{in }\R^n\times(0,T),\\
u^\e(x,0) = g\left( x, \frac{x}{\e} \right) & \text{on }\R^n.
\end{dcases}
\end{equation}
Our main result is stated as follows. 

\begin{theorem}\label{theorem:higher} Assume that $F:\cS^n\times \R^n\times[0,T]\times\T^n\times\T \ra \R$ and $g:\R^n\times\T^n\ra\R$ verify \eqref{eq:ellip-F} - \eqref{eq:Ck-C2a-g}, and let $\{u^\e\}_{\e>0}$ be the family of the bounded viscosity solutions to \eqref{eq:ue-pde}. Then for each integer $d\geq 0$, there exist sequences $\{\tilde{v}_{d,k}\}_{k=0}^\infty$, $\{\tilde{w}_{d,k}\}_{k=0}^\infty$ of functions on $\R^n\times[0,T]\times\T^n\times[0,\infty)$ and a sequence $\{\tilde{w}_{d,k}^\#\}_{k=0}^\infty$ of functions on $\R^n\times[0,T]\times\T^n\times\T$ such that for any integer $m\geq 2$, any $\e\leq\frac{1}{2}$, any $x\in\R^n$ and $0\leq t\leq T$, 
\begin{equation}\label{eq:higher}
\left| u^\e (x,t) - \sum_{d=0}^{\lfloor \frac{m}{2}\rfloor - 1} \sum_{k=0}^{m-2d} \e^{k + 2d} \left( \tilde{v}_{d,k}\left( x,t,\frac{x}{\e},\frac{t}{\e^2} \right) + \tilde{w}_{d,k}\left( x,t,\frac{x}{\e},\frac{t}{\e^2}\right) \right)  \right| \leq C_m\e^{m-1},
\end{equation}
and in particular for $c_m\e^2 |\log\e | \leq t\leq T$, 
\begin{equation}\label{eq:higher-log}
\left| u^\e (x,t) - \sum_{d=0}^{\lfloor \frac{m}{2} \rfloor - 1} \sum_{k=0}^{m-2d} \e^{k + 2d} \tilde{w}_{d,k}^\# \left( x,t,\frac{x}{\e},\frac{t}{\e^2}\right)  \right| \leq C_m\e^{m-1},
\end{equation}
where $c_m$ and $C_m$ depend only on $n$, $\lambda$, $\Lambda$, $\alpha$, $m$, $T$ and $K$.
\end{theorem}

Let us make a few remarks regarding Theorem \ref{theorem:higher}.

\begin{remark}\label{remark:higher-ini}
In Section \ref{subsection:bootstrap}, we observe that $\tilde{v}_{d,k}$ is of the form 
\begin{equation*}
\tilde{v}_{d,k} (x,t,y,s) = v_{d,k} (x,t,y,s) - \bar{v}_{d,k}(x,t),
\end{equation*}
and satisfies an exponential decay estimate as $s\ra\infty$. We shall call $v_{d,k}$ the $(d,k)$-th initial layer corrector and $\bar{g}_{d,k} = \bar{v}_{d,k}(\cdot,0)$ the $(d,k)$-th effective initial data. Owing to the exponential decay estimate for $\tilde{v}_{d,k}$, we obtain the higher order convergence rate \eqref{eq:higher-log} away from initial time zone $t\lesssim \e^2$. 
\end{remark}

\begin{remark}\label{remark:higher-int}
Moreover, $\tilde{w}_{d,k}$ is of the form 
\begin{equation*}
\tilde{w}_{d,k} (x,t,y,s) = w_{d,k}(x,t,y,s) + \bar{u}_{d,k}(x,t),
\end{equation*}
where $w_{d,k}$ and $\bar{u}_{d,k}$ will be called the interior corrector and respectively the effective limit profile of order $k+2d$. Furthermore, $w_{d,k}$ will be paired with a space-time periodic function $w_{d,k}^\#$ such that 
\begin{equation*}
\tilde{w}_{d,k}^\# (x,t,y,s) = w_{d,k}^\#(x,t,y,s) + \bar{u}_{d,k}(x,t).
\end{equation*}
Here $w_{d,k}^\#$ will play the role of the interior corrector in purely periodic homogenization problems. In particular, we shall observe that $\tilde{w}_{d,k} = \tilde{w}_{d,k}^\#$ when the interior equation is linear, i.e., $F(P,x,t,y,s) = \tr(A(x,t,y,s) P)$ for some matrix-valued mapping $A$.

In addition, $\tilde{w}_{d,k}$ and $\tilde{w}_{d,k}^\#$ for $k=0,1$ will turn out to be constant in the fast variables $(y,s)$, so the interior error estimate \eqref{eq:higher-log} is of the form $u^\e - \bar{u}_{0,0} - \bar{u}_{0,1} - \e^2 \tilde{w}_{0,2} - \cdots$. This indicates that there is no rapid oscillation in the interior (with respect to the microscopic scales) up to order $\e$. 
\end{remark}

\begin{remark}\label{remark:higher-assump}
In Theorem \ref{theorem:higher} and to the rest of this paper, we assume that $F$ is concave in its matrix variable. Such an assumption is made to have $C^{2,\alpha}$ correctors in fast variables $(y,s) = (\e^{-1}x,\e^{-2}t)$, and smooth limit profiles in slow variables $(x,t)$. These are essential, at least in our approach, to establish higher order convergence rates, since it requires accurate error correction at each order of $\e$. 

Let us also address that the effective problem of \eqref{eq:ue-pde} will turn out to be linear in the Hessian variable. However, this does not make the problem easier in the sense that we will still have strong nonlinear coupling effect near the initial time layer when we construct higher order correctors. The reason that our main problem \eqref{eq:ue-pde} has quadratic scaling in the Hessian variable (i.e., $\e^{-2}F(\e^2 P,x,t,y,s)$) is used to derive smooth correctors in slow variables. We shall discuss more on this issue later. 
\end{remark}


\subsection{Historical Background}\label{subsection:background}

Periodic homogenization of \eqref{eq:ue-pde} (or more generally \eqref{eq:ue-pde-nl}) is rigorously justified in \cite{AB} and \cite{M1}; see also the references therein, and \cite{Ish} for first order fully nonlinear equations as well as \cite{ABM} for iterated homogenization. There is a wide range of literature on the rate of convergence regarding the homogenization problems of type \eqref{eq:ue-pde}, provided that the initial data is non-oscillatory; that is, $g$ is independent on its second argument. Recent development can be found, for instance, in \cite{JK}, \cite{M2} and \cite{CM} using continuous dependence estimates, \cite{Ich} based on a different approach, and \cite{L}, \cite{LS} in stationary ergodic settings; see also the references therein for classical results in this regard. 

Higher order convergence rate in the theory of homogenization has been studied in various settings. We refer to \cite{CS}, \cite{KMP} for divergence type elliptic equations, \cite{MS} for perforated domains with mixed boundary conditions, \cite{CE} for Maxwell equations, \cite{ABDW} for wave equations, \cite{HO} for some numerical results, and also the references therein. Recently, the authors proved in \cite{KL1,KL2} higher order convergence rate for non-divergence type elliptic equations as well as viscous Hamilton-Jacobi equations. 

As far as we know, however, there has not yet been any result on (higher order) convergence rate in homogenization of \eqref{eq:ue-pde}. Here we also achieve higher order convergence rate (Proposition \ref{proposition:higher-nosc}) for uniformly parabolic equations with non-oscillatory initial data, that is, 
\begin{equation}\label{eq:ube-pde}
\begin{dcases}
u_t^\e =F \left( D^2 u^\e , x,t,\frac{x}{\e}, \frac{t}{\e^2} \right) & \text{in }\R^n\times(0,T),\\
u^\e(x,0) = g(x) & \text{on }\R^n;
\end{dcases}
\end{equation}
this will turn out to be a simple application of our main theorem, Theorem \ref{theorem:higher}. Moreover, we achieve a quantitative error estimate (Proposition \ref{proposition:opt}) in the following homogenization problem,
\begin{equation}\label{eq:ue-pde-nl}
\begin{dcases}
u_t^\e =F \left( D^2 u^\e , x, t, \frac{x}{\e}, \frac{t}{\e^2}\right) & \text{in }\R^n\times(0,T),\\
u^\e(x,0) = g\left( x, \frac{x}{\e}\right) & \text{on }\R^n.
\end{dcases}
\end{equation}
The estimate depends on a particular structure of the given operator $F$ and the initial data $g$, and our estimate becomes sharp in some cases. 


\subsection{Heuristic Discussion and Main Difficulties}\label{subsection:discussion}

Let us sketch the higher order correction scheme, which is also visualized in Figure \ref{figure:iter}. Suppose that we aim to have higher order convergence rate of order $\e^{m-1}$. Then our iteration scheme will consist of $\lfloor \frac{m}{2} \rfloor$ steps of successive correction near the initial layer and in the interior, where $\lfloor a \rfloor$ is the greatest integer less than or equal to $a$. 

At the $d$-th step, with $0\leq d\leq \lfloor \frac{m}{2} \rfloor$, we are given with a bounded solution $u_{d,m}^\e = u_{d,m}^\e(x,t)$ to a uniformly parabolic Cauchy problem (of type \eqref{eq:udme-pde}) with a rapidly oscillating initial condition; here $u_{0,m}^\e = u^\e$, i.e., the bounded solution of \eqref{eq:ue-pde}. 

First we construct a sequence $\{v_{d,k} = v_{d,k}(x,t,y,s)\}_{k=0}^\infty$ of higher order initial layer correctors such that for each $k$, $v_{d,k}(x,t,y,s)$ is periodic in $y$ and converges exponentially fast to some $\bar{v}_{d,k} = \bar{v}_{d,k}(x,t)$, as $s\ra\infty$. In particular, we will have 
\begin{equation*}
\left| u_{d,m}^\e (x,t) -  \sum_{k=0}^{m-2d} \e^k \left( v_{d,k} \left(x,t,\frac{x}{\e},\frac{t}{\e^2}\right) - \bar{v}_{d,k}(x,t)\right) - \tilde{u}_{d,m}^\e (x,t) \right| = O(\e^{m-1}), 
\end{equation*} 
where $\tilde{u}_{d,m}^\e$ is a bounded solution to another uniformly parabolic Cauchy problem \eqref{eq:utme-pde}. What is important here is that due to the higher order correction near the initial time layer, the initial condition for $\tilde{u}_{d,m}^\e$ is no longer rapidly oscillating as $\e\ra 0$; more precisely, we shall have
\begin{equation*}
\tilde{u}_{d,m}^\e (x,0) = \sum_{k=0}^{m-2d} \e^k \bar{g}_{d,k} (x). 
\end{equation*}

On the other hand, the interior equation for $\tilde{u}_{d,m}^\e$ will be rapidly oscillating. For this reason, we construct a sequence $\{w_{d,k} = w_{d,k}(x,t,y,s)\}_{k=0}^\infty$ of higher order interior correctors and a sequence $\{\bar{u}_{d,k} = \bar{u}_{d,k}(x,t)\}_{k=0}^\infty$ of higher order effective limit profiles such that
\begin{equation*}
\left| \tilde{u}_{d,m}^\e(x,t) - \sum_{k=0}^{m-2d} \e^k \left( w_{d,k} \left(x,t,\frac{x}{\e},\frac{t}{\e^2}\right) + \bar{u}_{d,k} (x,t)\right) - \e^2 u_{d+1,m}^\e(x,t)\right| = O(\e^{m-1}).
\end{equation*} 
Here $u_{d+1,m}^\e$ is a bounded solution to yet another uniformly parabolic Cauchy problem (of type \eqref{eq:udme-pde}) subject to a rapidly oscillating initial condition. More specifically, we shall observe that 
\begin{equation*}
u_{d+1,m}^\e (x,0) = - \sum_{k=0}^{m-2d-2} \e^k w_{d,k+2} \left(x,0,\frac{x}{\e},0\right).
\end{equation*} 
The reason why the summation above does not involve $w_{d,0}$ and $w_{d,1}$ is that both terms turn out to be identically zero. The reason that we have $w_{d,0} = w_{d,1} = 0$ is due to the non-divergence structure of the Cauchy problem that $\tilde{u}_{d,m}^\e$ (as well as $u_{d,m}^\e$) solves. 

We shall observe that the Cauchy problem for $u_{d+1,m}^\e$ shares the same structure conditions with that of $u_{d,m}^\e$, in the sense that the interior equations are uniformly parabolic, smooth and concave in the Hessian variable, and that the initial conditions are rapidly oscillating and periodic in the fast variable. For this reason, we can iterate this correcting scheme, for $\lfloor \frac{m}{2} \rfloor$-times, and obtain an expansion of $u^\e = u_{0,m}^\e$ in terms of the higher order initial layer and interior correctors, up to an $O(\e^{m-1}$)-error. 

This sketches the higher order correction scheme. Let us stress that such an iterative structure is present in both linear and nonlinear problems. Let us also repeat that the improvement of $O(\e^2)$-error after each round of the higher order correction is due to the non-divergence structure of our Cauchy problem \eqref{eq:ue-pde}. That being said, we expect to have $O(\e)$-error improvement for divergence-type Cauchy problems, but we shall not discuss it further in this paper. 

In what follows, let us discuss the difficulties arising from the nonlinearity of the governing operator $F$. Let us denote by $F_{d,m}^\e = F_{d,m}^\e(P,x,t,y,s)$ and $\tilde{F}_{d,m}^\e = \tilde{F}_{d,m}^\e (P,x,t,y,s)$ the governing operators associated with the Cauchy problems for $u_{d,m}^\e$ and respectively $\tilde{u}_{d,m}^\e$. If $F$ is a linear operator so that $F(P,x,t,y,s) = \tr(A(x,t,y,s)P)$, then we shall see that 
\begin{equation*}
F_{d,m}^\e (P,x,t,y,s) = \tilde{F}_{d,m}^\e (P,x,t,y,s) = \tr(A(x,t,y,s)P) = F(P,x,t,y,s).
\end{equation*}
In other words, the interior equation remains the same in each step of the higher order correction for linear Cauchy problems \eqref{eq:ue-pde-lin}. 

In contrast, if $F$ is a fully nonlinear operator, we will have $F_{d,m}^\e$ and $\tilde{F}_{d,m}^\e$ all different for each $d$-th step. In particular, $\tilde{F}_{d,m}^\e$ will be given by  
\begin{equation*}
\tilde{F}_{d,m}^\e (P,x,t,y,s) = \e^{-2d-2}\left(  F \left( \e^{2d+2} P + Y_{d,m}^\e ,x,t,y,s \right) -  F \left( Y_{d,m}^\e,x,t,y,s\right) \right),
\end{equation*}
where $Y_{d,m}^\e = Y_{d,m}^\e(x,t,y,s)$ represents the errors accumulated from the correction scheme over all the previous steps, including the error left from the initial layer correction at the $d$-th step.

It is noteworthy that $Y_{d,m}^\e(x,t,y,s)$ will turn out to be periodic in the space variable $y$ only, but not in the time variabler $s$. Instead, we shall discover that $Y_{d,m}^\e(x,t,y,s)$ converges exponentially fast to a space-time periodic term, as $s\ra\infty$. This indicates that $\tilde{F}_{d,m}^\e (P,x,t,y,s)$ is also not periodic in $s$, but admits a space-time periodic version. Recalling that the functional $F = F(P,x,t,y,s)$ was initially assumed to be periodic in both $y$ and $s$, we observe a change of the oscillating pattern of the operators appearing in the higher order correction scheme. We shall call such a change the nonlinear coupling effect, since it is observed only in nonlinear problems and it is produced by the correlation between the initial layer and interior correction. Let us remark that such a coupling effect has not been observed in any literature concerning convergence rate in the homogenization theory, including the authors' previous works \cite{KL1,KL2}. 

Finally, let us conclude the discussion with a remark on the homogenization problem \eqref{eq:ue-pde-nl}. The key difference in the homogenization process between \eqref{eq:ue-pde} and \eqref{eq:ue-pde-nl} is that the initial layer corrector of the latter problem may not be differentiable in the slow variables in general (see Example \ref{example:counter} for a counterexample), while the former produces smooth initial layer correctors. For this reason, the best one can hope for is convergence rate of order $\e$. We remark that $O(\e)$-estimates are obtained here for certain class of uniformly parabolic operators $F$ and initial datum $g$. We believe that the higher order convergence rate for \eqref{eq:ue-pde-nl} will be available under some additional conditions on $F$ and $g$. Classifying such $F$ and $g$ will require a thorough analysis on the limiting behavior of $\e^2 F(\e^{-2} P,x,t,y,s)$ as $\e\ra 0$, and we hope to come back to this problem in our future work. 


\subsection{Outline}\label{subsection:outline}

This paper is organized as follows. In Section \ref{section:assump}, we introduce notation and standing assumptions of our main result. In Section \ref{section:slow}, we establish the regularity theory for correctors, and in Section \ref{section:higher}, we construct higher order initial layer and interior correctors. Our main result, Theorem \ref{theorem:higher}, is proved in Section \ref{subsection:bootstrap}. Section \ref{section:further} is devoted to the higher order convergence rate in homogenization of \eqref{eq:ube-pde}, and the convergence rate in homogenization of \eqref{eq:ue-pde-nl}.


\section{Notation and Standing Assumptions}\label{section:assump}

Basic notations are summarized as follows.

\begin{itemize}
\item $n$, $\lambda$, $\Lambda$, $\alpha$, $K$, $T$: fixed parameters, with $n\geq 1$, $0<\lambda\leq\Lambda$, $0<\alpha\leq 1$, $K,T>0$.
\item $\lfloor a \rfloor$: largest integer smaller or equal to $a$. 
\item $\cS^n$:  space of all real symmetric matrices of order $n$, endowed with $(L^2,L^2)$-norm and $\{E_{ij}:1\leq i,j\leq n\}$  the set of standard basis matrices. 
\item $\T^n$: $n$-dimensional unit torus, $\R^n/\Z^n$; $\T = \T^1$. 
\item $x$, $t$: slow spatial, temporal variable; $x\in\R^n$, $t\in[0,T]$.
\item $y$, $s$: fast spatial, temporal variable; $y\in\T^n$, $s\in \T$ or $[0,\infty)$. 
\item $D_p^k F$: $k$-th order derivative of $F$ on $\cS^n$, and 
\begin{equation*}
D_p^k F(P) (Q_1,\cdots,Q_k) = \frac{\p^k F}{\p p_{i_1j_1} \cdots \p p_{i_kj_k}}(P) q_{i_1j_1}^1 \cdots q_{i_kj_k}^k,
\end{equation*}
where we use the summation convention for repeated indices. 
\item $C^{k,\alpha}(X)$: usual H\"older space on $X$. 
\item $E^{k,\alpha}(\T^n\times[0,\infty);\beta)$: space of all $f\in C^{k,\alpha}(\T^n\times[0,\infty))$ satisfying 
\begin{equation*}
\norm{f}_{E^{k,\alpha}(\T^n\times[0,\infty);\beta)} = \norm{f}_{C^{k,\alpha}(\T^n\times[0,\infty))} + \sup_{s>0}\left( e^{\beta s} \norm{f(\cdot,s)}_{C^{k,\alpha}(\T^n)} \right) < \infty. 
\end{equation*} 
\item $C^m(X;C^{k,\alpha}(Y))$: space of all $f : X\times Y \ra Z$ satisfying 
\begin{equation*}
\sup_{0\leq l\leq m}\sup_{\xi\in X}\norm{D_\xi^l f(\xi,\cdot)}_{C^{k,\alpha}(Y)}  < \infty.
\end{equation*} 
Here we can have $m = \infty$; possible candidates for $X = \R^n$ or $\R^n\times[0,T]$, $Y = \T^n$,  $\T^n\times\T$ or $\T^n\times[0,\infty)$, and $Z = \R$ or $\cS^n$. 
\item $C^m(X; E^{k,\alpha}(\T^n\times[0,\infty);\beta))$: defined similarly as $C^m(X; C^{k,\alpha}(Y))$ with $C^{k,\alpha}(Y)$ replaced by $E^{k,\alpha}(\T^n\times[0,\infty);\beta)$. 
\item $S(m,\bar\alpha;d,k)$: class of all $f\in C^\infty(X;C^{m,\bar\alpha}(Y))$ satisfying 
\begin{equation*}
\norm{ D_\xi^l f(\xi,\cdot)}_{C^{m,\bar\alpha}(Y)} \leq C_{m,\bar\alpha,d,k,l},\quad\text{for each }l\geq 0,
\end{equation*}
where $C_{m,\bar\alpha,d,k,l}>0$ depends only on $n$, $\lambda$, $\Lambda$, $\alpha$, $K$, $T$, $m$, $\bar\alpha$, $d$, $k$ and $l$; $S(m,\bar\alpha;k) = S(m,\bar\alpha;0,k)$, $S(d,k) = S(0,0;d,k)$, $S(k) = S(0,0;0,k)$. 
\item $E(m,\bar\alpha;d,k)$: class of all $f\in C^\infty(X; C^{m,\bar\alpha}(\T^n\times[0,\infty))) \cap  S(m,\bar\alpha;d,k)$ such that
\begin{equation*}
\norm{D_\xi^l f(\xi,\cdot)}_{E^{m,\bar\alpha}(\T^n\times[0,\infty);\beta_{d,k,l})} \leq C_{m,\bar\alpha,d,k,l},\quad\text{for each }l\geq 0, 
\end{equation*}
where $C_{m,\bar\alpha,d,k,l}>0$ depends on the same set of parameters as above, and $\beta_{d,k,l}$ depends only on $n$, $\lambda$, $\Lambda$, $d$, $k$ and $l$. 
\end{itemize}

We will use the parabolic terminology, such as $|(x,t)| = (|x|^2 + |t|)^{1/2}$. Readers may refer to \cite{W1}, \cite{W2}, \cite{W3} and \cite{CIL} for the existence and regularity theory for viscosity solutions of fully nonlinear parabolic equations. 

Standing assumptions on the interior operator $F:\cS^n\times\R^n\times[0,T]\times\T^n\times\T\ra \R$ and the initial data $g:\R^n\times\T^n\ra\R$ are given as follows. 
\begin{itemize}
\item (Uniformly Ellipticity) $F$ is uniformly elliptic on $\cS^n$:   
\begin{equation}\label{eq:ellip-F}
\lambda|Q| \leq F(P+Q,x,t,y,s) - F(P,x,t,y,s) \leq \Lambda|Q|,\quad\text{for any } P,Q\in\cS^n,Q\geq 0,
\end{equation}
\item (Concavity) $F$ is concave on $\cS^n$:
\begin{equation}\label{eq:concave-F}
\frac{1}{2} F(P,x,t,y,s) + \frac{1}{2} F (Q,x,t,y,s) \leq F\left(\frac{P+Q}{2},x,t,y,s\right),\quad \text{for any }P,Q\in\cS^n. 
\end{equation}
\item (Regularity) $F \in C^\infty(\cS^n\times\R^n\times[0,T];C^\alpha(\T^n\times\T))$, $g\in C^\infty(\R^n ; C^{2,\alpha}(\T^n))$ with
\begin{equation}\label{eq:Ck-Ca-F}
\norm{ D_p^k D_x^m \p_t^l F(P,x,t,\cdot,\cdot) }_{C^\alpha(\T^n\times\T)} \leq K|P|^{(1-k)_+},\quad \text{for any }k,m,l\geq 0,
\end{equation}
and 
\begin{equation}\label{eq:Ck-C2a-g}
\norm{ D_x^k g(x,\cdot) }_{C^{2,\alpha}(\T^n)}  \leq K,\quad\text{for any }k\geq 0.
\end{equation}
\end{itemize}
Note that the periodicity of $F$ and $g$ is implied in their domains. 


\section{Regularity Theory in Slow Variables}\label{section:slow}

Let us establish the regularity theory in slow variables, $(x,t)$, regarding viscosity solutions to uniformly parabolic problems. 

\subsection{Spatially Periodic Cauchy Problem}\label{subsection:decay}

Let $f  \in C^\infty(\R^n\times[0,T];E^\alpha(\T^n\times[0,\infty);\beta))$, with some $\beta>0$, such that 
\begin{equation}\label{eq:decay-Ck-Ca-f}
\begin{split}
\norm{D_x^k \p_t^l f(x,t,\cdot,\cdot)}_{E^\alpha(\T^n\times[0,\infty);\beta)} \leq K,\quad\text{for any }k, l\geq 0,
\end{split}
\end{equation}
with $\alpha$, $K$ as in \eqref{eq:Ck-Ca-F}. 

For each $(x,t)\in\R^n\times[0,T]$, let us consider the following spatially periodic and uniformly parabolic Cauchy problem,
\begin{equation}\label{eq:v-pde}
\begin{cases}
v_s = F(D_y^2 v,x,t,y,s) + f(x,t,y,s)& \text{in }\T^n\times(0,\infty),\\
v(x,t,y,0) = g(x,y) & \text{on }\T^n.
\end{cases}
\end{equation}
By the standard existence theory \cite{CIL}, we know that there exists a unique viscosity solution $v(x,t,\cdot,\cdot)\in BUC(\T^n\times[0,\infty))$ to \eqref{eq:v-pde}. Due to the periodicity of $F$, $f$ and $g$. 

We shall begin with an easy observation that the spatial oscillation of $v(x,t,y,s)$ in $y$ decays exponentially fast as $s\ra\infty$. The exponential rate will turn out to be independent of $(x,t)$. 

\begin{lemma}\label{lemma:decay} For each $(x,t)\in\R^n\times[0,T]$, there exists a unique $\gamma\in\R$ such that 
\begin{equation}\label{eq:decay}
e^{\beta_0 s}| v(x,t,y,s) - \gamma | \leq C,
\end{equation}
for any $(x,t,y,s)\in\R^n\times[0,T]\times\T^n\times[0,\infty)$, where $0<\beta_0<\beta$ depends only on $n$, $\lambda$, $\Lambda$ and $\beta$, and $C>0$ depend only on $n$, $\lambda$, $\Lambda$, $\beta$, $\beta_0$ and $K$. 
\end{lemma}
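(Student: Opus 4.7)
Fix $(x,t)\in\R^n\times[0,T]$, and set $\cM_+(s):=\max_{y\in\R^n}v(x,t,y,s)$, $\cM_-(s):=\min_{y\in\R^n}v(x,t,y,s)$, $\omega(s):=\cM_+(s)-\cM_-(s)$; these are well-defined by the periodicity \eqref{eq:v-peri} and continuity. The plan is to prove a uniform exponential decay $\omega(s)\leq Ce^{-\beta_0 s}$ with rate depending only on $n,\lambda,\Lambda,\beta$, and then to extract $\gamma=\gamma(x,t)$ as the common limit of $\cM_\pm(s)$. Uniqueness of $\gamma$ is automatic, since any two candidates verifying \eqref{eq:decay} must agree within $2Ce^{-\beta_0 s}$ for every $s\geq 0$.

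Thanks to \eqref{eq:0-F}, the constants $\cM_+(s_0)+\int_{s_0}^s\norm{f(x,t,\cdot,\tau)}_{L^\infty(\R^n)}d\tau$ and $\cM_-(s_0)-\int_{s_0}^s\norm{f(x,t,\cdot,\tau)}_{L^\infty(\R^n)}d\tau$ are a super- and subsolution of \eqref{eq:v-pde} from $s=s_0$, so comparison gives the a priori trap
\begin{equation}\label{eq:plan-trap}
\cM_-(s_0)-Ce^{-\beta s_0}\leq v(x,t,y,s)\leq \cM_+(s_0)+Ce^{-\beta s_0},\qquad s\in[s_0,s_0+1],
\end{equation}
with $C=C(\beta,K)$. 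The heart of the matter will be the per-unit-time oscillation reduction
\begin{equation}\label{eq:plan-redux}
\omega(s_0+1)\leq \theta\,\omega(s_0)+Ce^{-\beta s_0},\qquad s_0\geq 0,
\end{equation}
for some $\theta\in(0,1)$ and $C$ depending only on $n,\lambda,\Lambda,\beta,K$.

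To establish \eqref{eq:plan-redux}, I consider
\begin{equation}
\tilde W_1(y,s):=v(x,t,y,s)-\cM_-(s_0)+Ce^{-\beta s_0},\qquad \tilde W_2(y,s):=\cM_+(s_0)-v(x,t,y,s)+Ce^{-\beta s_0}
\end{equation}
on $\R^n\times[s_0,s_0+1]$, both nonnegative by \eqref{eq:plan-trap} and satisfying $\tilde W_1+\tilde W_2\equiv\omega(s_0)+2Ce^{-\beta s_0}$ pointwise. Uniform ellipticity \eqref{eq:ellip-F} together with \eqref{eq:0-F} gives $\cP^-(P)\leq F(P,\cdot)\leq\cP^+(P)$, which makes $\tilde W_1$ a nonnegative viscosity supersolution of $u_s-\cP^-(D_y^2 u)\geq -Ke^{-\beta s_0}$; the analogous statement for $\tilde W_2$ follows after rewriting its equation through $\tilde F(P,\cdot):=-F(-P,\cdot)$, which is also uniformly elliptic with $\tilde F(0,\cdot)=0$. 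Since $\tilde W_1+\tilde W_2$ is a positive constant, $\max(\tilde W_1,\tilde W_2)\geq\omega(s_0)/2$ everywhere, and so at least one $\tilde W_i$ satisfies $|\{\tilde W_i\geq\omega(s_0)/2\}\cap Q|\geq \tfrac12|Q|$ for $Q:=(0,1)^n\times(s_0,s_0+1/2)$. The parabolic weak Harnack inequality for nonnegative viscosity supersolutions of Pucci extremal equations, applied on a ball large enough to contain a fundamental cell and chained via periodicity, then yields $\inf_{(0,1)^n\times(s_0+3/4,s_0+1)}\tilde W_i\geq c\,\omega(s_0)-Ce^{-\beta s_0}$, which unwinds to \eqref{eq:plan-redux}.

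Iterating \eqref{eq:plan-redux} at integer times produces $\omega(k)\leq C(\theta^k+e^{-\beta k})$, so $\omega(s)\leq Ce^{-\beta_0 s}$ for every $s\geq 0$ and every $\beta_0\in(0,\min\{\log(1/\theta),\beta\})$ (non-integer $s$ being handled via \eqref{eq:plan-trap}); since $\log(1/\theta)$ depends only on $n,\lambda,\Lambda$, one may enforce $\beta_0<\beta$. To construct $\gamma$, I will observe that $s\mapsto\cM_+(s)+\int_s^\infty\norm{f(x,t,\cdot,\tau)}_{L^\infty(\R^n)}d\tau$ is nonincreasing and bounded while $s\mapsto\cM_-(s)-\int_s^\infty\norm{f(x,t,\cdot,\tau)}_{L^\infty(\R^n)}d\tau$ is nondecreasing and bounded, so both converge, and since their difference $\omega(s)+2\int_s^\infty\norm{f(x,t,\cdot,\tau)}_{L^\infty(\R^n)}d\tau$ tends to $0$, they share a common limit $\gamma(x,t)$; the bound \eqref{eq:decay} then follows from a triangle inequality using $\beta_0<\beta$. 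The main obstacle is \eqref{eq:plan-redux}: one must verify that the Harnack constants are \emph{independent} of the slow variables $(x,t)$, which is precisely what uniform ellipticity \eqref{eq:ellip-F} and the normalization \eqref{eq:0-F} guarantee, while the exponentially decaying forcing enters the estimate through $\norm{f(x,t,\cdot,s)}_{L^\infty(\R^n)}\leq Ke^{-\beta s}$ afforded by \eqref{eq:decay-Ck-Ca-f}.
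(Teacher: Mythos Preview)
Your proof is correct and follows essentially the same strategy as the paper's: establish an a priori trap via comparison with constants, derive a per-unit-time oscillation reduction via Harnack, iterate, and identify $\gamma$ as the common limit of the spatial maximum and minimum. The only variation is in the oscillation-reduction step: the paper applies the full Harnack inequality separately to $v-\cM_-(s_0)$ and $\cM_+(s_0)-v$ (after adding the $Ce^{-\beta s_0}$ correction) and then adds the two resulting inequalities, whereas you combine a pigeonhole/measure argument with the weak Harnack inequality---both are standard routes to the same contraction estimate $\omega(s_0+1)\leq\theta\,\omega(s_0)+Ce^{-\beta s_0}$.
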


\begin{proof} We shall identify $\T^n$ by $[0,1]^n$ in $\R^n$, and extend $v(x,t,\cdot,\cdot)$ as a function on $\R^n\times[0,\infty)$ through its periodicity. Since $(x,t)$ will be fixed throughout the proof, let us write $v=v(y,s)$, $F=F(M,y,s)$, $f=f(y,s)$ and $g=g(y)$ for notational convenience. By $S(s)$, $I(s)$ and $O(s)$ let us denote $\sup_{\R^n} v(\cdot,s)$, $\inf_{\R^n} v(\cdot,s)$ and respectively $\osc_{\R^n} v(\cdot,s)$. Also write $Y = (0,1)^n$ and $2Y=(0,2)^n$. By the spatial periodicity of $v$, we have $S(s) = \sup_{2Y} v(\cdot,s) = \sup_Y v(\cdot,s)$, and similar identities for $I(s)$ and $O(s)$ as well.

Fix $s_0\geq 0$. Then 
\begin{equation}
\p_s \left( S(s_0) + \frac{K}{\beta} (e^{-\beta s_0} -e^{-\beta s})\right) = K e^{-\beta s} \geq f(y,s),
\end{equation}
for any $y\in\R^n$ and $s\geq s_0$, due to \eqref{eq:decay-Ck-Ca-f}. Since we have $F(0,y,s) = 0$ for all $(y,s)$, we deduce that $S(s_0) + \frac{K}{\beta} (e^{-\beta s_0} - e^{-\beta s})$ is a supersolution to \eqref{eq:v-pde} in $\R^n\times[s_0,\infty)$. Similarly, one can observe that $I(s_0) - \frac{K}{\beta} (e^{-\beta s_0} - e^{-\beta s})$ is a subsolution to \eqref{eq:v-pde} in $\R^n\times[s_0,\infty)$. Thus, by the comparison principle \cite{CIL} for viscosity solutions, we deduce that 
\begin{equation}\label{eq:M-m}
I(s_0) - \frac{K}{\beta}(e^{-\beta s_0} - e^{-\beta s}) \leq v(y,s) \leq S(s_0) + \frac{K}{\beta} (e^{-\beta s_0}- e^{-\beta s}), 
\end{equation}
for any $y\in\R^n$ and $s\geq s_0$. 

Now for each nonnegative integer $k$, let us define 
\begin{equation}
v_k(y,s) = v(y,s+k) - I(k) + \frac{K}{\beta} e^{-\beta k},\quad y\in\R^n,s\geq 0.
\end{equation}
From \eqref{eq:M-m} with $s$ and $s_0$ replaced by $s+k$ and $k$ respectively, we deduce that 
\begin{equation}
v_k(y,s) \geq 0,\quad y\in\R^n,s\geq 0.
\end{equation}
On the other hand, we see that $v_k$ is a (spatially periodic) viscosity solution to 
\begin{equation}
\p_s v_k = F(D_y^2 v_k, y,s+k) + f(y,s+k) \quad\text{in } 2Y\times(0,1).
\end{equation}
Therefore, we may apply the Harnack inequality in $\bar{Y}\times [\frac{1}{2},1]$ and deduce from the spatial periodicity of $v_k$ that
\begin{equation}
S\left(k+\frac{1}{2}\right) - I(k) + \frac{K}{\beta} e^{-\beta k} \leq c_1\left(I(k+1) - I(k) + \frac{K}{\beta} e^{-\beta k}\right),
\end{equation}
where $c_1$ depends only on $n$, $\lambda$ and $\Lambda$. Utilizing \eqref{eq:M-m} with $s_0= k+\frac{1}{2}$ and $s=k+1$, we obtain that $S(k+1) \leq S(k+\frac{1}{2}) + \frac{K}{\beta} e^{-\beta k}$. Combining these two inequalities, we arrive at
\begin{equation}\label{eq:Mm}
S(k+1) - I(k) \leq c_1 \left(I(k+1) - I(k) + \frac{K}{\beta} e^{-\beta k}\right).
\end{equation}

Now we define 
\begin{equation}
w_k(y,s) = S(k) + \frac{K}{\beta} e^{-\beta k} - v(y,s+k),\quad y\in\R^n,s\geq 0.
\end{equation}
Then by \eqref{eq:M-m} and \eqref{eq:v-pde}, $w_k$ is a spatially periodic nonnegative viscosity solution to 
\begin{equation}
\p_s w_k = - F( - D_y^2 w_k, y, s+k) - f(y,s+k) \quad \text{in }2Y\times(0,1).
\end{equation}
Notice that the operator $-F(-M,y,s)$ satisfies the same ellipticity condition \eqref{eq:ellip-F}. Hence, we may invoke a similar argument as above and prove that 
\begin{equation}\label{eq:mM}
S(k) - I(k+1) \leq c_1 \left(S(k) - S(k+1) + \frac{K}{\beta} e^{-\beta k}\right).
\end{equation}
Notice that the constant $c_1$ here is the same as that in \eqref{eq:Mm}. 

By \eqref{eq:Mm} and \eqref{eq:mM}, we have
\begin{equation}\label{eq:O}
O(k+1) \leq \frac{c_1-1}{c_1+1} O(k) + \frac{2c_1K}{(c_1+1)\nu} e^{-\beta k}. 
\end{equation}
Iterating \eqref{eq:O} with respect to $k$ and using $O(0) = \osc_{\R^n} g \leq 2K$, we arrive at 
\begin{equation}\label{eq:osc-decay}
e^{\beta_0 s} O(s) \leq c_2K\quad\text{with }0<\beta_0 < \min\left(\beta,\log\frac{c_1+1}{c_1-1}\right),
\end{equation}
where $c_2>0$ is another constant depending only on $n$, $\lambda$, $\Lambda$, $\beta$ and $\beta_0$.

The estimate \eqref{eq:osc-decay} implies that $O(s) \ra 0$ as $s\ra\infty$. On the other hand, we know from \eqref{eq:M-m} that both $S(s)$ and $I(s)$ converge as $s\ra\infty$. Combining these two observations, we deduce that $S(s)$ and $I(s)$ converge to the same limit, which we shall denote by $\gamma$. Then \eqref{eq:decay} follows immediately from \eqref{eq:osc-decay}. 
\end{proof}

\begin{remark}\label{remark:decay} The proof of Lemma \ref{lemma:decay} does not involve the periodicity of $F$ in $s$.
\end{remark}

By Lemma \ref{lemma:decay}, we are able to define $\bar{v}:\R^n\times[0,T]\ra\R$ by 
\begin{equation}\label{eq:vb}
\bar{v}(x,t) = \lim_{s\ra\infty} v(x,t,0,s).
\end{equation}
The limit value in the right hand side of \eqref{eq:vb} is precisely the unique constant $\gamma$ in the statement of Lemma \ref{lemma:decay}. With $\bar{v}$ at hand, \eqref{eq:decay} reads 
\begin{equation}\label{eq:decay-re}
e^{\beta_0 s} | v(x,t,y,s) - \bar{v}(x,t) | \leq C,
\end{equation}
for any $(x,t,y,s)\in\R^n\times[0,T]\times\T^n\times[0,\infty)$. 

One may notice that the proof of Lemma \ref{lemma:decay} does not involve the assumptions on the concavity of $F$ in $P$, the $C^\alpha$ regularity of $F$ and $f$ in $(y,s)$ and the $C^{2,\alpha}$ regularity of $g$ in $y$. Assuming these conditions additionally, we are allowed to use the interior and boundary $C^{2,\bar\alpha}$ estimates (the so-called Schauder theory) for viscosity solutions (with some $0<\bar\alpha\leq\alpha$). As a result, we improve the estimate \eqref{eq:decay-re} in terms of $C^{2,\bar\alpha}$ norm.

\begin{lemma}\label{lemma:decay-Linf-C2a} There exists $0<\bar\alpha<\alpha$, depending only on $n$, $\lambda$, $\Lambda$ and $\alpha$, such that $\bar{v} \in L^\infty(\R^n\times[0,T])$ and $v\in L^\infty(\R^n\times[0,T]; C^{2,\bar\alpha}(\T^n\times[0,\infty)))$ with 
\begin{equation}\label{eq:decay-Linf-C2a}
| \bar{v}(x,t)| + \norm{ v (x,t,\cdot,\cdot) - \bar{v}(x,t)}_{E^{2,\bar\alpha}(\T^n\times[0,\infty);\beta_0)} \leq C,
\end{equation}
for any $(x,t,s)\in\R^n\times[0,T]$, where $C>0$ depends only on $n$, $\lambda$, $\Lambda$, $\beta$, $\beta_0$ and $K$.  
\end{lemma}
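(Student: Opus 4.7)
The plan is to combine the $L^\infty$ decay estimate \eqref{eq:decay-re} from Lemma \ref{lemma:decay} with parabolic Schauder theory for concave uniformly parabolic equations, applied separately on a bounded initial regime and on unit-length shifted time cylinders at large $s$. The key observation is that once the concavity of $F$ in $P$, the $C^\alpha$ dependence of $F$ and $f$ on $(y,s)$, and the $C^{2,\alpha}$ regularity of $g$ in $y$ are all available, the interior/boundary Evans--Krylov--Schauder estimates of \cite{W2,W3} provide a $C^{2,\bar\alpha}$ upgrade of any $L^\infty$ bound, which will preserve the exponential decay rate.

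First I would verify $\bar{v}\in L^\infty(\R^n\times[0,T])$. Applying \eqref{eq:M-m} with $s_0=0$ together with $\|g(x,\cdot)\|_{L^\infty}\leq K$ gives a uniform $L^\infty$ bound on $v$, and letting $s\to\infty$ in \eqref{eq:vb} transfers the bound to $\bar{v}$. Next, I would establish the global $C^{2,\bar\alpha}$ bound: for each fixed $(x,t)$, the map $(y,s)\mapsto v(x,t,y,s)$ solves a concave uniformly parabolic Cauchy problem whose operator is $C^\alpha$ in $(y,s)$ by \eqref{eq:Ck-Ca-F}, whose source is $C^\alpha$ by \eqref{eq:decay-Ck-Ca-f}, and whose initial datum is globally $C^{2,\alpha}$ by \eqref{eq:Ck-C2a-g}, with all the structural constants independent of $(x,t)$. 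By the boundary Schauder estimate up to the flat hyperplane $\{s=0\}$ for concave fully nonlinear equations (available from \cite{W2,W3}), there exists $0<\bar\alpha<\alpha$ depending only on $n,\lambda,\Lambda,\alpha$ such that $\|v(x,t,\cdot,\cdot)\|_{C^{2,\bar\alpha}(\R^n\times[0,\infty))}\leq C$, where the global spatial control comes for free from the periodicity \eqref{eq:v-peri}.

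For the decay part of \eqref{eq:decay-Linf-C2a}, fix $s\geq 1$ and shift: put $w(y,\sigma):=v(x,t,y,s+\sigma)-\bar{v}(x,t)$ for $\sigma\in[-\tfrac{1}{2},0]$. Since $\bar{v}(x,t)$ is constant in $(y,\sigma)$ and \eqref{eq:0-F} holds, $w$ satisfies
\[
\p_\sigma w = F(D_y^2 w, x,t,y,s+\sigma) + f(x,t,y,s+\sigma) \quad\text{in }\R^n\times[-\tfrac{1}{2},0].
\]
By \eqref{eq:decay-re} we have $\|w\|_{L^\infty}\leq C e^{-\beta_0 s}$, while \eqref{eq:decay-Ck-Ca-f} yields $\|f(x,t,\cdot,s+\sigma)\|_{C^\alpha(\R^n)}\leq K e^{-\beta(s-1/2)}\leq C e^{-\beta_0 s}$ since $\beta_0<\beta$. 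The interior $C^{2,\bar\alpha}$ Schauder estimate applied at $\sigma=0$ then gives
\[
\|w(\cdot,0)\|_{C^{2,\bar\alpha}(\R^n)}\leq C\bigl(\|w\|_{L^\infty(\R^n\times[-1/2,0])}+\|f\|_{C^\alpha}\bigr)\leq C e^{-\beta_0 s},
\]
which is exactly the decay factor in \eqref{eq:decay-Linf-C2a}. For $s\in[0,1]$, the factor $e^{\beta_0 s}\leq e^{\beta_0}$ is harmless and the second term in \eqref{eq:decay-Linf-C2a} is absorbed into the global bound established above.

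The main potential obstacle is simply bookkeeping: checking that the boundary Schauder estimate near $\{s=0\}$ for concave fully nonlinear parabolic equations is available in the exact form needed (global in $y$, with constants depending only on $n,\lambda,\Lambda,\alpha$ and the $C^\alpha$/$C^{2,\alpha}$ norms), and verifying that the shifted operator on each unit cylinder at time $s$ still satisfies the same structural hypotheses uniformly in $s$. Both are routine consequences of the $(x,t,s)$-uniform assumptions in \eqref{eq:Ck-Ca-F} and \eqref{eq:decay-Ck-Ca-f}; no compatibility condition at the corner is needed since there is no spatial boundary thanks to periodicity.
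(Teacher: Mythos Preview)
Your proposal is correct and follows essentially the same route as the paper's own proof: both combine the $L^\infty$ decay \eqref{eq:decay-re} with the boundary $C^{2,\bar\alpha}$ Schauder estimate near $\{s=0\}$ and the interior $C^{2,\bar\alpha}$ estimate on unit-length time cylinders shifted to large $s$, using periodicity in $y$ to globalize. The only cosmetic difference is that the paper shifts by integers $k$ and covers $\bar Y\times[s_0,s_0+1]$, whereas you shift continuously by $s$ and work on $\R^n\times[-\tfrac12,0]$; the content is identical.
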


\begin{proof} Let us fix $(x,t)\in\R^n\times[0,T]$ and simply write $F(P,y,s)$,  $f(y,s)$, $g(y)$, $v(y,s)$, and $\gamma$ for $F(P,x,t,y,s)$, $f(x,t,y,s)$, $g(x,y)$, $v(x,t,y,s)$ and, respectively, $\bar{v}(x,t)$. Let us extend $v$ to a function on $\R^n\times[0,\infty)$ as in the proof of Lemma \ref{lemma:decay}. Also denote by $Y$ and $2Y$ the cubes $(0,1)^n$ and respectively $(0,2)^n$. 

In view of \eqref{eq:v-pde}, the function $\tilde{v}(y,s) = v(y,s) - \gamma$ is a viscosity solution to
\begin{equation}\label{eq:vt-pde}
\begin{cases}
\tilde{v}_s = F( D_y^2 \tilde{v}, y,s) + f(y,s) &\text{in } 2Y\times (0,\infty),\\
\tilde{v}(y,0) = g(y) - \gamma & \text{on } 2Y.
\end{cases}
\end{equation}
Since $F$ is uniformly elliptic and concave in $P$, and since $F$ and $f$ are $C^\alpha$ while $g$ is $C^{2,\alpha}$ in $(y,s)$, we may apply the boundary $C^{2,\bar\alpha}$ estimate \cite{W2} to \eqref{eq:vt-pde} for some $0<\bar\alpha\leq \alpha$, depending only on $n$, $\lambda$, $\Lambda$ and $\alpha$. This yields that $\tilde{v} \in C^{2,\bar\alpha} (\bar{Y}\times[0,s_0])$ with 
\begin{equation}
\begin{split}
\norm{ \tilde{v} }_{C^{2,\bar\alpha}(\bar{Y}\times[0,s_0])} & \leq c_1\left( \norm{\tilde{v}}_{L^\infty(2Y\times [0,1))} + \norm{f}_{C^\alpha(2Y\times [0,1))} + \norm{g}_{C^{2,\alpha}(2Y)}\right),
\end{split}
\end{equation}
where $0<s_0\leq \frac{1}{2}$ and $c_1>0$ depend only on $n$, $\lambda$, $\Lambda$, $\alpha$ and $K$. Utilizing \eqref{eq:decay}, \eqref{eq:decay-Ck-Ca-f} and \eqref{eq:Ck-C2a-g} (with $m=0$), we derive that
\begin{equation}\label{eq:vt-C2a-bdry}
\norm{\tilde{v}}_{C^{2,\bar\alpha} (Y\times[0,s_0])} \leq c_2,
\end{equation}
where $c_2>0$ is determined only by $n$, $\lambda$, $\Lambda$, $\alpha$, $\beta$, $\beta_0$ and $K$.

Now let us fix a nonnegative integer $k$ and define 
\begin{equation}
\tilde{v}_k (y,s) = \tilde{v}(y,s+k) \quad (y\in\R^n,s\geq 0). 
\end{equation}
Then from \eqref{eq:vt-pde}, we know that $\tilde{v}_k$ solves 
\begin{equation}
\p_s \tilde{v}_k = F(D_y^2 \tilde{v}_k,y,s+k) + f(y,s+k)\quad\text{in } 2Y\times(0,2).
\end{equation}
Hence, it follows from the interior $C^{2,\bar\alpha}$ estimate (with $\bar\alpha$ being the same as that in \eqref{eq:vt-C2a-bdry}) that $\tilde{v}_k\in C^{2,\bar\alpha} ( \bar{Y}\times[s_0,s_0+1])$ with 
\begin{equation}
\norm{\tilde{v}_k}_{C^{2,\bar\alpha}(\bar{Y}\times [s_0,s_0+1])} \leq c_3\left( \norm{\tilde{v}_k}_{L^\infty(2Y\times (0,2))} + \norm{f}_{C^\alpha(2Y\times (0,2))}\right),
\end{equation}
where $c_3>0$ depends only on $n$, $\lambda$, $\Lambda$, $\alpha$ and $K$. Utilizing \eqref{eq:decay} and \eqref{eq:decay-Ck-Ca-f} (with $m=0$), we deduce that
\begin{equation}\label{eq:vt-C2a-int}
\norm{\tilde{v}_k}_{C^{2,\bar\alpha}(\bar{Y}\times[s_0,s_0+1])} \leq c_4e^{-\beta_0 k},
\end{equation}
where $c_4>0$ is determined only by $n$, $\lambda$, $\Lambda$, $\alpha$, $\beta$, $\beta_0$ and $K$. 

Iterating \eqref{eq:vt-C2a-int} with respect to $k$ and utilizing \eqref{eq:vt-C2a-bdry} for the initial case of this iteration argument, we arrive at \eqref{eq:decay-Linf-C2a}.
\end{proof}

Let us remark that Lemma \ref{lemma:decay-Linf-C2a} yields the compactness (in $(y,s)$) of the sequences $\{v(x_i,t_i,y,s)\}_{i=1}^\infty$ and $\{\tilde{v}(x_i,t_i,y,s)\}_{i=1}^\infty$ when $(x_i,t_i)\ra (x,t)$. By the stability theory \cite{CIL} of viscosity solutions, we obtain that $v$ and $\tilde{v}$ are continuous in $(x,t)$, stated as below. Let us also point out that the following lemma is a version of continuous dependence estimates, and we refer to \cite{JK}, \cite{CM} and other literature for more discussions in this regard. 

\begin{lemma}\label{lemma:decay-C0-C2a}
Let $\bar\alpha$ be the H\"{o}lder exponent chosen in Lemma \ref{lemma:decay-Linf-C2a}. Then $\bar{v} \in C(\R^n\times[0,T])$ and $v \in C (\R^n\times[0,T];C_{loc}^{2,\hat\alpha}(\T^n\times[0,\infty)))$ for any $0<\hat\alpha<\bar\alpha$. 
\end{lemma}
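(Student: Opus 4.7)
The plan is to combine the uniform $C^{2,\bar\alpha}$ estimate from Lemma \ref{lemma:decay-Linf-C2a} with the stability theory of viscosity solutions and the uniqueness of the Cauchy problem \eqref{eq:v-pde}. First I would fix $(x,t)\in\R^n\times[0,T]$, take an arbitrary sequence $(x_i,t_i)\ra(x,t)$, and argue that the family $\{v(x_i,t_i,\cdot,\cdot)\}_{i\geq 1}$ is uniformly bounded in $C^{2,\bar\alpha}(\R^n\times[0,\infty))$ by \eqref{eq:decay-Linf-C2a}. A diagonal Arzel\`{a}-Ascoli extraction then yields, along a subsequence (not relabeled), $v(x_i,t_i,\cdot,\cdot)\ra w$ in $C_{loc}^{2,\hat\alpha}(\R^n\times[0,\infty))$ for any fixed $0<\hat\alpha<\bar\alpha$, with the limit $w$ lying in $C^{2,\bar\alpha}(\R^n\times[0,\infty))$ and inheriting the $\Z^n$-periodicity in $y$.

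Next I would identify $w$ by stability. The standing assumptions \eqref{eq:Ck-Ca-F}, \eqref{eq:decay-Ck-Ca-f} and \eqref{eq:Ck-C2a-g} yield that $F(\cdot,x_i,t_i,\cdot,\cdot)\ra F(\cdot,x,t,\cdot,\cdot)$ and $f(x_i,t_i,\cdot,\cdot)\ra f(x,t,\cdot,\cdot)$ locally uniformly on $\sS^n\times\R^n\times[0,\infty)$ and $\R^n\times[0,\infty)$ respectively, and $g(x_i,\cdot)\ra g(x,\cdot)$ uniformly on $\R^n$. The standard stability theorem for viscosity solutions \cite{CIL} then shows that $w$ is a bounded viscosity solution of \eqref{eq:v-pde} at the parameter value $(x,t)$. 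Since \eqref{eq:v-pde} admits a unique bounded viscosity solution, we conclude $w\equiv v(x,t,\cdot,\cdot)$. Because every subsequence has a further subsequence converging to the same limit, the full original sequence satisfies $v(x_i,t_i,\cdot,\cdot)\ra v(x,t,\cdot,\cdot)$ in $C_{loc}^{2,\hat\alpha}$, which is exactly $v\in C(\R^n\times[0,T]; C_{loc}^{2,\hat\alpha}(\R^n\times[0,\infty)))$.

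Finally, to upgrade this to continuity of $\bar{v}$, I would combine the uniform exponential decay \eqref{eq:decay-re} with the pointwise continuity just established. Given $\e>0$, \eqref{eq:decay-re} furnishes $s_0\geq 0$, independent of $(x,t)$, such that $|v(x',t',0,s_0)-\bar{v}(x',t')|<\e/3$ for every $(x',t')\in\R^n\times[0,T]$. Pointwise continuity of $v$ at $(y,s)=(0,s_0)$ gives $|v(x_i,t_i,0,s_0)-v(x,t,0,s_0)|<\e/3$ for $i$ large, and the triangle inequality yields $|\bar{v}(x_i,t_i)-\bar{v}(x,t)|<\e$, so $\bar{v}\in C(\R^n\times[0,T])$.

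The step I expect to require the most care is verifying the hypotheses of the stability theorem, namely that the coefficient convergence $F(\cdot,x_i,t_i,\cdot,\cdot)\ra F(\cdot,x,t,\cdot,\cdot)$ is sufficiently uniform on the sets where test functions used in the viscosity inequality can touch; however, the $C^\infty$ dependence of $F$ on $(x,t)$ together with its periodicity (hence uniform continuity) in $(y,s)$ renders this convergence locally uniform on $\sS^n\times\R^n\times[0,\infty)$, making the stability argument routine once set up.
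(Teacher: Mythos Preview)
Your proposal is correct and follows essentially the same approach as the paper's proof: uniform $C^{2,\bar\alpha}$ bounds from Lemma~\ref{lemma:decay-Linf-C2a} give compactness via Arzel\`a--Ascoli, stability of viscosity solutions together with uniqueness for \eqref{eq:v-pde} identifies every subsequential limit as $v(x,t,\cdot,\cdot)$, and the uniform exponential decay \eqref{eq:decay-re} combined with pointwise convergence of $v$ yields continuity of $\bar{v}$. The paper's argument for $\bar v$ is phrased via the inequality $|\bar v(x_i,t_i)-\bar v(x,t)|\le 2Ce^{-\beta_0 s}+|v(x_i,t_i,0,s)-v(x,t,0,s)|$, which is exactly your triangle-inequality step.
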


\begin{proof} As in the proof of Lemma \ref{lemma:decay-Linf-C2a}, we will fix $(x,t)\in\R^n\times[0,T]$ and continue with using the simplified notation for $F$, $f$, $g$, $v$, $\gamma$ and $\tilde{v}$. Let us take any sequence $(x_i,t_i)\ra (x,t)$ as $i\ra\infty$. For notational convenience, let us write $F_i (P,y,s) = F(P,x_i,t_i,y,s)$, $f_i(y,s) = f(x_i,t_i,y,s)$, $g_i(y) = g(x_i,y)$, $v_i (y,s) = v(x_i,t_i,y,s)$, $\gamma_i = \bar{v}(x_i,t_i)$ and $\tilde{v}_i(y,s) = v_i(y,s) - \gamma_i$. By $C$ we denote a positive constant that depends only on $n$, $\lambda$, $\Lambda$, $\alpha$, $\beta$, $\beta_0$ and $K$, and will let it vary from one line to another.

We prove $v_i\ra v$ first. By \eqref{eq:decay-Linf-C2a} we have 
\begin{equation*}
\norm{v_i}_{C^{2,\bar\alpha} (\T^n\times[0,\infty))} \leq C,
\end{equation*}
for any $i=1,2,\cdots$. Hence, we know from the Arzela-Ascoli theorem that for any subsequence $\{w_j\}_{j=1}^\infty$ of $\{v_i\}_{i=1}^\infty$, there exist a further subsequence $\{w_{j_k}\}_{k=1}^\infty$ and a certain function $w\in C^{2,\bar\alpha}(\T^n\times[0,\infty))$ such that $w_{j_k} \ra w$ in $C_{loc}^{2,\hat\alpha}(\T^n\times[0,\infty))$ as $k\ra\infty$, for any $0<\hat\alpha<\bar\alpha$. One may notice that $w_{j_k}$ solves 
\begin{equation}
\begin{cases}
\p_s w_{j_k} = F_{j_k} (D_y^2 w_{j_k},y,s) + f_{j_k} (y,s) & \text{in }\T^n\times(0,\infty),\\
w_{j_k} (y,0) = g_{j_k} (y) & \text{in }\T^n,
\end{cases}
\end{equation}
in the viscosity sense. Due to the regularity assumptions \eqref{eq:Ck-Ca-F}, \eqref{eq:Ck-C2a-g} and \eqref{eq:decay-Ck-Ca-f} on $F$, $g$ and respectively $f$, we know that $F_i \ra F$ uniformly on $\cS^n\times\T^n\times[0,\infty)$, $g_i\ra g$ uniformly on $\R^n$ and $f_i\ra f$ uniformly on $\T^n\times[0,\infty)$, as $i\ra\infty$. Hence, letting $k\ra\infty$, we observe from the stability theory \cite{CIL} that the limit function $w$ also solves 
\begin{equation}
\begin{cases}
w_s = F (D_y^2 w,y,s) + f(y,s) & \text{in }\T^n\times(0,\infty),\\
w (y,0) = g(y) & \text{on }\T^n,
\end{cases}
\end{equation}
in the viscosity sense. However, the above equation is identical with the equation \eqref{eq:v-pde}. Since $v$ is the unique solution to \eqref{eq:v-pde}, we deduce that $w=v$ on $\T^n\times[0,\infty)$. 

What we have proved so far is that for any subsequence of $\{v_i\}_{i=1}^\infty$, there exists a further subsequence which converges to $v$. Thus, $v_i \ra v$ as $i\ra\infty$ in $C_{loc}^{2,\hat\alpha}(\T^n\times[0,\infty))$ for any $0<\hat\alpha<\bar\alpha$. 

Now we are left with showing that $\gamma_i  \ra \gamma$. Due to \eqref{eq:decay-Linf-C2a}, we have 
\begin{equation}\label{eq:vti-decay-Ca}
e^{\beta_0 s}\norm{ \tilde{v}_i (\cdot,s) }_{C^{2,\bar\alpha}(\T^n)} \leq C,
\end{equation} 
for any $s\geq 0$, uniformly for all $i=1,2,\cdots$. Since $\tilde{v}_i(y,s) = v_i(y,s) - \gamma_i$ and $\tilde{v}(y,s) = v(y,s) - \gamma$, we deduce from \eqref{eq:vti-decay-Ca} and \eqref{eq:decay-Linf-C2a} that
\begin{equation}
| \gamma_i - \gamma | \leq 2Ce^{-\beta_0 s} + |v_i(0,s) - v(0,s)|.
\end{equation}
Given any $\delta>0$, we fix a sufficiently large $s_0$ such that $4Ce^{-\beta_0 s_0}\leq \delta$, and correspondingly choose $i_0$ such that $2|v_i(0,s_0) - v(0,s_0)| \leq \delta$ for all $i\geq i_0$. Then we have $|\gamma_i - \gamma|\leq \delta$ for all $i\geq i_0$, proving that $\gamma_i\ra \gamma$ as $i\ra\infty$. Thus, the proof is finished.
\end{proof}

By Lemma \ref{lemma:decay-C0-C2a}, we are ready to prove the differentiability of $v$ and $\bar{v}$ in the slow variables $(x,t)$, and an exponential decay estimate for the derivatives of $v-\bar{v}$. Here we use Lemma \ref{lemma:decay-C0-C2a} to obtain compactness (in $(y,s)$) of the difference quotients (in $(x,t)$) of $v$. Arguing similarly as in the proof of Lemma \ref{lemma:decay-C0-C2a}, we deduce that the difference quotients converge to a single limit, proving the differentiability of $v$. 

\begin{lemma}\label{lemma:decay-C1-C2a} Let $\bar\alpha$ be the H\"{o}lder exponent chosen in Lemma \ref{lemma:decay-Linf-C2a}. Then there exist $D_x \bar{v}(x,t)$ and $D_x v(x,t,\cdot,\cdot)\in C^{2,\bar\alpha} (\T^n\times[0,\infty))$ such that
\begin{equation*}
\begin{split}
| D_{x_k} \bar{v}(x,t) |  +  \norm{ D_{x_k} (v(x,t,\cdot,\cdot) - \bar{v}(x,t))}_{E^{2,\bar\alpha}(\T^n\times[0,\infty);\beta_1)} \leq C,
\end{split}
\end{equation*}
for any $(x,t)\in\R^n\times[0,T]$, where $0<\beta_1<\beta_0$ depends only on $n$, $\lambda$, $\Lambda$ and $\beta_0$, and $C>0$ depends only on $n$, $\lambda$, $\Lambda$, $\alpha$, $\beta$, $\beta_0$, $\beta_1$ and $K$. Moreover, we have $\bar{v} \in C^1(\R^n\times[0,T])$ and $v\in C^1(\R^n\times[0,T]; C_{loc}^{2,\hat\alpha} (\T^n\times[0,\infty)))$ for any $0<\hat\alpha<\bar\alpha$. 
\end{lemma}

\begin{remark}\label{remark:decay-C1-C2a} According to the parabolic terminology, $C^1$ regularity in $(x,t)$ only involves derivatives in $x$. For more details, see Section \ref{section:assump}.
\end{remark}

\begin{proof}[Proof of Lemma \ref{lemma:decay-C1-C2a}] Throughout this proof, let us write by $C$ a positive constant depending only on $n$, $\lambda$, $\Lambda$, $\alpha$, $\beta$ and $K$, and allow it to vary from one line to another. Fix $(x,t)\in\R^n\times[0,T]$ and $1\leq k\leq n$. We shall omit the dependence on $t$ for notational convenience. Let us define
\begin{align*}
A_\sigma (y,s) & = \int_0^1 D_p F ( \rho D_y^2 v(x+\sigma e_k,y,s) + (1-\rho) D_y^2 v(x,y,s),x,y,s) d\rho,\\
\Psi_\sigma (y,s) & = \frac{F(D_y^2 v(x+\sigma e_k,y,s), x+\sigma e_k, y,s) - F(D_y^2 v(x+\sigma e_k,y,s),x,y,s)}{\sigma} \\
&\quad + \frac{f(x+\sigma e_k,y,s) - f(x,y,s)}{\sigma},\\
G_\sigma (y) & = \frac{g(x+\sigma e_k,y) - g(x,y)}{\sigma},
\end{align*}
for $(y,s)\in\T^n\times[0,\infty)$, and nonzero $\sigma\in\R$. 

Clearly, $A_\sigma$, $\Psi_\sigma$ and $G_\sigma$ are periodic in $y$. The ellipticity of $A_\sigma$ follows immediately from \eqref{eq:ellip-F}. Indeed, $A_\sigma$ satisfies
\begin{equation}\label{eq:As-ellip}
\lambda |Q| \leq \tr( A_\sigma (y,s) Q) \leq \Lambda |Q|\quad (Q\in\cS^n,Q\geq 0),
\end{equation}
for any $(y,s)\in\T^n\times[0,\infty)$. It should be remarked that the lower and the upper ellipticity bounds of $A_\sigma$ are not only independent of $\sigma$ but also the same as those of $F$. 

By \eqref{eq:Ck-Ca-F} and \eqref{eq:decay-Linf-C2a}, we know that $A_\sigma\in C^{\bar\alpha}(\T^n\times[0,\infty))$ and 
\begin{equation}\label{eq:As-Ca}
\norm{ A_\sigma}_{C^{\bar\alpha}(\T^n\times[0,\infty))} \leq C.
\end{equation}
Let us remark here that we need Lipschitz regularity of $D_pF$ in $P$ in order to have \eqref{eq:As-Ca}. 

Similarly, we may deduce from 
\eqref{eq:Ck-Ca-F}, \eqref{eq:decay-Ck-Ca-f} and \eqref{eq:decay-Linf-C2a} that $\Psi_\sigma\in C^{\bar\alpha} (\T^n\times[0,\infty))$ satisfies
\begin{equation}\label{eq:Psis-decay-Ca}
\norm{\Psi_\sigma}_{E^{\bar\alpha}(\T^n\times[0,\infty);\beta_0)}  \leq C.
\end{equation}
On the other hand, it follows directly from \eqref{eq:Ck-C2a-g} that  $G\in C^{2,\alpha}(\R^n)$ and 
\begin{equation}\label{eq:Gs-C2a}
\norm{ G_\sigma }_{C^{2,\alpha}(\T^n)}  \leq K.
\end{equation}

Now we define 
\begin{equation}
V_\sigma (y,s)  = \frac{v(x+\sigma e_k,y,s) - v(x,y,s)}{\sigma}\quad\text{and}\quad \Gamma_\sigma = \frac{ \bar{v}(x+ \sigma e_k) - \bar{v}(x) }{\sigma},
\end{equation}
for $(y,s)\in\T^n\times[0,\infty)$ and nonzero $\sigma\in\R$. Linearizing the equation \eqref{eq:v-pde}, we see that $V_\sigma$ is a viscosity solution to 
\begin{equation}\label{eq:Vs-pde}
\begin{cases}
\p_s V_\sigma = \tr( A_\sigma(y,s) D_y^2 V_\sigma) + \Psi_\sigma(y,s) & \text{in }\T^n\times(0,\infty),\\
V_\sigma(y,0) = G_\sigma (y) & \text{on }\T^n. 
\end{cases}
\end{equation}

Owing to \eqref{eq:As-Ca} - \eqref{eq:Gs-C2a}, we observe that the equation \eqref{eq:Vs-pde} belongs to the same class of \eqref{eq:v-pde}. Hence, Lemma \ref{lemma:decay-Linf-C2a} is applicable to the problem \eqref{eq:Vs-pde}. In particular, the exponent $\beta$ in the statement of Lemma \ref{lemma:decay-Linf-C2a} is replaced here by $\beta_0$. Thus, we obtain some $0<\beta_1<\beta_0$, depending only on $n$, $\lambda$, $\Lambda$ and $\beta_0$, such that 
\begin{equation}\label{eq:Vs-decay-Ca}
| \Gamma_\sigma|  + \norm{V_\sigma}_{E^{2,\bar\alpha}(\T^n\times[0,\infty);\beta_1)} \leq C.
\end{equation}

Now we invoke the compactness argument used in the proof of Lemma \ref{lemma:decay-C0-C2a}. Choose any sequence $\sigma_i\ra 0$ as $i\ra\infty$. Then by \eqref{eq:Vs-decay-Ca}, there exist a subsequence $\{\tau_j\}_{j=1}^\infty$ of $\{\sigma_i\}_{i=1}^\infty$ and a function $V \in C^{2,\bar\alpha}(\T^n\times[0,\infty))$ such that $V_{\tau_j} \ra V$ in $C_{loc}^{2,\hat\alpha} (\T^n\times[0,\infty))$ as $j\ra\infty$, for any $0<\hat\alpha<\bar\alpha$. 

On the other hand, from the regularity assumptions on $F$ and $f$ (\eqref{eq:Ck-Ca-F} and \eqref{eq:decay-Ck-Ca-f} respectively) and the continuity of $D_y^2 v(x,t,y,s)$ in $(x,t)$ (Lemma \ref{lemma:decay-C0-C2a}), we deduce that $A_\sigma \ra A$ and $\Psi_\sigma \ra \Psi$ locally uniformly in $\T^n\times[0,\infty)$ as $\sigma \ra 0$, where
\begin{align*}
A(y,s) &= D_p F( D_y^2 v(x,y,s),x,y,s),\\
\Psi(y,s) & = D_{x_k} F(D_y^2 v (x,y,s),x,y,s) + D_{x_k} f(x,y,s).
\end{align*}
It also follows from the regularity assumption \eqref{eq:Ck-C2a-g} on $g$ that $G_\sigma \ra G$ uniformly in $\R^n$ with 
\begin{equation}
G(y) = D_{x_k} g(x,y).
\end{equation} 

Hence, it follows from the stability of viscosity solutions (see \cite{CIL} for the details) that the limit function $V$ of $V_{\tau_j}$ is a viscosity solution to 
\begin{equation}\label{eq:V-pde}
\begin{cases}
V_s = \tr( A(y,s) D_y^2 V) + \Psi(y,s) & \text{in }\T^n\times(0,\infty),\\
V(y,0) = G (y) & \text{on }\T^n. 
\end{cases}
\end{equation}
However, $A$, $G$ and $\Psi$ also satisfy \eqref{eq:As-ellip}, \eqref{eq:Gs-C2a} and respectively \eqref{eq:Psis-decay-Ca}. Thus, \eqref{eq:V-pde} belongs to the same class of \eqref{eq:v-pde}, which implies that $V$ is the unique (spatially periodic) viscosity solution to \eqref{eq:V-pde}. This shows that $V_\sigma \ra V$ in $C_{loc}^{2,\hat\alpha}(\T^n\times[0,\infty))$ as $\sigma\ra 0$, for any $0<\hat\alpha<\bar\alpha$. In other words, 
\begin{equation}
V (y,s) = D_{x_k} v(x,y,s).
\end{equation}  

Equipped with the uniform estimate \eqref{eq:Vs-decay-Ca} and the observation that $V_\sigma \ra V$, we may also prove that $\Gamma_\sigma \ra \Gamma$ as $\sigma\ra 0$, for some $\Gamma\in\R$. Since this part repeats the argument used in the end of the proof of Lemma \ref{lemma:decay-C0-C2a}, we skip the details. Let us remark that
\begin{equation}
\Gamma = D_{x_k} \bar{v}(x).
\end{equation} 

The second assertion of Lemma \ref{lemma:decay-C1-C2a} can be justified by following the proof of Lemma \ref{lemma:decay-C0-C2a} regarding \eqref{eq:V-pde}. To avoid the redundancy of the argument, we omit the details.
\end{proof}

From the proof of Lemma \ref{lemma:decay-C1-C2a}, we observe that the regularity of $v$ and $\bar{v}$ in $(x,t)$ can be improved in a systematic way. Induction on the order of the derivatives (in $(x,t)$) of $v$ and $\bar{v}$ leads us to the following proposition.

\begin{proposition}\label{proposition:decay-Ck-C2a} Under the assumptions \eqref{eq:ellip-F} - \eqref{eq:Ck-C2a-g} and \eqref{eq:decay-Ck-Ca-f} on $F$, $g$ and $f$, $v\in C^\infty (\R^n\times[0,T]; C^{2,\bar\alpha}(\T^n\times[0,\infty)))$ and $\bar{v}\in C^\infty(\R^n\times[0,T])$ with
\begin{equation}\label{eq:decay-Ck-C2a}
\begin{split}
\sum_{k + 2l = m} \left[ \left| D_x^k \p_t^l \bar{v}(x,t) \right| + \norm{ D_x^k \p_t^l ( v(x,t,\cdot,\cdot) - \bar{v}(x,t) ) }_{E^{2,\bar\alpha}(\T^n\times[0,\infty);\beta_m)} \right] \leq C_m,
\end{split}
\end{equation}
for all $(x,t)\in\R^n\times[0,T]$ and any $m\geq 0$, where $0<\beta_m<\beta$ depends only on $n$, $\lambda$, $\Lambda$, $m$ and $\beta$, and $C_m>0$ depends only on $n$, $\lambda$, $\Lambda$, $\alpha$, $\beta$, $m$ and $K$.
\end{proposition}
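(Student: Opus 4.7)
The plan is to argue by strong induction on the weighted order $m$. The case $m=0$ is Lemma \ref{lemma:decay-Linf-C2a}, and the case $(|\mu|,\nu)=(1,0)$ is Lemma \ref{lemma:decay-C1-C2a}. The case $(|\mu|,\nu)=(0,1)$ of a single time derivative is handled by the same argument as in Lemma \ref{lemma:decay-C1-C2a}: formally, $\p_t v$ is the viscosity solution of a linear Cauchy problem
\begin{equation*}
\begin{cases} W_s = \tr(A(y,s) D_y^2 W) + \Psi(y,s) & \text{in }\R^n\times(0,\infty),\\ W(y,0) = 0 & \text{on }\R^n, \end{cases}
\end{equation*}
with $A = D_p F(D_y^2 v,x,t,y,s)$ and $\Psi = \p_t F(D_y^2 v,x,t,y,s) + \p_t f(x,t,y,s)$; the exponential $s$-decay of $\Psi$ comes from $\p_t F(0,\cdot)\equiv 0$ (differentiating \eqref{eq:0-F} in $t$) combined with the exponential decay of $D_y^2 v$ from Lemma \ref{lemma:decay-Linf-C2a} and from \eqref{eq:decay-Ck-Ca-f}. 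The difference-quotient and stability argument of Lemma \ref{lemma:decay-C1-C2a} then yields $\p_t v$ and $\p_t \bar v$ at some rate $\beta_2<\beta_0$.

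For the inductive step at order $m\geq 2$, I would assume the conclusion at orders $\leq m-1$ and, given $(\mu,\nu)$ with $|\mu|+2\nu=m$, factor $D_x^\mu\p_t^\nu$ as $D_{x_k}\circ D_x^{\mu'}\p_t^{\nu}$ if $|\mu|\geq 1$ or $\p_t\circ D_x^{\mu}\p_t^{\nu-1}$ if $\nu\geq 1$, so the inner operator has weighted order $\leq m-1$. Setting $w=D_x^{\mu'}\p_t^{\nu'} v$, the inductive hypothesis provides the regularity and decay of $w$, and recursive differentiation of \eqref{eq:v-pde} via Fa\`a di Bruno shows that $w$ is the viscosity solution of a linear Cauchy problem
\begin{equation*}
\begin{cases} w_s = \tr(A(x,t,y,s) D_y^2 w) + \Psi(x,t,y,s) & \text{in }\R^n\times(0,\infty),\\ w(x,t,y,0) = G(x,y) & \text{on }\R^n, \end{cases}
\end{equation*}
in which $A$, $\Psi$, and $G$ are explicit polynomial expressions in the lower-order derivatives $\{D_x^{\tilde\mu}\p_t^{\tilde\nu} v\}$, $\{D_p^\kappa D_x^{\tilde\mu}\p_t^{\tilde\nu} F(D_y^2 v,\cdot)\}$, $\{D_x^{\tilde\mu}\p_t^{\tilde\nu} f\}$, and $\{D_x^{\tilde\mu} g\}$ of weighted order $\leq m-1$.

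Next I would verify that this linear problem lies in the class covered by Lemma \ref{lemma:decay-Linf-C2a} with $\beta$ replaced by $\beta_{m-1}$: ellipticity of $A$ is inherited from \eqref{eq:ellip-F} as in \eqref{eq:As-ellip}; the $C^{\bar\alpha}$ bounds on $A,\Psi$ and the exponential $s$-decay of $\Psi$ follow from \eqref{eq:Ck-Ca-F}, \eqref{eq:decay-Ck-Ca-f} and the inductive bound \eqref{eq:decay-Ck-C2a} at order $m-1$; the $C^{2,\alpha}$ bound on $G$ comes from \eqref{eq:Ck-C2a-g}. With these ingredients, I would form the difference quotient of $w$ in the remaining slow variable: the quotient solves a linear problem of the same class, so Lemma \ref{lemma:decay-Linf-C2a} produces uniform $C^{2,\bar\alpha}$ bounds together with exponential $s$-decay at some rate $\beta_m<\beta_{m-1}$. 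Arzel\`a-Ascoli plus the stability of viscosity solutions \cite{CIL} (as in Lemma \ref{lemma:decay-C1-C2a}) identify the subsequential limit as the unique solution of the further-linearized problem, producing both $D_x^\mu\p_t^\nu v$ and $D_x^\mu\p_t^\nu\bar v$ and establishing \eqref{eq:decay-Ck-C2a} at order $m$; the successive shrinkage of the decay exponent accounts for the $m$-dependence of $\beta_m$.

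The main obstacle is the bookkeeping that certifies the exponential $s$-decay of $\Psi$ at each stage, since the nonlinearity of $F$ produces summands of the schematic form $D_p^{|\kappa|} D_x^{\tilde\mu_0}\p_t^{\tilde\nu_0} F(D_y^2 v,\cdot)\cdot\prod_{i=1}^{|\kappa|} D_y^2 D_x^{\tilde\mu_i}\p_t^{\tilde\nu_i} v$, with $|\tilde\mu_0|+2\tilde\nu_0+\sum_{i\geq 1}(|\tilde\mu_i|+2\tilde\nu_i)\leq m-1$, excluding only the principal linearization $D_p F(D_y^2 v):D_y^2 w$. Each such summand decays exponentially for one of two reasons: if $|\kappa|\geq 1$, every factor $D_y^2 D_x^{\tilde\mu_i}\p_t^{\tilde\nu_i} v$ decays, because $\bar v$ is independent of $y$ and hence $D_y^2 D_x^{\tilde\mu_i}\p_t^{\tilde\nu_i} v = D_y^2 D_x^{\tilde\mu_i}\p_t^{\tilde\nu_i}(v-\bar v)$, to which the inductive hypothesis applies; if $|\kappa|=0$, then $|\tilde\mu_0|+2\tilde\nu_0\geq 1$, so $D_x^{\tilde\mu_0}\p_t^{\tilde\nu_0} F(0,\cdot)\equiv 0$ by differentiating \eqref{eq:0-F}, and Taylor expansion in $P$ around $0$ inserts a factor $|D_y^2 v|$ which decays exponentially by Lemma \ref{lemma:decay-Linf-C2a}. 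Since the remaining factors are uniformly $C^{\bar\alpha}$-bounded by \eqref{eq:Ck-Ca-F} and \eqref{eq:decay-Linf-C2a}, the whole product decays at the slowest rate among its decaying factors, yielding the required decay of $\Psi$ and closing the induction.
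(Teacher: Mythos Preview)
Your proposal is correct and follows essentially the same approach as the paper: induction on the weighted order of the $(x,t)$-derivative, linearization of \eqref{eq:v-pde} via difference quotients, and repeated application of Lemmas \ref{lemma:decay-Linf-C2a}--\ref{lemma:decay-C1-C2a} together with the stability of viscosity solutions. Your write-up is in fact more explicit than the paper's sketch, particularly the Fa\`a di Bruno bookkeeping and the case distinction ($|\kappa|\geq 1$ versus $|\kappa|=0$ using \eqref{eq:0-F}) that certifies the exponential $s$-decay of the source term $\Psi$; the paper subsumes this under the phrase ``utilizing the structure conditions of $F$, $f$ and $g$.''
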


\begin{remark}\label{remark:decay-Ck-C2a} As pointed out in Remark \ref{remark:decay}, the proof of this proposition does not use the periodicity of $F$ in $s$. Moreover, $0<\beta_m<\cdots<\beta_0<\beta$ for any $m\geq 1$ and $C_m$ depends on the choice of $\beta_0,\cdots,\beta_m$. 
\end{remark}

\begin{proof}[Proof of Proposition \ref{proposition:decay-Ck-C2a}]

The proof of this proposition repeats that of Lemma \ref{lemma:decay-C1-C2a}. One may notice that although the statement of this lemma only involves the derivatives in $x$, the proof works equally well for the derivatives in $t$. Here we will only provide the sketch of the proof, and leave out the details to avoid redundancy.

Let $V_k$ and $\bar{V}_k$ be the $k$-th order derivative (in $(x,t)$) of $v$ and respectively $\bar{v}$. Let ($P_k$) be the equation which $V_k$ solves, and suppose (as the induction hypothesis) that the coefficient $A_k$, the source term $\Psi_k$ and the initial data $G_k$ of ($P_k$) belong to the same class of \eqref{eq:v-pde}. We know that this hypothesis is satisfied when $k=1$, since in that case the equation ($P_k$) is precisely \eqref{eq:V-pde}. By the induction hypothesis, Lemma \ref{lemma:decay-C0-C2a} is applicable, which gives us higher regularity of $V_k$ in the fast variables. 

Now let $\{V_{k,\sigma}\}_{\sigma\neq 0}$ be the sequence of difference quotients of $V_k$ (in $(x,t)$). To avoid confusion, let us denote by ($P_{k,\sigma}$) the equation for $V_{k,\sigma}$. Let us also denote by $A_{k,\sigma}$, $\Psi_{k,\sigma}$ and $G_{k,\sigma}$ the coefficient, the source term and respectively the initial data of ($P_{k,\sigma}$). 

Following the proof of Lemma \ref{lemma:decay-C1-C2a}, we may observe that ($P_{k,\sigma}$) is obtained by linearizing ($P_k$). Utilizing the structure conditions of $F$, $f$ and $g$, one may deduce that ($P_{k,\sigma}$) belongs to the same class of ($P_k$) with the structure conditions for ($P_{k,\sigma}$) being independent of $\sigma$. Moreover, one may observe from the regularity assumptions on $F$ and $f$ that $A_{k,\sigma}$ and $\Psi_{k,\sigma}$ converge to some $A_{k+1}$ and $\Psi_{k+1}$, respectively, as $\sigma\ra 0$ locally uniformly in the underlying domain of $(y,s)$. Here one needs to use the continuity of $D_y^2 V_k$ in $(x,t)$ that will be given in the induction hypotheses. On the other hand, $G_{k,\sigma}$ will converge to some $G_{k+1}$ uniformly in $y$, due to the regularity assumption on $G$. 

Hence, the stability theory of viscosity solutions will ensure that any limit of $V_{k,\sigma}$ is a viscosity solution to the problem ($P_{k+1}$) having $A_{k+1}$, $\Psi_{k+1}$ and $G_{k+1}$ as the coefficient, the source term and, respectively, the initial data. Then the uniqueness of (viscosity) solutions to ($P_{k+1}$) will lead us to the observation that $V_{k,\sigma}$ converges to a single limit function, say $V_{k+1}$. In other words, $V_k$ is differentiable (in $(x,t)$) and the corresponding derivative is $V_{k+1}$. Utilizing this fact, one may also prove that $\bar{V}_k$ is differentiable with the derivative being $\bar{V}_{k+1}$. 

We observe that Lemma \ref{lemma:decay-C0-C2a} provides us the desired estimate for $V_k$ and $\bar{V}_k$, while Lemma \ref{lemma:decay-C1-C2a} yields that for $V_{k+1}$ and $\bar{V}_{k+1}$. The rest of the proof can now be finished by an induction argument. 
\end{proof}


\subsection{Cell Problem}\label{subsection:cell}

Due to the uniform ellipticity and the periodicity of $F$, we know from the classical work \cite{E2} that there is a functional $\bar{F}:\cS^n\times\R^n\times[0,T]\ra \R$ such that for each $(P,x,t)\in\cS^n\times\R^n\times[0,T]$, the following equation,
\begin{equation}\label{eq:cell}
w_s = F( D_y^2 w + P, x,t, y,s) - \bar{F}(P,x,t) \quad \text{in }\T^n\times\T,
\end{equation}
has a periodic viscosity solution $w \in C(\T^n\times\T)$. We also know that $\bar{F}$ is uniformly elliptic with the same ellipticity constants of $F$, and it is concave in the Hessian variable $P$.  

Moreover, periodic viscosity solutions to \eqref{eq:cell} are unique up to an additive constant, if any. This also allows us to define another functional $w:\cS^n\times\R^n\times[0,T]\times\T^n\times\T\ra\R$ such that $w(P,x,t,\cdot,\cdot)$ is the unique viscosity solution to \eqref{eq:cell} which also satisfies
\begin{equation*}
w(P,x,t,0,0) = 0.
\end{equation*}

We shall now study the regularity of $\bar{F}$ and $w$ in $(P,x,t)$, which follows closely to the authors' previous work \cite{KL1}. We begin by improving the regularity of $w$ in the fast variables $(y,s)$, based on the interior $C^{2,\alpha}$ estimates \cite{W2} for viscosity solutions to concave equations. We leave the proof to the reader, as it is straightforward from the classical regularity result, and the property of the cell problem. 

\begin{lemma}\label{lemma:cell-C0-C2a} There exists $0<\bar\alpha\leq\alpha$ depending only on $n$, $\lambda$, $\Lambda$ and $\alpha$ such that $w(P,x,t,\cdot,\cdot)\in C^{2,\bar\alpha}(\T^n\times\T)$ with
\begin{equation*}
\norm{ w(P,x,t,\cdot,\cdot) }_{C^{2,\bar\alpha}(\T^n\times\T)} \leq C|P|,
\end{equation*}
for each $(P,x,t)\in\cS^n\times\R^n\times[0,T]$, where $C>0$ depends only on $n$, $\lambda$, $\Lambda$, $\alpha$ and $K$. Moreover, $w\in C(\cS^n\times\R^n\times[0,T];C^{2,\hat\alpha}(\T^n\times\T))$ for any $0<\hat\alpha<\bar\alpha$. 
\end{lemma}

With the above lemma at hand, we can proceed with the proof of (continuous) differentiability of $\bar{F}$ and $w$ in $(P,x,t)$. The proof is also similar to that of Lemma \ref{lemma:decay-C1-C2a}.

\begin{lemma}\label{lemma:cell-C1-Ca} Let $\bar\alpha$ be the H\"{o}lder exponent chosen in Lemma \ref{lemma:cell-C0-C2a}. Then there exist $D_p^k D_x^l \bar{F} (P,x,t)$ and $D_p^k D_x^l w(P,x,t,\cdot,\cdot) \in C^{2,\bar\alpha}(\T^n\times\T)$, for any $k,l\geq 0$ with $k+l=1$.  such that 
\begin{equation*}
\left| D_p^k D_x^l \bar{F} (P,x,t) \right| + \norm{ D_p^k D_x^l w(P,x,t,\cdot,\cdot) }_{C^{2,\bar\alpha}(\T^n\times\T)} \leq C|P|^{1-k},
\end{equation*} 
for any $(P,x,t)\in\cS^n\times\R^n\times[0,T]$, where $C>0$ depends only on $n$, $\lambda$, $\Lambda$, $\alpha$ and $K$. Moreover, we have $\bar{F} \in C^1(\cS^n\times\R^n\times[0,T])$ and $w\in C^1(\cS^n\times\R^n\times[0,T];C^{2,\hat\alpha}(\T^n\times\T))$ for any $0<\hat\alpha<\bar\alpha$.
\end{lemma}

\begin{remark}\label{remark:cell-C1-Ca} As pointed out in Remark \ref{remark:decay-C1-C2a}, $C^1$ regularity in $(P,x,t)$ does not involve that in $t$, according to the parabolic terminology. 
\end{remark}

\begin{proof}[Proof of Lemma \ref{lemma:cell-C1-Ca}] In this proof, we use $C$ to denote a positive constant that depends only on $n$, $\lambda$, $\Lambda$, $\alpha$ and $K$, and allow it to vary from one line to another. We shall prove this lemma for the derivatives in $P$ only, since the same argument applies to the proof for the derivatives in $x$. Fix $(P,x,t)\in\cS^n\times\R^n\times[0,T]$ and $1\leq i,j\leq n$. Recall from Section \ref{section:assump} that by $E_{ij}$ we denote the $(i,j)$-th standard basis matrix in $\cS^n$. For notational convenience, we shall skip the dependence of $F$, $w$ and $\bar{F}$ on $(x,t)$. Define 
\begin{align*}
A_\sigma (y,s) &= \int_0^1 D_p F( \rho D_y^2 w(P + \sigma E_{ij}, y,s) + (1-\rho) D_y^2 w(P,y,s) + \rho\sigma E_{ij}, y,s) d\rho,\\
W_\sigma (y,s) &= \frac{w(P + \sigma E_{ij},y,s) - w(P,y,s)}{\sigma} \quad \text{and}\quad \Gamma_\sigma = \frac{\bar{F}(P + \sigma E_{ij}) - \bar{F}(P)}{\sigma},
\end{align*}
for $(y,s)\in\T^n\times\T$. By linearization, we deduce that $W_\sigma$ is a (viscosity) solution to 
\begin{equation}\label{eq:Ws-pde}
\p_s W_\sigma = \tr( A_\sigma(y,s) (D_y^2 W_\sigma + E_{ij})) - \Gamma_\sigma\quad\text{in }\T^n\times\T. 
\end{equation}

Clearly, $A_\sigma$ is periodic on $\T^n\times\T$. More importantly, $A_\sigma$ is uniformly elliptic in the sense of \eqref{eq:As-ellip} and H\"{o}lder continuous with the uniform estimate \eqref{eq:As-Ca}. It should be stressed that the lower and upper ellipticity bounds for of $A_\sigma$ are given by $\lambda$ and, respectively, $\Lambda$ and are independent of $\sigma$. Hence, \eqref{eq:Ws-pde} belongs to the same class of \eqref{eq:cell}. As a result, Lemma \ref{lemma:cell-C0-C2a} is applicable to \eqref{eq:Ws-pde}. This yields that $W_\sigma \in C^{2,\bar\alpha} (\T^n\times\T)$ and 
\begin{equation}\label{eq:G-Ws-Ca}
|\Gamma_\sigma| + \norm{ W_\sigma }_{C^{2,\bar\alpha}(\T^n\times\T)} \leq C|E_{ij}| \leq C. 
\end{equation} 

Notice that Lemma \ref{lemma:cell-C0-C2a} ensures $w\in C(\cS^n;C^{2,\hat\alpha}(\T^n\times\T))$ for any $0<\hat\alpha<\bar\alpha$. This combined with uniform ellipticity \eqref{eq:ellip-F} of $F$ yields that we have $A_\sigma \ra A$ in $C^{\hat\alpha}(\T^n\times\T)$ as $\sigma\ra 0$ for any $0<\hat\alpha<\bar\alpha$, where 
\begin{equation}
A (y,s) = D_p F (D_y^2 w(P,y,s) + P,y,s). 
\end{equation}
On the other hand, the uniform estimate \eqref{eq:G-Ws-Ca} and the periodicity of $W_\sigma$ implies that any subsequence of $\{(\Gamma_\sigma,W_\sigma)\}_{\sigma\neq 0}$ contains a further subsequence that converges in $\R\times C^{2,\hat\alpha}(\T^n\times\T)$, for any $0<\hat\alpha<\bar\alpha$. However, the stability \cite{CIL} of viscosity solutions ensures that a (uniform) limit $(\Gamma,W)$ of $\{(\Gamma_\sigma,W_\sigma)\}_{\sigma\neq 0}$, if any, should satisfy 
\begin{equation*}
W_s = \tr( A(y,s) (D_y^2 W + E_{ij})) - \Gamma\quad\text{in }\T^n\times\T,
\end{equation*} 
in the viscosity sense. Since $A$ is periodic and uniformly elliptic (in the sense of \eqref{eq:As-ellip})  and $W$ is also periodic, the classical argument \cite{E2} ensures the uniqueness of $\Gamma$. Moreover, since $W_\sigma (0,0) = 0$ for all nonzero $\sigma$, the limit $W$ should also be unique. Therefore, $\Gamma_\sigma \ra \Gamma$ and $W_\sigma\ra W$ as $\sigma \ra 0$, where the latter holds in $C^{2,\hat\alpha}(\T^n\times\T)$ for any $0<\hat\alpha<\bar\alpha$. 

By the definition of $\Gamma_\sigma$ and $W_\sigma$, we conclude that $\bar{F}$ and $w$ are differentiable at $P$ in direction $E_{ij}$ with 
\begin{equation}
\Gamma = D_{p_{ij}} \bar{F}(P)\quad\text{and}\quad W(y,s) = D_{p_{ij}} w(P,y,s). 
\end{equation}
The rest of the proof then follows from Lemma \ref{lemma:cell-C0-C2a}, and hence we omit the details. 
\end{proof} 

The following proposition is obtained by induction on the order of derivatives of $\bar{F}$ and $w$ in the slow variables $(P,x,t)$.  

\begin{proposition}\label{proposition:cell-Ck-C2a} Assume that $F$ verifies \eqref{eq:ellip-F} - \eqref{eq:Ck-Ca-F}. Then $\bar{F} \in C^\infty(\cS^n\times\R^n\times[0,T])$ and $w\in C^\infty(\cS^n\times\R^n\times[0,T];C^{2,\bar\alpha}(\T^n\times\T))$ and
\begin{equation}\label{eq:cell-Ck-C2a}
\begin{split}
\sum_{k + l + 2r = m} \left[ \left| D_p^k D_x^l \p_t^r \bar{F} (P,x,t) \right| + \norm{ D_p^k D_x^l \p_t^r w(P,x,t,\cdot,\cdot) }_{C^{2,\bar\alpha}(\T^n\times\T)} \right] \leq C_m|P|^{(1-k)_+},
\end{split}
\end{equation}
for all $(P,x,t)\in\cS^n\times\R^n\times[0,T]$ and for each integer $m \geq 0$, where $0<\bar\alpha\leq\alpha$ depends only on $n$, $\lambda$, $\Lambda$ and $\alpha$, and $C_m>0$ depends only on $n$, $\lambda$, $\Lambda$, $\alpha$, $m$ and $K$.
\end{proposition}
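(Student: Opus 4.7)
The plan is to prove Proposition \ref{proposition:cell-Ck-C2a} by induction on the total order $m = |\kappa| + |\mu| + 2\nu$. The base case $m=0$ is exactly Lemma \ref{lemma:cell-C0-C2a}, and $m=1$ is Lemma \ref{lemma:cell-C1-Ca}. As in the proof of Proposition \ref{proposition:decay-Ck-C2a}, the bulk of the argument is a template: take the equation that the $k$-th order derivative of $w$ formally satisfies, form difference quotients, verify that this new problem belongs to the same class as \eqref{eq:w-pde} so that Lemma \ref{lemma:cell-C0-C2a} applies uniformly in the difference parameter $\sigma$, and then identify the limit using the uniqueness statements (Lemma \ref{lemma:cell} and Remark \ref{remark:cell-w}).

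More precisely, fix a multi-index $(\kappa,\mu,\nu)$ of order $k\geq 1$, and write the formal derivative $W_k = D_p^\kappa D_x^\mu \p_t^\nu w$, $\Gamma_k = D_p^\kappa D_x^\mu \p_t^\nu \bar F$. By repeatedly differentiating \eqref{eq:w-pde} in $(P,x,t)$ using the chain rule (Faà di Bruno), $W_k$ should solve an equation of the form
\begin{equation}
\p_s W_k = \tr\bigl( A_k(y,s) (D_y^2 W_k + Q_k)\bigr) + \Phi_k(y,s) - \Gamma_k \quad\text{in } \R^n\times\R,
\end{equation}
where $A_k(y,s) = D_pF(D_y^2 w(P,x,t,y,s)+P,x,t,y,s)$ has the same ellipticity bounds $\lambda,\Lambda$, $Q_k\in\sS^n$ encodes the $E_{ij}$-type forcing when $|\kappa|\geq 1$, and $\Phi_k$ is an explicit polynomial expression in the lower-order derivatives of $w$ (of order $<k$) together with evaluations of $D_p^a D_x^b \p_t^c F$. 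To carry out the induction step, I would now define the difference quotient $W_{k,\sigma}$ of $W_{k-1}$ in the appropriate slow direction, check that $(A_{k,\sigma},Q_{k,\sigma},\Phi_{k,\sigma})$ converge (locally uniformly in $(y,s)$, using the inductive $C^{2,\hat\alpha}$-continuity of lower-order derivatives of $w$ and the assumption \eqref{eq:Ck-Ca-F}) to $(A_k,Q_k,\Phi_k)$, and that the problem for $W_{k,\sigma}$ lies in the class of \eqref{eq:w-pde} with constants independent of $\sigma$.

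Applying Lemma \ref{lemma:cell-C0-C2a} to this linearized problem (whose source $\Phi_{k,\sigma}$ is uniformly bounded in $C^\alpha$, itself a periodic function of $(y,s)$) yields $|\Gamma_{k,\sigma}| + \norm{W_{k,\sigma}}_{C^{2,\bar\alpha}(\R^n\times\R)} \leq C_m$. Compactness extracts subsequential limits, and the stability theory for viscosity solutions identifies any limit as a periodic solution to the corresponding linearized cell problem for $(\Gamma_k,W_k)$; Lemma \ref{lemma:cell} gives uniqueness of $\Gamma_k$, while Remark \ref{remark:cell-w} combined with the normalization $W_{k,\sigma}(0,0)=0$ gives uniqueness of $W_k$. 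Hence the full family converges, proving differentiability and establishing \eqref{eq:cell-Ck-C2a} at level $m$.

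The main obstacle is bookkeeping the $\norm{P}^{(1-|\kappa|)_+}$ growth on the right of \eqref{eq:cell-Ck-C2a} through the Faà di Bruno expansion of $\Phi_k$. Every time one differentiates in $P$, the chain rule produces terms in which a factor of $D_y^2 w$ or $P$ (each of size $\norm P$) is traded for a derivative of $D_p^a F$; by the structural bound in \eqref{eq:Ck-Ca-F}, this derivative carries a factor $\norm P^{(1-a)_+}$, and the net effect exactly accounts for the drop from $\norm P^{(1-|\kappa|+1)_+}$ to $\norm P^{(1-|\kappa|)_+}$ at each step, while derivatives in $(x,t)$ never improve the power of $\norm P$. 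Formalizing this requires a careful multi-index accounting, but it is purely algebraic once the induction template above is in place; everything analytic is already contained in Lemmas \ref{lemma:cell}, \ref{lemma:cell-C0-C2a}, and \ref{lemma:cell-C1-Ca}.
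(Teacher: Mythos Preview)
Your proposal is correct and follows essentially the same approach as the paper: the paper's proof simply states that higher regularity is obtained by inductively applying Lemma \ref{lemma:cell-C1-Ca} on the number of derivatives, referring back to the template of Proposition \ref{proposition:decay-Ck-C2a}, which is precisely the difference-quotient / uniform-estimate / stability / uniqueness scheme you outline. Your explicit discussion of the $\norm{P}^{(1-|\kappa|)_+}$ bookkeeping through the Fa\`a di Bruno expansion is a useful addition that the paper leaves implicit.
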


\begin{proof}
One may notice that the higher regularity of $\bar{F}$ and $w$ in the slow variables $(P,x,t)$ can be obtained by inductively applying Lemma \ref{lemma:cell-C1-Ca} on the number of derivatives. Since the whole argument resembles that of the proof of Proposition \ref{proposition:decay-Ck-C2a}, we omit the details. 
\end{proof}


\section{Higher Order Convergence Rate}\label{section:higher}

This section is devoted to achieving the higher order convergence rates of the homogenization process of \eqref{eq:ue-pde}. We expect that away from the initial time zone, by which we indicate the strip $0\leq t\leq \e^2$, the solution, $u^\e$, of \eqref{eq:ue-pde} becomes less affected by the rapidly oscillatory behavior of the initial data, and that it behaves more as a solution to certain Cauchy problem with a non-oscillatory initial data. Thus, it is reasonable to split $u^\e$ into the non-oscillatory part and the oscillatory part near the initial time layer.

For this reason, we construct two types of the higher order correctors associated with the homogenization problem \eqref{eq:ue-pde}, namely the initial layer corrector and the interior corrector. The former type captures the oscillatory behavior of $u^\e$ near the initial time layer, while the latter describes its behavior in the interior. The construction of these correctors of higher orders will be based on the regularity theory in the slow variables established in Section \ref{section:slow}.

Recall from Section \ref{section:assump} the definition of the classes $S(m,\bar\alpha;d,k)$ and $E(m,\bar\alpha;d,k)$.


\subsection{Initial Layer Corrector}\label{subsection:ini}

In this subsection, we aim at proving the following proposition. 

\begin{proposition}\label{proposition:ini} 
Assume that $F : \cS^n\times\R^n\times[0,T]\times\T^n\times \T\ra \R$ and $g : \R^n\times\T^n\ra\R$ verify \eqref{eq:ellip-F} - \eqref{eq:Ck-C2a-g}. Then there exist sequences $\{v_k: \R^n\times[0,T]\times\T^n\times[0,\infty) \ra \R\}_{k=0}^\infty$ and $\{\bar{v}_k : \R^n\times[0,T]\ra\R\}_{k=0}^\infty$ such that $v_k \in S(2,\bar\alpha;k)$, $\bar{v}_k\in S(k)$ and $v_k - \bar{v}_k \in E(2,\bar\alpha;k)$ for each $k\geq 0$. Moreover, for any integer $m\geq 0$, set 
\begin{equation}\label{eq:vtme-gbme}
\tilde{v}_m^\e (x,t) = \sum_{k=0}^m \e^k \left( v_k \left(x,t,\frac{x}{\e},\frac{t}{\e^2}\right) - \bar{v}_k(x,t)\right),\quad \bar{g}_m^\e (x) = \sum_{k=0}^m \e^k \bar{v}_k (x,0).
\end{equation}
Then one has 
\begin{equation}\label{eq:vtme-pde}
\begin{dcases}
\p_t \tilde{v}_m^\e = \frac{1}{\e^2} F \left( \e^2 D^2 \tilde{v}_m^\e ,x,t,\frac{x}{\e},\frac{t}{\e^2}\right) + \psi_m^\e \left(x,t,\frac{x}{\e},\frac{t}{\e^2}\right) & \text{in }\R^n\times(0,T),\\
\tilde{v}_m^\e (x,0) + \bar{g}_m^\e (x) = g\left(x,\frac{x}{\e}\right) & \text{on }\R^n,
\end{dcases}
\end{equation}
where $\psi_m^\e : \R^n\times[0,T]\times\T^n\times[0,\infty)\ra \R$ satisfies 
\begin{equation}\label{eq:psime-decay}
\left| \psi_m^\e (x,t,y,s) \right| \leq C_m\e^{m-1} e^{-\beta_m s},
\end{equation}
for any $0<\e\leq\frac{1}{2}$. 
\end{proposition}

\begin{remark}\label{remark:ini-peri} 
We shall call $v_k(x,t,y,s)$ the $k$-th order initial layer corrector and the function $\bar{g}_k:\R^n\ra\R$, defined by 
\begin{equation}\label{eq:gbk}
\bar{g}_k(x) = \bar{v}_k(x,0),
\end{equation}
the $k$-th order effective initial data. 
\end{remark}

\begin{remark}\label{remark:ini-peri1}
If $F$ is a linear operator so that $F(P,x,t,y,s) = \tr(A(x,t,y,s)P)$ for some matrix-valued mapping $A$, then $\tilde{v}_m^\e$ satisfies
\begin{equation}\label{eq:vtme-pde-lin}
\begin{dcases}
\p_t \tilde{v}_m^\e = \tr \left(A \left( x,t,\frac{x}{\e},\frac{t}{\e^2}\right) D^2 \tilde{v}_m^\e \right) + \psi_m^\e \left(x,t,\frac{x}{\e},\frac{t}{\e^2}\right) & \text{in }\R^n\times(0,T),\\
\tilde{v}_m^\e (x,0) + \bar{g}_m^\e (x) = g\left(x,\frac{x}{\e}\right) & \text{on }\R^n.
\end{dcases}
\end{equation} 
\end{remark}

Let us begin with heuristic arguments by the formal expansion. The computation presented here uses the Taylor expansion of $F$ in its matrix variable $P$. We should mention that such an approach has already been shown in the authors' previous work \cite{KL1}. 

Differentiating $\tilde{v}_m^\e$ with respect to $t$, we obtain 
\begin{equation}\label{eq:vtme-t}
\e^2 \p_t \tilde{v}_m^\e (x,t) = \sum_{k=0}^m \e^k \p_s v_k \left(x,t,\frac{x}{\e},\frac{t}{\e^2}\right) + \sum_{k=0}^m \e^{k+2} \p_t \left(v_k \left(x,t,\frac{x}{\e},\frac{t}{\e^2}\right) - \bar{v}_k(x,t)\right).
\end{equation}
In order to proceed with the derivatives of $\tilde{v}_m^\e$ in variable $x$, let us  define $V_k : \R^n\times[0,T]\times\T^n\times[0,\infty) \ra \cS^n$ by 
\begin{equation}\label{eq:Vk}
V_k  = 
\begin{dcases}
D_y^2 v_0, & k =0,\\
D_y^2 v_1 + D_{xy} v_0, & k = 1,\\
D_y^2 v_k + D_{xy} v_{k-1}+ D_x^2 (v_{k-2} - \bar{v}_{k-2}), & k\geq 2,
\end{dcases}
\end{equation}
and corresponding define 
\begin{equation}\label{eq:Vtk}
\tilde{V}_k = 
\begin{dcases}
V_k, & 0\leq k\leq m,\\
D_{xy} v_m + D_x^2 (v_{m-1} - \bar{v}_{m-1}), & k=m+1,\\
D_x^2 (v_m - \bar{v}_m), & k = m+2.
\end{dcases}
\end{equation}
With $\tilde{V}_k$, one can write
\begin{equation}\label{eq:vtme-x}
\e^2 D^2 \tilde{v}_m^\e (x,t) = \sum_{k=0}^m \e^k  \tilde{V}_k \left(x,t,\frac{x}{\e},\frac{t}{\e^2}\right) = V_0 \left(x,t,\frac{x}{\e},\frac{t}{\e^2}\right) + \e V_m^\e \left(x,t,\frac{x}{\e},\frac{t}{\e^2}\right),
\end{equation} 
where in the second identity we wrote $V_m^\e$ for the sum of $\e^{k-1} \tilde{V}_k$ over $1\leq k\leq m+2$. 

For notational convenience, let us write
\begin{equation}\label{eq:Ak}
A_k (x,t,y,s) = D_p^k F (V_0, x,t,y,s),\quad k\geq 1, 
\end{equation}
for $(x,t,y,s)\in\R^n\times[0,T]\times\T^n\times[0,\infty)$, and especially 
\begin{equation*}
A = A_1,
\end{equation*}
which is a $\cS^n$-valued mapping, uniformly elliptic in the sense that $\tr(A(x,t,y,s) N)$ satisfies the ellipticity condition \eqref{eq:ellip-F}. What we shall do is the Taylor expansion of $F$ in the matrix variable with base point $V_0$ and perturbation $\e V_m^\e$. In the following computation, we shall omit the variables $(x,t,\e^{-1}x,\e^{-2}t)$ in the exposition, since they do not play any important role. 
\begin{equation}\label{eq:vtme-x-cont}
\begin{split}
F \left( \e^2 D^2 \tilde{v}_m^\e\right) &= F ( V_0 + \e V_m^\e ) \\
&= F ( V_0) + \e \tr (A V_m^\e ) + \sum_{k=2}^m \frac{\e^k}{k!} A_k  ( V_m^\e,\cdots,V_m^\e) + R_m^\e \\
&= F ( V_0) + \e \tr (A V_1)  + \sum_{k=2}^m \e^k \left( \tr(A V_k) + \sum_{l=2}^k \frac{1}{l!} \sum_{\substack{ i_1 + \cdots + i_l = k \\ i_1,\cdots,i_l \geq 1}} A_l( V_{i_1},\cdots, V_{i_l}) \right) \\
&\quad + E_m^\e, 
\end{split}
\end{equation}
where $R_m^\e$ is the remainder term from the Taylor expansion, i.e.,
\begin{equation}\label{eq:Rme}
R_m^\e = F (V_0 + \e V_m^\e ) - F ( V_0) - \e \tr (A V_m^\e ) - \sum_{k=2}^m \frac{\e^k}{k!} A_k  ( V_m^\e,\cdots,V_m^\e), 
\end{equation}
and $E_m^\e$ is the term that contains further errors,
\begin{equation}\label{eq:Eme}
\begin{split}
& E_m^\e = R_m^\e +  \sum_{k=2}^{m+2} \sum_{ \substack{m+1\leq i_1 + \cdots + i_k \leq k(m+2) \\ 1\leq i_1,\cdots, i_k\leq m+2 } }  \frac{\e^{i_1 + \cdots + i_k}}{k!} A_k (\tilde{V}_{i_1},\cdots,\tilde{V}_{i_k}),
\end{split}
\end{equation}
Hence, plugging $\tilde{v}_m^\e$ into \eqref{eq:ue-pde} and equating the power of $\e$, and noting \eqref{eq:vtme-t} and \eqref{eq:vtme-x-cont}, we obtain a sequence of equations that $v_k$ should solve. The next lemma gives a rigorous justification of the above heuristic arguments.

\begin{lemma}\label{lemma:ini} 
One can recursively construct sequences $\{v_k \in S(2,\bar\alpha;k)\}_{k=0}^\infty$ and $\{\bar{v}_k\in S(k)\}_{k=0}^\infty$, with $v_k - \bar{v}_k \in E(2,\bar\alpha;k)$, as follows.
\begin{enumerate}[(i)]
\item $v_0(x,t,\cdot,\cdot)$ is the solution of 
\begin{equation}\label{eq:v0-pde}
\begin{cases}
\p_s v_0 = F(D_y^2 v_0,x,t,y,s) & \text{in }\T^n\times(0,\infty),\\
v_0 (x,t,y,0) = g (x,y) & \text{on }\T^n,
\end{cases}
\end{equation} 
\item For each $2\leq k\leq m$, $v_k (x,t,\cdot,\cdot)$ is the solution of 
\begin{equation}\label{eq:vk-pde}
\begin{cases}
\p_s v_k = \tr(A(x,t,y,s) D_y^2 v_k) + \Phi_k(x,t,y,s) & \text{in }\T^n\times(0,\infty),\\
v_k(x,t,y,0) = 0 & \text{on }\T^n,
\end{cases}
\end{equation}
where 
\begin{equation}\label{eq:Phik-ini}
\Phi_k = 
\begin{dcases}
0, & k =1,\\
\begin{aligned}
&\tr(A(D_{xy} v_{k-1} + D_x^2 (v_{k-2} - \bar{v}_{k-2})) - \p_t (v_{k-2}  - \bar{v}_{k-2}) \\
&+ \sum_{l=2}^k \frac{1}{l!} \sum_{\substack{ i_1 + \cdots + i_l = k \\ i_1,\cdots,i_l \geq 1}} A_l( V_{i_1},\cdots, V_{i_l}),
\end{aligned}
& k\geq 2. 
\end{dcases}
\end{equation} 
\item For each $k\geq 0$, 
\begin{equation*}
\bar{v}_k(x,t) = \lim_{s\ra\infty} v_k(x,t,0,s). 
\end{equation*}
\end{enumerate}
\end{lemma}

\begin{remark}\label{remark:ini}
If $F$ were linear, i.e., $F(P,x,t,y,s) = \tr(A(x,t,y,s)P)$ for some matrix-valued mapping $A$, then the summation term in the definition \eqref{eq:Phik-ini} of $\Phi_k$, for $k\geq 2$, is trivial, since $A_l = D_p^l F = 0$ for any $l\geq 2$. Moreover, the interior operator for $v_0$ is the same as that of $v_k$ for any $k\geq 1$; i.e., $\partial_s v_0 = \tr(A(x,t,y,s) D_y^2 v_0)$ in $\T^n\times(0,\infty)$. 
\end{remark}

\begin{proof} It is clear from Proposition \ref{proposition:decay-Ck-C2a} that $v_0\in S(2,\bar\alpha;0)$ and $\bar{v}_0 \in S(0)$ with $v_0 - \bar{v}_0 \in E(2,\bar\alpha;0)$. Henceforth, we shall suppose $m\geq 1$, and assume further, as the induction hypothesis, that we have already found $v_k\in S(2,\bar\alpha;k)$ and $\bar{v}_k\in S(k)$ satisfying $v_k - \bar{v}_k \in E(2,\bar\alpha;k)$, for $0\leq k\leq m-1$. 

Recall the mappings $V_k$ and $A_k$ from \eqref{eq:Vk} and \eqref{eq:Ak}. Since $v_k - \bar{v}_k \in E(2,\bar\alpha;k)$, we have $V_k \in E(0,\bar\alpha;k)$ for each $0\leq k\leq m-1$. This along with the structure condition \eqref{eq:Ck-Ca-F} of $F$ that $A_k \in E(0,\bar\alpha;k)$ for each $k\geq 0$ as well. 

Now let $\Phi_m$ be as in \eqref{eq:Phik-ini}. One may notice that $\Phi_m$ only involves functions $v_k$ and $\bar{v}_k$, for $0\leq k\leq m-1$, which are assumed to be known already. Hence, combining the induction hypothesis that $v_k - \bar{v}_k \in E(2,\bar\alpha;k)$, and the observation that $V_k, A_k\in E(0,\bar\alpha;k)$, deduce that $\Phi_m \in E(0,\bar\alpha;m)$. Thus, one can apply Proposition \ref{proposition:decay-Ck-C2a} again to the viscosity solution $v_m(x,t,\cdot,\cdot)$ of \eqref{eq:vk-pde}, and verify that $v_m\in S(2,\bar\alpha;m)$, $\bar{v}_m \in S(2,\bar\alpha;m)$ and $v_m - \bar{v}_m \in E(2,\bar\alpha;m)$. The proof is then completed by the induction principle. 
\end{proof}

\begin{remark}\label{remark:ini-proof} Let us remark that the proof above does not involve the periodicity of $F$ in the fast temporal variable $s$. This is why Proposition \ref{proposition:ini} holds even if we only assume the spatial periodicity of $F$ (that is periodicity in $y$), as mentioned in Remark \ref{remark:ini-peri}.
\end{remark}

We are now ready to prove Proposition \ref{proposition:ini}

\begin{proof}[Proof of Proposition \ref{proposition:ini}]
Let $\{v_k\}_{k=0}^\infty$ and $\{\bar{v}_k\}_{k=0}^\infty$ be the sequence taken from Lemma \ref{lemma:ini}, and let $\tilde{v}_m^\e$ and $\bar{g}_m^\e$ be as in \eqref{eq:vtme-gbme}.  Then it follows from \eqref{eq:vtme-t}, \eqref{eq:vtme-x-cont}, \eqref{eq:v0-pde} and \eqref{eq:vk-pde} that the functions $\tilde{v}_m^\e$ and $\bar{g}_m^\e$ defined by \eqref{eq:vtme-gbme} satisfy \eqref{eq:vtme-pde} with
\begin{equation*}
\psi_m^\e (x,t,y,s)  = \sum_{k= m-1}^m \e^k \p_t (v_k (x,t,y,s) - \bar{v} (x,t)) - \e^{-2} E_m^\e (x,t,y,s),
\end{equation*}
where $E_m^\e$ is given by \eqref{eq:Eme}. The rest of the proof is devoted to the proof of \eqref{eq:psime-decay}. 

From the fact that $v_k - \bar{v}_k \in E(2,\bar\alpha;k)$ for any $k\geq 0$, we know that
\begin{equation}\label{eq:vtm-decay}
\sum_{k= m-1}^m \e^k \left| \p_t (v_k (x,t,y,s) - \bar{v}_k(x,t)) \right| \leq C_m \e^{m-1} e^{-\beta_m s},
\end{equation}
for any $0<\e\leq\frac{1}{2}$. On the other hand, from the observation that $V_k, A_k \in E(0,\bar\alpha;k)$, the remainder term $R_m^\e$ in \eqref{eq:Rme} can be estimated as 
\begin{equation}\label{eq:Rm-decay}
\left| R_m^\e (x,t,y,s)\right| \leq \frac{\e^{m+1}}{(m+1)!} \left|B_{m+1}(V_m^\e,\cdots,V_m^\e)\right|(x,t,y,s)\leq C_m \e^{m+1} e^{-\beta_m s},
\end{equation}
for any $0<\e\leq\frac{1}{2}$. Noting that the summation indices $i_1,\cdots,i_k$ in the definition of $E_m^\e$ are subject to the restriction $i_1+ \cdots + i_k \geq m+1$, we deduce from \eqref{eq:Rm-decay} that
\begin{equation}\label{eq:Eme-decay}
\left| E_m^\e (x,t,y,s) \right| \leq C_m \e^{m+1} e^{-\beta_m s},
\end{equation} 
for all $0<\e\leq \frac{1}{2}$. Thus, \eqref{eq:psime-decay} follows from \eqref{eq:vtm-decay} and \eqref{eq:Eme-decay}. This completes the proof. 
\end{proof}


\subsection{Interior Corrector}\label{subsection:int}

In this subsection, we shall construct the higher order interior correctors. Here we shall consider a more general class of homogenization problems compared to \eqref{eq:ue-pde}. This will be essential in order to achieve the higher order convergence rate away from the initial time layer, and we shall discuss more in this direction in Section \ref{subsection:bootstrap}. 

\begin{proposition}\label{proposition:int} 
Assume that $F : \cS^n\times\R^n\times[0,T]\times\T^n\times\T\ra \R$ verifies \eqref{eq:ellip-F} - \eqref{eq:Ck-Ca-F}. Let $\{V_k : \R^n\times[0,T]\times\T^n\times[0,\infty) \ra \cS^n \}_{k=0}^\infty$ and $\{\bar{g}_k : \R^n\ra\R\}_{k=0}^\infty$ be such that $V_k \in E(0,\bar\alpha;k)$ and $\bar{g}_k\in S(k)$.

Then there exist  $\{w_k : \R^n\times[0,T]\times\T^n\times[0,\infty) \ra \R \}_{k=0}^\infty$, $\{w_k^\# : \R^n\times[0,T]\times\T^n\times\T \ra \R\}_{k=0}^\infty$ and $\{\bar{u}_k : \R^n\times[0,T]\ra\R\}_{k=0}^\infty$ satisfying $w_k, w_k^\# \in S(2,\bar\alpha;k)$, $w_k - w_k^\# \in E(2,\bar\alpha;k)$ with $w_0 = w_1 = w_0^\# = w_1^\# = 0$, as well as $\bar{u}_k\in S(k)$ with $\bar{u}_k(\cdot,0) = \bar{g}_k$ for any $k\geq 0$, such that the following is true. Define 
\begin{align*}
\tilde{w}_m^\e (x,t) &= \sum_{k=0}^m \e^k \left(w_k \left(x,t,\frac{x}{\e},\frac{t}{\e^2}\right) + \bar{u}_k(x,t) \right),\\
\tilde{w}_m^{\#,\e} (x,t) &= \sum_{k=0}^m \e^k \left(w_k^\# \left(x,t,\frac{x}{\e},\frac{t}{\e^2}\right) + \bar{u}_k(x,t) \right).
\end{align*}
Then one has 
\begin{equation}\label{eq:wtme-pde}
\begin{split}
\p_t \tilde{w}_m^\e &= \frac{1}{\e^2} F \left( \e^2 D^2 \tilde{w}_m^\e + \sum_{k=0}^m \e^k V_k,x,t,\frac{x}{\e},\frac{t}{\e^2}\right) \\
& \quad - \frac{1}{\e^2} F \left( \sum_{k=0}^m \e^k V_k,x,t,\frac{x}{\e},\frac{t}{\e^2}\right) + \psi_m^\e \left(x,t,\frac{x}{\e},\frac{t}{\e^2}\right) \quad \text{in }\R^n\times(0,T),
\end{split}
\end{equation}
where $V_k$ is evaluated at $(x,t,\e^{-1}x,\e^{-2}t)$, and 
\begin{equation}\label{eq:wtmeh-pde}
\partial_t \tilde{w}_m^{\#,\e} = \frac{1}{\e^2} F \left( \e^2 D^2 \tilde{w}_m^{\#,\e},x,t,\frac{x}{\e},\frac{t}{\e^2}\right) + \psi_m^{\#,\e} \left(x,t,\frac{x}{\e},\frac{t}{\e^2}\right) \quad\text{in }\R^n\times(0,T),
\end{equation}
with some $\psi_m^\e:\R^n\times[0,T]\times\T^n\times[0,\infty)\ra \R$, $\psi_m^{\#,\e}:\R^n\times[0,T]\times\T^n\times\T\ra \R$ satisfying 
\begin{equation}\label{eq:psime-psimhe-decay-Linf}
\left| \psi_m^{\#,\e} (x,t,y,s) \right|  + e^{\beta_m s} \left| (\psi_m^\e - \psi_m^{\#,\e}) (x,t,y,s)\right| \leq C_m\e^{m-1},
\end{equation} 
for any $0<\e\leq\frac{1}{2}$. 
\end{proposition}

\begin{remark}\label{remark:int}
The function $w_k^\#$ will be a time-periodic version of $w_k$, i.e., the former is also periodic in the fast time variable $s$ as well as $y$. In what follows, we shall call $w_k$ the $k$-th order interior corrector, and $\bar{u}_k$ the $k$-th order effective limit profile. 
\end{remark}

\begin{remark}\label{remark:int2}
The initial condition of $w_k$ is fixed by the condition $w_k - w_k^\#\in E(2,\bar\alpha;k)$. The precise condition will be specified in Lemma \ref{lemma:int} below. Moreover, $w_k$ and $w_k^\#$ depend on $\bar{g}_{k-2}$ inductively for every $k\geq 2$. This will also be made clear in the statement of Lemma \ref{lemma:int}. 
\end{remark} 

\begin{remark}\label{remark:int3}
Let us also address that the reason we consider not only $w_k$ but also $w_k^\#$ is due to the nonlinearity of the operator $F$. If $F$ were linear, i.e., $F(M,x,t,y,s) = \tr(A(x,t,y,s)M)$ for some matrix $A$, then we would have $w_k = w_k^\#$, implying that the higher order interior correctors are periodic in both space and time variables. Moreover, the construction of $w_k$ in that case will only depend on the given initial data $\bar{g}_k$, but not $V_k$, which affects the interior oscillation in the nonlinear case. These facts will be explained in more details later in Remark \ref{remark:chi2} and Remark \ref{remark:int5}. Furthermore, in the linear case, $\tilde{w}_m^\e = \tilde{w}_m^{\#,\e}$, $\psi_m^\e = \psi_m^{\#,\e}$ and $\tilde{w}_m^\e$ solves
\begin{equation}\label{eq:wtme-pde-lin}
\p_t \tilde{w}_m^\e = \tr \left(A\left(x,t,\frac{x}{\e},\frac{t}{\e^2}\right) \e^2 D^2 \tilde{w}_m^\e\right) + \psi_m^\e \left(x,t,\frac{x}{\e},\frac{t}{\e^2}\right) \quad \text{in }\R^n\times(0,T).
\end{equation}
\end{remark} 

Let us begin with the construction of matrix-corrector corresponding to the coefficient $A(x,t,\cdot,\cdot)$ defined as in \eqref{eq:Ak}, for each $(x,t)\in\R^n\times[0,T]$. Recall that $A(x,t,y,s) = D_P F(V_0,x,t,y,s)$, with $V_0=V_0(x,t,y,s)$ as in \eqref{eq:Vk}, and also that $V_0(x,t,y,s)$ decays exponentially fast as the time variable $s$ tends to $\infty$, while it oscillates periodically in the space variable $y$. For this reason, $A(x,t,y,s)$ becomes exponentially close to a space-time periodic matrix-valued mapping, $D_p F(0,x,t,y,s)$, as $s\ra\infty$, and the corresponding matrix-corrector should also capture such an oscillating pattern. 

Define $A^\#:\R^n\times[0,T]\times\T^n\times\T\ra\cS^n$ by 
\begin{equation}\label{eq:Ah}
A^\#(x,t,y,s) = D_p F (0,x,t,y,s). 
\end{equation}
As noted in the preceding paragraph, $A^\#$ is the space-time periodic matrix-valued mapping corresponding to $A$. Due to the periodicity in both space and time, to $A^\#(x,t,\cdot,\cdot)$ there corresponds a unique effective coefficient $\bar{A}(x,t)\in\cS^n$, for each $(x,t)\in\R^n\times[0,T]$. Moreover, due to \cite{E2}, there exists a unique $\cS^n$-valued mapping $\chi^\# = (\chi_{ij}^\#)$ on $\R^n\times[0,T]\times\T^n\times\T$ such that for each $(x,t)\in\R^n\times[0,T]$, $\chi_{ij}^\#(x,t,\cdot,\cdot)$ is the unique periodic solution to the following cell problem,
\begin{equation}\label{eq:chih-pde}
\begin{cases}
\p_s \chi_{ij}^\# = \tr (A^\# (x,t,y,s) (D_y^2 \chi_{ij}^\# + E_{ij})) - \tr (\bar{A} (x,t) E_{ij})\quad\text{in }\T^n\times\T, \\
\chi_{ij}^\# (x,t,0,0) = 0.
\end{cases}
\end{equation}
In particular, $\bar{A}$ is uniformly elliptic with the same ellipticity bounds as those of $A^\#$. Moreover by Proposition \ref{proposition:cell-Ck-C2a}, we know that $\bar{A} \in S(0)$, $\chi^\# \in S(2,\alpha;0)$, where $\alpha$ is the H\"{o}lder exponent in the regularity assumption \eqref{eq:Ck-Ca-F} of $F$. 

The following lemma ensures the existence of a matrix corrector mapping that exactly captures the oscillatory behavior of the coefficient $A$. 

\begin{lemma}\label{lemma:chi} 
There are mappings $\chi = (\chi_{ij}):\R^n\times[0,T]\times\T^n\times[0,\infty) \ra \cS^n$ and $\bar\vp = (\bar\vp_{ij}) : \R^n\times[0,T]\ra \cS^n$ such that $\chi_{ij}(x,t,\cdot,\cdot)$ is the solution to
\begin{equation}\label{eq:chi-pde}
\begin{cases}
\p_s \chi_{ij} = \tr (A(x,t,y,s) (D_y^2 \chi_{ij} + E_{ij})) - \tr (\bar{A}(x,t) E_{ij}) & \text{in }\T^n\times(0,\infty),\\
\chi_{ij}(x,t,y,0)=  \chi_{ij}^\# (x,t,y,0) - \bar\vp_{ij}(x,t) & \text{on }\T^n,
\end{cases}
\end{equation}
for each $(x,t)\in\R^n\times[0,T]$, and that $\chi - \chi^\# \in E(2,\bar\alpha;0)$. Moreover, $\chi \in S(2,\bar\alpha;0)$ and $\bar\vp\in S(0)$. 
\end{lemma}

\begin{remark}\label{remark:chi} 
One can replace $\chi_{ij}^\#$ in the initial condition of \eqref{eq:chi-pde} with an arbitrary (smooth) spacially periodic data, but still obtain the relation $\chi_{ij} - \chi_{ij}^\# \in E(2,\bar\alpha;0)$ by modifying $\bar\vp_{ij}$ accordingly in the same initial condition. Here we choose $\chi_{ij}^\#$ in the initial condition in order to have $\chi_{ij} = \chi_{ij}^\#$ for linear problems, which is briefly explained in the next remark.
\end{remark}

\begin{remark}\label{remark:chi2}
Suppose that $F$ is a linear operator, that is, $F(P,x,t,y,s) = \tr(A(x,t,y,s)P)$. This implies in \eqref{eq:Ak} and \eqref{eq:Ah} that $A (x,t,y,s) = D_p F(V_0,x,t,y,s) = D_p F(0,x,t,y,s) = A^\# (x,t,y,s)$, so the interior equations for $\chi_{ij}$ and $\chi_{ij}^\#$ are the same. Now the requirement $\chi - \chi^\# \in E(2,\bar\alpha;0)$ forces $\chi = \chi^\#$. 
\end{remark}

\begin{proof}[Proof of Lemma \ref{lemma:chi}]
Fix $(x,t)\in\R^n\times[0,T]$, $1\leq i,j\leq n$ and consider the following spatially periodic Cauchy problem, 
\begin{equation*}
\begin{cases}
\p_s \vp_{ij} = \tr (A(x,t,y,s) D_y^2 \vp_{ij} ) + b_{ij} (x,t,y,s) & \text{in }\T^n\times(0,\infty),\\
\vp_{ij}(x,t,y,0)=  0 & \text{on }\T^n,
\end{cases}
\end{equation*}
with 
\begin{equation*}
b_{ij} (x,t,y,s) = \tr ( (A (x,t,y,s) - A^\#(x,t,y,s) )(D_y^2 \chi_{ij}^\#(x,t,y,s) + E_{ij}) ).
\end{equation*}
Since $V_k \in E(0,\bar\alpha;k)$ and $\chi^\# \in S(k)$, we know that $b_{ij}$ decays exponentially fast as $s\ra\infty$. Thus, Lemma \ref{lemma:decay} implies that the function, 
\begin{equation*}
\bar\vp_{ij}(x,t) = \lim_{s\ra\infty} \vp_{ij} (x,t,0,s),
\end{equation*}
is well defined. Now it follows from the regularity assumption \eqref{eq:Ck-Ca-F} on $F$ together with Proposition \ref{proposition:decay-Ck-C2a} that this lemma is satisfied by 
\begin{equation*}
\chi_{ij} (x,t,y,s) = \chi_{ij}^\# (x,t,y,s) + (\vp_{ij} (x,t,y,s) - \bar\vp_{ij} (x,t)).
\end{equation*}
We omit the details.
\end{proof}

In what follows, let us write 
\begin{equation}\label{eq:Wk}
W_k = 
\begin{dcases}
0, & k=0,1,\\
D_y^2 w_k + D_{xy} w_{k-1} + D_x^2 (w_{k-2} + \bar{u}_{k-2}), & k\geq 2.
\end{dcases}
\end{equation}
Note that we set $W_0 = W_1 = 0$, which is coherent the assertion in Proposition \ref{proposition:int} that $w_0 = w_1 = 0$. Next set $W_k^\#$ by the time-periodic version of $W_k$, that is, 
\begin{equation}\label{eq:Wkh}
W_k^\# = 
\begin{dcases}
0, & k=0,1,\\
D_y^2 w_k^\# + D_{xy} w_{k-1}^\# + D_x^2 (w_{k-2}^\# + \bar{u}_{k-2}), & k\geq 2.
\end{dcases}
\end{equation}
Also let $A_k$ be as in \eqref{eq:Ak}, and set $A_k^\# \in S(0,\alpha;k)$ to its time-periodic version, 
\begin{equation}\label{eq:Akh}
A_k^\# (x,t,y,s) = D_p^k F(0,x,t,y,s),\quad k\geq 1.
\end{equation}
It follows from $V_0 \in S(0,\bar\alpha;0)$ that $A_k - A_k^\# \in E(0,\bar\alpha;k)$ for any $k\geq 0$. 

We are now ready to construct the higher order interior correctors as follows. 

\begin{lemma}\label{lemma:int} 
One can recursively construct $\{w_k:\R^n\times[0,T]\times\T^n\times[0,\infty)\ra\R \}_{k=0}^\infty$, $\{w_k^\#:\R^n\times[0,T]\times\T^n\times\T\ra \R\}_{k=0}^\infty$, $\{\bar{h}_k, \bar{u}_k : \R^n\times[0,T]\ra\R \}_{k=0}^\infty$ such that $w_k,w_k^\#\in S(2,\bar\alpha;k)$, $w_k - w_k^\# \in E(2,\bar\alpha;k)$, $\bar{h}_k, \bar{u}_k \in S(k)$ and the following hold. 
\begin{enumerate}[(i)]
\item $w_k^\#(x,t,\cdot,\cdot)$ is the periodic solution to 
\begin{equation}\label{eq:wkh-pde}
\begin{dcases}
\p_s w_k^\# = \tr( A^\# (x,t,y,s) D_y^2 w_k^\#) + \Phi_k^\# (x,t,y,s)\quad \text{in }\T^n\times\T,\\
w_k^\# (x,t,0,0) = 0,
\end{dcases}
\end{equation}
for each $(x,t)\in\R^n\times[0,T]$, where 
\begin{equation*}
\Phi_k^\# =
\begin{dcases}
0, & k=0,1,\\
\begin{aligned}
&\tr(A^\#(D_{xy} w_{k-1}^\# + D_x^2 (w_{k-2}^\# + \bar{u}_{k-2}))) - \p_t (w_{k-2}^\# + \bar{u}_{k-2}) \\
& + \sum_{l=2}^k \frac{1}{l!} \sum_{\substack{i_1 + \cdots + i_l =k \\ i_1,\cdots,i_l\geq 1 }} A_l^\#(W_{i_1}^\#,\cdots, W_{i_l}^\#).
\end{aligned}
& k\geq 2.
\end{dcases}
\end{equation*}
\item $w_k(x,t,\cdot,\cdot)$ is the solution to 
\begin{equation}\label{eq:wk-pde}
\begin{cases}
\p_s w_k  = \tr ( A(x,t,y,s) D_y^2 w_k) + \Phi_k (x,t,y,s) & \text{in }\T^n\times(0,\infty), \\
w_k (x,t,y,0) = w_k^\#(x,t,y,0) - \bar{h}_k(x,t) & \text{on }\T^n,
\end{cases}
\end{equation}
for each $(x,t)\in\R^n\times[0,T]$, where 
\begin{equation*}
\Phi_k =
\begin{dcases}
0, & k=0,1,\\
\begin{aligned}
&\tr(A(D_{xy} w_{k-1} + D_x^2 (w_{k-2} + \bar{u}_{k-2}))) - \p_t (w_{k-2} + \bar{u}_{k-2}) \\
& + \sum_{l=2}^k \frac{1}{l!} \sum_{\substack{i_1 + \cdots + i_l =k \\ i_1,\cdots,i_l\geq 1 }} (A_l(V_{i_1} + W_{i_1},\cdots, V_{i_l} + W_{i_l}) -A_l(V_{i_1},\cdots, V_{i_l})),
\end{aligned}
& k\geq 2.
\end{dcases}
\end{equation*}
\item $\bar{u}_k$ is the unique solution of
\begin{equation}\label{eq:ubk-pde}
\begin{cases}
\p_t \bar{u}_k = \tr (\bar{A}(x,t) D_x^2 \bar{u}_k) + \bar\Phi_k(x,t) & \text{in }\R^n\times(0,T),\\
\bar{u}_k (x,0) = \bar{g}_k(x) & \text{on }\R^n,
\end{cases}
\end{equation} 
where $\bar\Phi_k(x,t)$ is the unique number for which there exists a solution to 
\begin{equation}\label{eq:phikh-pde}
\begin{cases}
\p_s \phi_k^\#= \tr (A^\#(x,t,y,s) D_y^2 \phi_k^\#) + \Phi_k^\#(x,t,y,s) - \bar\Phi_k(x,t) \quad\text{in }\T^n\times\T, \\
\phi_k^\# (x,t,0,0) =0,
\end{cases}
\end{equation} 
for each $(x,t)\in\R^n\times[0,T]$. 
\end{enumerate}
\end{lemma}

\begin{remark}\label{remark:int4}
As mentioned in Remark \ref{remark:int2}, $w_k$ and $w_k^\#$ depend on $\bar{g}_{k-2}$ (and more accurately, all $\bar{g}_l$ with $0\leq l\leq k-2$) for all $k\geq 2$. This is because $\Phi_k$ and $\Phi_k^\#$ involve $\bar{u}_{k-2}$ (and again $\bar{u}_l$ for any $0\leq l\leq k-2$), and $\bar{u}_{k-2} = \bar{g}_{k-2}$ on the initial layer $\R^n\times\{0\}$, as is shown in \eqref{eq:ubk-pde}. 
\end{remark}

\begin{remark}\label{remark:int5}
If $F$ is a linear operator, then $w_k = w_k^\#$, since the interior equations for these functions are the same. This can be shown as follows. First, recall from Remark \ref{remark:chi2} that $A = A^\#$, due to the linearity of $F$. Moreover, we have $A_l = A_l^\# = 0$ for any $l\geq 2$, since both $A_l$ and $A_l^\#$ involve the $l$-th order derivative of $F$ in the matrix variable only. Now assuming that $w_l = w_l^\#$ for any $0\leq l\leq k-1$, we have $\Phi_k = \Phi_k^\#$ as well, since both $\Phi_k$ and $\Phi_k^\#$ involve $w_l$ and respectively $w_l^\#$ for $l\leq k-1$ only. This proves that the interior equations for $w_k$ and $w_k^\#$ are the same if $F$ is a linear operator. Now the requirement that $w_k - w_k^\# \in E(2,\bar\alpha;k)$ forces $w_k = w_k^\#$. 

In addition, the interior equation for $w_k$ in the linear case does not involve the data $V_k$ (or any of $\{V_l\}_{l=0}^\infty$), which shows that the construction of $w_k$ depends only on the given initial data $\bar{g}_k$. 
\end{remark}

\begin{proof} 
Since $\Phi_k^\#= 0$ for $k=0,1$, one should have $w_k^\# = 0$ for $k=0,1$ as well, since $w_k^\#$ is the unique periodic solution to \eqref{eq:wkh-pde}. This also implies that $w_k = 0$ for $k=0,1$. Hence, we only need to construct $w_k^\#$ and $w_k$ for $k\geq 2$.

Let us remark that the construction of $w_k^\#$ and $\bar{u}_{k-2}$, for $k\geq 2$, is independent of $w_k$. Moreover, the construction is very similar with the elliptic case, which can be found in the previous work \cite[Lemma 3.3.2]{KL1} by the authors. Especially, $w_k^\#$ is given by 
\begin{equation*}
w_k^\# (x,t,y,s) = \phi_k^\# (x,t,y,s) + \tr \left( \chi^\#(x,t,y,s) D_x^2 \bar{u}_{k-2}(x,t) \right),
\end{equation*} 
with $\chi^\#$ and $\phi_k^\#$ given as the unique periodic solutions to \eqref{eq:chih-pde} and respectively \eqref{eq:phikh-pde}; here one can also deduce that $\phi_k^\#\in S(2,\bar\alpha;k)$. We shall leave this part to the reader, and proceed directly with the construction of $w_k$ only. 

Fix any $m\geq 2$, and suppose that we have already found $w_k^\#$, $\phi_k^\#$ and $\bar{u}_{k-2}$, for $k\leq m$, and $w_k$, for $k\leq m-1$, that satisfy the assertions of this lemma. Note that $\Phi_m$ only involves these functions. Since $w_k\in S(2,\bar\alpha;k)$ and $w_k^\#\in S(2,\bar\alpha;k)$ together satisfy $w_k - w_k^\# \in E(2,\bar\alpha;k)$ as the induction hypotheses for $0\leq k\leq m-1$, one can derive along with the assumption $V_k\in S(0,\bar\alpha;k)$ that $\Phi_m \in S(0,\bar\alpha;m)$ and $\Phi_m - \Phi_m^\# \in S(0,\bar\alpha;m)$. Hence, one can argue analogously as with the proof of Lemma \ref{lemma:chi} and obtain functions $\bar\psi_m \in S(m)$ and $\phi_m \in S(2,\bar\alpha;m)$ such that for each $(x,t)\in\R^n\times[0,T]$, $\phi_m(x,t,\cdot,\cdot)$ is the solution of 
\begin{equation}\label{eq:phim-pde}
\begin{cases}
\p_s \phi_m = \tr (A (x,t,y,s) D_y^2 \phi_m) + \Phi_m (x,t,y,s) - \bar\Phi_m (x,t) &\text{in }\T^n\times(0,\infty),\\
\phi_m(x,t,y,0) = \phi_m^\#(x,t,y,0) - \bar\psi_m(x,t) & \text{on }\T^n, 
\end{cases}
\end{equation}
and that $\phi_m - \phi_m^\# \in E(2,\bar\alpha;m)$; this inclusion follows from $\Phi_m - \Phi_m^\# \in E(0,\bar\alpha;m)$ and $A - A^\# \in E(0,\bar\alpha;0)$. 

Finally, we define $w_m:\R^n\times[0,T]\times\T^n\times[0,\infty)\ra\R$ by 
\begin{equation*}
w_m (x,t,y,s) = \phi_m (x,t,y,s) + \tr (\chi (x,t,y,s) D_x^2 \bar{u}_{m-2} (x,t)),
\end{equation*}
and 
\begin{equation*}
\bar{h}_m(x,t) = \bar\psi_m (x,t) + \tr ( \bar\vp (x,t) D_x^2 \bar{u}_{m-2}(x,t)),
\end{equation*}
where $\bar\vp$ is the $\cS^n$-valued mapping chosen from Lemma \ref{lemma:chi}. Then it follows from \eqref{eq:phim-pde} and \eqref{eq:chi-pde} that
\begin{equation*}
\begin{split}
\p_s w_m &= \p_s \phi_m + \tr ( (\p_s \chi)D_x^2 \bar{u}_{m-2})\\
& = \tr(AD_y^2 \phi_m) + \Phi_m - \bar\Phi_m + \tr((AD_y^2 \chi - \bar{A})D_x^2 \bar{u}_{m-2}) \\
& = \tr(AD_y^2 w_m) + \Phi_m,
\end{split}
\end{equation*}
in $(y,s)$, with each $(x,t)$ fixed, which verifies the interior equation in \eqref{eq:wk-pde}. To justify the initial condition, observe from the initial condition of \eqref{eq:chi-pde} and \eqref{eq:phim-pde} that 
\begin{equation*}
\begin{split}
w_m (x,t,y,0) &= \phi_m^\#(x,t,y,0) - \bar\psi_m(x,t) + \tr ( (\chi^\# (x,t,y,0) - \bar\vp(x,t)) D_x^2 \bar{u}_{m-2}(x,t)) \\
&= w_m^\#(x,t,y,0) - \bar{h}_m(x,t). 
\end{split}
\end{equation*}
On the other hand, $w_m \in S(2,\bar\alpha;m)$, $\bar{h}_m \in S(2,\bar\alpha;m)$ and $w_m - w_m^\#\in E(2,\bar\alpha;m)$, since we have $\phi_m - \phi_m^\# \in E(2,\bar\alpha;m)$, $\chi- \chi^\# \in E(2,\bar\alpha;0)$, $\bar\psi_m \in S(m)$, $\bar\vp \in S(0)$ and $\bar{u}_{m-2} \in S(m-2)$. We omit the details. 
\end{proof}

Equipped with Lemma \ref{lemma:int}, we are ready to prove Proposition \ref{proposition:int}. 

\begin{proof}[Proof of Proposition \ref{proposition:int}] 
Here we shall only address the notable difference in the computation involving the Taylor expansion, when proving Proposition \ref{proposition:int} (iv), and leave the rest of the argument to the reader, since the main argument follows closely to the proof of Proposition \ref{proposition:ini}. Let us define
\begin{equation*}
X_k = V_k + W_k, 
\end{equation*}
with $W_k$ given as in \eqref{eq:Wk}. Since $W_0 = W_1 = 0$, we have $X_0 = V_0$ and $X_1 = V_1$. Thus, one can proceed as in the computation in \eqref{eq:vtme-x-cont} and deduce that 
\begin{equation*}
\begin{split}
& F ( V_0 + \e X_m^\e ) - F( V_0 + \e V_m^\e) + \psi_m^\e\\
& = \sum_{k=2}^m \e^k \left( \tr(A(X_k - V_k)) + \sum_{l=2}^k \frac{1}{l!} \sum_{ \substack{ i_1+\cdots + i_l = k  \\ i_1,\cdots,i_l \geq 1}} ( A_l (X_{i_1},\cdots,X_{i_l}) - A_l (V_{i_1},\cdots,V_{i_l}) ) \right)  \\
&= \sum_{k=2}^m \e^k \left( \tr(AW_k) + \sum_{l=2}^k \frac{1}{l!} \sum_{ \substack{ i_1+\cdots + i_l = k  \\ i_1,\cdots,i_l \geq 1}} ( A_l (V_{i_1} + W_{i_1},\cdots,V_{i_l} + W_{i_l}) - A_l (V_{i_1},\cdots,V_{i_l}) ) \right),
\end{split}
\end{equation*}
where $\psi_m^\e$ is the error term of the form \eqref{eq:Eme}, $V_m^\e = \sum_{k=1}^m \e^{k-1} V_k$ and $X_m^\e= \sum_{k=1}^m \e^{k-1} X_k + \sum_{k=m+1}^{m+2} \e^{k-1} \tilde{W}_k$, with 
\begin{equation}\label{eq:Wtk}
\tilde{W}_k = 
\begin{dcases}
W_k, & 0\leq k\leq m,\\
D_{xy} w_m + D_x^2 (w_{m-1} + \bar{u}_{m-1}), & k = m+1,\\
D_x^2 (w_m + \bar{u}_m), & k = m+2. 
\end{dcases}
\end{equation}
Hence, it follows from the recursive equations \eqref{eq:wk-pde} of $w_k$ that 
\begin{equation*}
F (V_0 + \e X_m^\e ) - F ( V_0 + \e V_m^\e) - E_m^\e = \sum_{k=2}^m \e^k (\p_s w_k + \p_t (w_{k-2} + \bar{u}_{k-2})). 
\end{equation*}
This shows that $\tilde{w}_m^\e$ solves \eqref{eq:wtme-pde} with the remainder term $\psi_m^\e$. We skip the rest of the proof. 
\end{proof}

\subsection{Iteration Scheme and Nonlinear Coupling Effect}\label{subsection:bootstrap}

The main goal of this subsection is to establish our main result, Theorem \ref{theorem:higher}. Throughout this subsection, let us assume that $F:\cS^n\times\R^n\times[0,T]\times\T^n\times\T\ra \R$ and $g:\R^n\times\T^n\ra\R$ verify \eqref{eq:ellip-F}  - \eqref{eq:Ck-C2a-g}. See Figure \ref{figure:iter} for a diagram on the iteration scheme for higher order correction.

\usetikzlibrary{calc,trees,positioning,arrows,chains,shapes.geometric,%
    decorations.pathreplacing,decorations.pathmorphing,shapes,%
    matrix,shapes.symbols}

\tikzset{
>=stealth',
  punktchain/.style={
    rectangle, 
    rounded corners, 
    draw=black, very thick,
    text width=12em, 
    minimum height=3em, 
    text centered, 
    on chain},
  punktchain2/.style={
    rectangle, 
    rounded corners, 
    draw=black, very thick,
    text width=8em, 
    minimum height=3em, 
    text centered, 
    on chain},
  line/.style={draw, thick, <-},
  element/.style={
    tape,
    top color=white,
    bottom color=blue!50!black!60!,
    minimum width=8em,
    draw=blue!40!black!90, very thick,
    text width=10em, 
    minimum height=3.5em, 
    text centered, 
    on chain},
  every join/.style={->, thick,shorten >=1pt},
  decoration={brace},
  tuborg/.style={decorate},
  tubnode/.style={midway, right=2pt},
}

\begin{figure}
\begin{tikzpicture}
  [node distance=.8cm,
  start chain=going below,]
\node[punktchain, join] (intro) {Homogenization of  
     \begin{equation*}
     (\partial_t - F_{d,m}^\e)  = 0
     \end{equation*}
     with rapidly oscillating \\initial data
     \begin{equation*}
     g_{d,m}^\e \left(\cdot,\frac{\cdot}{\e}\right)
     \end{equation*}};
     
\node[punktchain, join] (ini) {Initial layer correction
     \begin{equation*}
     \begin{split}
     u_{d,m}^\e &= \tilde{v}_{d,m}^\e + \tilde{u}_{d,m}^\e \\
     &\quad + O(\e^{m-1})
     \end{split}
     \end{equation*}};
     
      \begin{scope}[start branch=venstre,
        every join/.style={->, thick, shorten <=1pt}, ]
        \node[punktchain2, on chain=going left]
            (scale) {Normalization
            \begin{equation*}
            \e^2 u_{d+1,m}^\e \mapsto u_{d+1,m}^\e 
            \end{equation*}};
     \end{scope}

      \begin{scope}[start branch=venstre,
        every join/.style={->, thick, shorten <=1pt}, ]
        \node[punktchain2, on chain=going right]
            (linear) {Linear case ($F = L$): 
            \begin{equation*}F_{d,m}^\e = \tilde{F}_{d,m}^\e = L
            \end{equation*}};
     \end{scope}
     
 \node[punktchain, join] (non-osc) {Homogenization of  
     \begin{equation*}
     (\partial_t - \tilde{F}_{d,m}^\e)  = 0
     \end{equation*}
     with non-oscillatory \\initial data
     \begin{equation*}
     \bar{g}_{d,m}^\e (\cdot)
     \end{equation*}};
     
\node[punktchain, join] (int) {Interior correction
     \begin{equation*}
     \begin{split}
     \tilde{u}_{d,m}^\e &= \tilde{w}_{d,m}^\e + \e^2 u_{d+1,m}^\e \\
     &\quad + O (\e^{m-1}) 
     \end{split}
     \end{equation*}};
     
\draw[|-,-|,->, thick,] (int.west) |-+(0,0em)-| (scale.south);

\draw[-|,|-,->, thick, dotted] (linear.south) -|+(0,0em)|- (non-osc.east);

\draw[-|,|-,->, thick,] (scale.north) -|+(0,0em)|- (intro.west);

\draw[-|,|-,->, thick, dotted] (linear.north) -|+(0,0em)|- (intro.east);
\end{tikzpicture}\caption{Iteration Scheme at $d$-th Step} \label{figure:iter}
\end{figure}

Let us begin with the higher order correction near the initial layer.

\begin{lemma}\label{lemma:inid-1}
Under the assumption of Theorem \ref{theorem:higher}, one can construct $\{v_k\}_{k=0}^\infty$, $\{\bar{v}_k\}_{k=0}^\infty$, $\tilde{v}_m^\e$ and $\bar{g}_m^\e$ as in Proposition \ref{proposition:ini}. Let $\tilde{u}_m^\e$ be the bounded viscosity solution of 
\begin{equation}\label{eq:utme-pde}
\begin{dcases}
\begin{aligned}
\partial_t \tilde{u}_m^\e  &= \frac{1}{\e^2} F \left( \e^2 D^2 \tilde{u}_m^\e + \sum_{k=0}^m \e^k V_k ,x,t,\frac{x}{\e},\frac{t}{\e^2}\right) \\
& \quad - \frac{1}{\e^2} F \left( \sum_{k=0}^m \e^k V_k ,x,t,\frac{x}{\e},\frac{t}{\e^2}\right)
\end{aligned}
& \text{in }\R^n\times(0,T),\\
\tilde{u}_m^\e (x,0) = \bar{g}_m^\e (x) & \text{on }\R^n,
\end{dcases}
\end{equation} 
with $V_k$ given as in \eqref{eq:Vtk}. Then one has, for any $0<\e\leq\frac{1}{2}$, 
\begin{equation}\label{eq:inid-1}
\left| u^\e (x,t) - \tilde{v}_m^\e (x,t) - \tilde{u}_m^\e (x,t) \right| \leq C_m \e^{m-1},
\end{equation}
for all $x\in\R^n$ and $0\leq t\leq T$. In particular, 
\begin{equation}\label{eq:inid-log-1}
\left| u^\e (x,t) - \tilde{u}_m^\e (x,t) \right| \leq C_m \e^{m-1},
\end{equation}
for all $x\in\R^n$ and $c_m \e^2 |\log \e| \leq t\leq T$. 
\end{lemma}

\begin{proof}
Let us first prove the lemma for linear operators and then consider fully nonlinear ones.

\begin{case} Linear problems.
\end{case}

This lemma is straightforward when $F$ is a linear functional. Say $F (P,x,t,y,s) = \tr(A(x,t,y,s)P)$, and let $\tilde{u}_m^\e$ be the solution of the linear version of \eqref{eq:utme-pde}; i.e., 
\begin{equation}\label{eq:utme-pde-lin}
\begin{dcases}
\partial_t \tilde{u}_m^\e  = \tr \left(A \left( x,t,\frac{x}{\e},\frac{t}{\e^2}\right)D^2 \tilde{u}_m^\e \right) & \text{in }\R^n\times(0,T),\\
\tilde{u}_m^\e (x,0) = \bar{g}_m^\e (x) & \text{on }\R^n.
\end{dcases}
\end{equation}
In view of the initial value problem \eqref{eq:vtme-pde-lin} for $\tilde{v}_m^\e$ for the linear case, we have 
\begin{equation*}
\left( \tilde{u}_m^\e + \tilde{v}_m^\e \right) = \tr \left(A \left( x,t,\frac{x}{\e},\frac{t}{\e^2}\right)D^2 \left( \tilde{u}_m^\e + \tilde{v}_m^\e \right) \right) + \psi_m^\e \left(x,t,\frac{x}{\e},\frac{t}{\e^2}\right) \quad \text{in }\R^n\times(0,T),
\end{equation*}
where $\psi_m^\e$ is given as in Proposition \ref{proposition:ini}, and 
\begin{equation*}
\tilde{u}_m^\e(x,0) + \tilde{v}_m^\e (x,0) = g \left(x,\frac{x}{\e}\right) \quad \text{on }\R^n. 
\end{equation*} 
In comparison with the initial value problem \eqref{eq:ue-pde} for $u^\e$, one can deduce from the standard comparison principle \cite{CIL} that
\begin{equation*}
\left| u^\e (x,t) - \tilde{v}_m^\e (x,t) - \tilde{u}_m^\e (x,t) \right| \leq \sup_{\xi \in \R^n} \int_0^t \left| \psi_m^\e (\xi,\tau,\e^{-1} \xi,\e^{-2} \tau ) \right| d\tau \leq C_m \e^{m-1},
\end{equation*} 
for any $(x,t)\in\R^n\times[0,T]$, where the last inequality follows easily from the exponential decay estimate \eqref{eq:psime-decay} of $\psi_m^\e$. Moreover, we obtain \eqref{eq:inid-log-1} from the exponential decay estimate of $\tilde{v}_m^\e$ (due to $v_k - \bar{v}_k \in E(2,\bar\alpha;k)$ for any $k\geq 0$).

\begin{case}
Nonlinear problems.
\end{case}

The basic idea is the same as in the linear case. Here the nonlinear coupling effect adds more error terms in the comparison principle, which will be the main focus below.

We claim that $\tilde{u}_m^\e + \tilde{v}_m^\e$ (with $\tilde{u}_m^\e$ a solution to \eqref{eq:utme-pde}) solves 
\begin{equation}\label{eq:utme-vtme-pde}
\begin{dcases}
\begin{aligned}
\p_t \left( \tilde{u}_m^\e + \tilde{v}_m^\e \right) &= \frac{1}{\e^2} F \left( \e^2 D^2 \left( \tilde{u}_m^\e + \tilde{v}_m^\e\right), x, t, \frac{x}{\e}, \frac{t}{\e^2} \right) \\
&\quad + r_m^\e \left(x,t,\frac{x}{\e},\frac{t}{\e^2}\right) 
\end{aligned}
& \text{in }\R^n\times(0,T),\\
\left( \tilde{u}_m^\e + \tilde{v}_m^\e \right) (x,0) = g\left(x,\frac{x}{\e}\right) & \text{on }\R^n,
\end{dcases}
\end{equation}
with the remainder term $r_m^\e$ satisfying
\begin{equation}\label{eq:rme-decay}
\left |r_m^\e (x,t,y,s) \right| \leq C_m \e^{m-1} e^{-\beta_m s}.
\end{equation}
The rest of the proof then follows the same argument as in the linear case above. 

From the initial conditions of \eqref{eq:utme-pde} and \eqref{eq:vtme-pde}, one can easily verify the initial condition of \eqref{eq:utme-vtme-pde}. Hence, it only remains to check the interior equation and the exponential decay estimate of the remainder term. However, from the computation \eqref{eq:vtme-x} of $D^2\tilde{v}_m^\e$, one can proceed as 
\begin{equation*}
\begin{split}
& F \left( \e^2 D^2 \left( \tilde{u}_m^\e + \tilde{v}_m^\e \right),x,t,\frac{x}{\e},\frac{t}{\e^2}\right) \\
& = F \left( \e^2 D^2 \tilde{u}_m^\e + \sum_{k=0}^m \e^k V_k + \sum_{k=m+1}^{m+2} \e^k \tilde{V}_k,x,t,\frac{x}{\e},\frac{t}{\e^2}\right) \\
& = F \left( \e^2 D^2 \tilde{u}_m^\e + \sum_{k=0}^m \e^k V_k,x,t,\frac{x}{\e},\frac{t}{\e^2}\right) + h_m^{1,\e} \left(x,t,\frac{x}{\e},\frac{t}{\e^2}\right) \\
& = \e^2 \p_t \tilde{u}_m^\e + F \left( \sum_{k=0}^m \e^k V_k,x,t,\frac{x}{\e},\frac{t}{\e^2} \right) + h_m^{1,\e} \left(x,t,\frac{x}{\e},\frac{t}{\e^2}\right) \\
& = \e^2 \p_t \tilde{u}_m^\e + F \left( \sum_{k=0}^m \e^k V_k + \sum_{k=m+1}^{m+2} \e^k \tilde{V}_k,x,t,\frac{x}{\e},\frac{t}{\e^2} \right) + \left( h_m^{1,\e} + h_m^{2,\e}\right) \left(x,t,\frac{x}{\e},\frac{t}{\e^2}\right) \\
& = \e^2 \p_t \left( \tilde{u}_m^\e + \tilde{v}_m^\e \right) + \left( - \e^2 \psi_m^\e + h_m^{1,\e} + h_m^{2,\e}\right) \left(x,t,\frac{x}{\e},\frac{t}{\e^2}\right),
\end{split}
\end{equation*}
where $\tilde{V}_k$ is given by \eqref{eq:Vtk}, $\psi_m^\e$ is given as in Proposition \ref{proposition:ini} and by $h_m^{1,\e}$ and $h_m^{2,\e}$ we simply denoted the terms so that we have the equalities above. Let us remark that $h_m^{1,\e}$ and $h_m^{2,\e}$ are well-defined, since $D^2 \tilde{u}_m^\e$ and $\partial_t \tilde{u}_m^\e$ exist in the classical sense. This is because the operator governing the interior equation \eqref{eq:utme-pde} for $\tilde{u}_m^\e$ is uniformly elliptic, smooth and concave; here the smoothness comes from the fact that $V_k \in E(0,\bar\alpha;k)$. Hence, the standard regularity theory \cite{W2} ensures the smoothness of $\tilde{u}_m^\e$, although it may not possess a uniform regularity for the time derivative and the spatial Hessian. In addition, it follows from the ellipticity condition \eqref{eq:ellip-F} of $F$ that for each $i=1,2$, one has 
\begin{equation}\label{eq:hmie-decay}
\left| h_m^{i,\e} (x,t,y,s) \right| \leq C_0 \sum_{k=m+1}^{m+2} \e^k \left| \tilde{V}_k(x,t,y,s)\right| \leq C_m \e^{m+1} e^{-\beta_m s},
\end{equation}
for any $0<\e\leq\frac{1}{2}$, where the second estimate follows from the exponential decay estimate of $v_k - \bar{v}_k$. 

This shows that $\tilde{u}_m^\e + \tilde{v}_m^\e$ satisfies the interior equation of \eqref{eq:utme-vtme-pde} with 
\begin{equation*}
r_m^\e = \psi_m^\e - \e^{-2} \left( h_m^{1,\e} + h_m^{2,\e} \right).
\end{equation*}
The decay estimate \eqref{eq:rme-decay} of $r_m^\e$ can be deduced from \eqref{eq:psime-decay} and \eqref{eq:hmie-decay}, which finishes the proof. 
\end{proof} 

In view of \eqref{eq:utme-pde}, one may realize that we are in a position to invoke Proposition \ref{proposition:int} to construct the higher order interior correctors for the new homogenization problem. This eventually leads us to a higher order approximation of $\tilde{u}_m^\e$ by the interior correctors, again up to some function with order $\e^2$. The function turns out to be a viscosity solution to a new homogenization problem essentially belongs to the same class of \eqref{eq:ue-pde}, which allows us to run a bootstrap argument. 

\begin{lemma}\label{lemma:intd-1}
Under the conclusion of Lemma \ref{lemma:inid-1}, let $\{V_k\}_{k=0}^\infty$ and $\{\bar{g}_k\}_{k=0}^\infty$ be as in \eqref{eq:Vk} and respectively \eqref{eq:gbk}. Then one can construct $\{w_k\}_{k=0}^\infty$, $\{w_k^\#\}_{k=0}^\infty$, $\{\bar{u}_k\}_{k=0}^\infty$, $\tilde{w}_m^\e$ and $\tilde{w}_m^{\#,\e}$ as in Proposition \ref{proposition:int}. Let $u_{1,m}^\e$ be the bounded viscosity solution to  
\begin{equation}\label{eq:u1me-pde}
\begin{dcases}
\begin{aligned}
\p_t u_{1,m}^\e &= \frac{1}{\e^4} F \left( \e^4 D^2 u_{1,m}^\e + \sum_{k=0}^m \e^k (V_k + W_k),x,t,\frac{x}{\e},\frac{t}{\e^2}\right) \\
& \quad - \frac{1}{\e^4} F \left( \sum_{k=0}^m \e^k (V_k + W_k) ,x,t,\frac{x}{\e},\frac{t}{\e^2}\right)
\end{aligned}
& \text{in }\R^n\times(0,T),\\
u_{1,m}^\e (x,0) = - \sum_{k=0}^{m-2} \e^k  w_{k+2} \left(x,0,\frac{x}{\e},0\right) & \text{on }\R^n,
\end{dcases}
\end{equation}
where $W_k$ is given as in \eqref{eq:Wk}. Then for any $ 0< \e\leq\frac{1}{2}$,  
\begin{equation}\label{eq:intd-1}
\left| \tilde{u}_m^\e (x,t) - \tilde{w}_m^\e (x,t) - \e^2 u_{1,m}^\e (x,t) \right| \leq C_m \e^{m-1},
\end{equation} 
for all $x\in\R^n$ and $0\leq t\leq T$. Moreover, one has  
\begin{equation}\label{eq:intd-log-1}
\left| \tilde{u}_m^\e (x,t) - \tilde{w}_m^{\#,\e} (x,t) - \e^2 u_{1,m}^\e (x,t) \right| \leq C_m \e^{m-1}
\end{equation} 
for all $x\in\R^n$ and $c_m\e^2 |\log \e|\leq t\leq T$.
\end{lemma}

\begin{proof}

As in the previous lemma, let us divide the proof into two cases.

\begin{case} Linear problems.
\end{case}

As in the proof of Lemma \ref{lemma:inid-1}, we shall begin with the linear case, and assume that $F(P,x,t,y,s) = \tr(A(x,t,y,s)P)$. Let $u_{1,m}^\e$ be the bounded solution to the linear version of \eqref{eq:u1me-pde}, i.e.,
\begin{equation}\label{eq:u1me-pde-lin}
\begin{dcases}
\partial_t  u_{1,m}^\e  = \tr \left(A \left( x,t,\frac{x}{\e},\frac{t}{\e^2}\right)D^2 u_{1,m}^\e \right) & \text{in }\R^n\times(0,T),\\
u_{1,m}^\e (x,0) = - \sum_{k=0}^{m-2} \e^k w_{k+2} \left(x,0,\frac{x}{\e},0\right) & \text{on }\R^n.
\end{dcases}
\end{equation}
Recall from Remark \ref{remark:int2} and Remark \ref{remark:int5} that $w_k = w_k^\#$ when $F$ is linear. 

In view of the equation \eqref{eq:wtme-pde-lin} for $\tilde{w}_m^\e$, we see that
\begin{equation*}
\partial_t \left( \tilde{w}_m^\e + \e^2 u_{1,m}^\e \right) = \tr \left(A \left( x,t,\frac{x}{\e},\frac{t}{\e^2}\right)D^2 \left( \tilde{w}_m^\e + \e^2 u_{1,m}^\e \right)  \right) + \psi_m^\e \left(x,t,\frac{x}{\e},\frac{t}{\e^2}\right) \quad\text{in }\R^n\times(0,T),
\end{equation*} 
while 
\begin{equation*}
\begin{split}
\tilde{w}_m^\e(x,0)  + \e^2 u_{1,m}^\e (x,0) &= \sum_{k=0}^m \e^k \left( w_k \left(x,0\frac{x}{\e},0\right) + \bar{u}_k(x,0)\right) - \e^2 \sum_{k=0}^{m-2} \e^k  w_{k+2} \left(x,0,\frac{x}{\e},0\right) \\
& = \sum_{k=0}^m \e^k \bar{u}_k (x,0) =  \bar{g}_m^\e(x),
\end{split}
\end{equation*} 
where we used the fact that $w_0 = w_1 = 0$ and $\bar{u}_k(\cdot,0) = \bar{g}_k$. Hence, we can compare $\tilde{w}_m^\e + \e^2 u_{1,m}^\e$ with $\tilde{u}_m^\e$, which solves \eqref{eq:utme-pde-lin}. This implies the first estimate \eqref{eq:intd-1}. Since we have $w_k = w_k^\#$ in the linear case (see Remark \ref{remark:int5}), we also have $\tilde{w}_m^\e = \tilde{w}_m^{\#,\e}$, so the second estimate \eqref{eq:intd-log-1} becomes trivial from \eqref{eq:intd-1}. 

\begin{case}
Nonlinear problems.
\end{case}

We assert that $\tilde{w}_m^\e + \e^2 u_{1,m}^\e$ is a viscosity solution to 
\begin{equation}\label{eq:wtme-u1me-pde}
\begin{dcases}
\begin{aligned}
\partial_t \left( \tilde{w}_m^\e + \e^2 u_{1,m}^\e \right) & = \frac{1}{\e^2} F \left( \e^2 D^2 \left( \tilde{w}_m^\e + \e^2 u_{1,m}^\e \right) + \sum_{k=0}^m \e^k V_k,x,t,\frac{x}{\e},\frac{t}{\e^2}\right) \\
& \quad - \frac{1}{\e^2} F \left( \sum_{k=0}^m \e^k V_k,x,t,\frac{x}{\e},\frac{t}{\e^2}\right) + r_m^\e \left(x,t,\frac{x}{\e},\frac{t}{\e^2}\right)
\end{aligned}
& \text{in }\R^n\times(0,T),\\
\left(\tilde{w}_m^\e + \e^2 u_{1,m}^\e \right)(x,0) = \bar{g}_m^\e(x) & \text{on }\R^n, 
\end{dcases}
\end{equation}
with some remainder term $r_m^\e$ satisfying
\begin{equation}\label{eq:rme-Linf}
\left| r_m^\e (x,t,y,s) \right| \leq C_m \e^{m-1}.
\end{equation}
Then one can deduce the desired estimate \eqref{eq:intd-1} by means of the comparison principle, as in the proof of Lemma \ref{lemma:inid-1}. Moreover the error estimate \eqref{eq:intd-log-1} away from the initial time layer follows from \eqref{eq:intd-1} and the exponential decay estimate of $w_k - w_k^\#$. 

Note that $V_k\in E(0,\bar\alpha;k)$ and that $\bar{v}_k \in S(k)$, which implies $\bar{g}_k = \bar{v}_k (\cdot,0) \in S(k)$, for any $k\geq 0$, so the sequences $\{V_k\}_{k=0}^\infty$ and $\{\bar{g}_k\}_{k=0}^\infty$ satisfy the assumption of Proposition \ref{proposition:int}. Thus, we obtain the sequence $\{w_k\}_{k=0}^\infty$ of higher order interior correctors, and the sequence $\{\bar{u}_k\}_{k=0}^\infty$ of higher order effective limits. 

The initial condition for $\tilde{w}_m^\e + \e^2 u_{1,m}^\e$ can be proved similarly as in the linear case, so we shall only focus on  the interior equation it satisfies. Since we have 
\begin{equation*}
\e^2 D^2 \tilde{w}_m^\e (x,t) = \sum_{k=0}^{m+2} \e^k \tilde{W}_k \left(x,t,\frac{x}{\e},\frac{t}{\e^2}\right) = \sum_{k=0}^m \e^k W_k \left(x,t,\frac{x}{\e},\frac{t}{\e^2}\right) + \sum_{k=m+1}^{m+2} \e^k \tilde{W}_k \left(x,t,\frac{x}{\e},\frac{t}{\e^2}\right),
\end{equation*}
with $\tilde{W}_k$ given as in \eqref{eq:Wtk}, it follows from the interior equations of \eqref{eq:wtme-pde} that 
\begin{equation*}
\begin{split}
& F \left( \e^2 D^2 \left( \tilde{w}_m^\e + \e^2 u_{1,m}^\e\right) + \sum_{k=0}^m \e^k V_k,x,t,\frac{x}{\e},\frac{t}{\e^2}\right) \\
& = F \left( \e^4 D^2 u_{1,m}^\e + \sum_{k=0}^m \e^k (V_k + W_k) + \sum_{k=m+1}^{m+2} \e^k \tilde{W}_k,x,t,\frac{x}{\e},\frac{t}{\e^2}\right) \\
& = F \left( \e^4 D^2 u_{1,m}^\e + \sum_{k=0}^m \e^k (V_k + W_k),x,t,\frac{x}{\e},\frac{t}{\e^2}\right) + h_m^{1,\e} \left(x,t,\frac{x}{\e},\frac{t}{\e^2}\right)  \\
& = \e^4 \partial_t u_{1,m}^\e + F \left( \sum_{k=0}^m \e^k (V_k + W_k) ,x,t,\frac{x}{\e},\frac{t}{\e^2}\right) + h_m^{1,\e} \left(x,t,\frac{x}{\e},\frac{t}{\e^2}\right) \\
& =  \e^4 \partial_t u_{1,m}^\e + F \left( \sum_{k=0}^m \e^k (V_k + W_k) + \sum_{k=m+1}^{m+2} \e^k \tilde{W}_k ,x,t,\frac{x}{\e},\frac{t}{\e^2}\right) + \left(h_m^{1,\e} + h_m^{2,\e}\right) \left(x,t,\frac{x}{\e},\frac{t}{\e^2}\right) \\
& = \e^2 \partial_t \left( \tilde{w}_m^\e + \e^2 u_{1,m}^\e\right) + \left( -\e^2 \psi_m^\e +  h_m^{1,\e} + h_m^{2,\e} \right) \left(x,t,\frac{x}{\e},\frac{t}{\e^2}\right),
\end{split}
\end{equation*}
where $\psi_m^\e$ is given by \eqref{eq:wtme-pde}, and by $h_m^{1,\e}$ and $h_m^{2,\e}$ we simply denoted the terms so that we have the equalities above. Arguing similarly as with the proof of Lemma \ref{lemma:inid-1}, one can justify the well-definedness of $h_m^{i,\e}$, for $i=1,2$, and deduce that 
\begin{equation}\label{eq:hmie-Linf}
\left| h_m^{i,\e} (x,t,y,s) \right| \leq C_0 \sum_{k=m+1}^{m+2} \e^k \left| \tilde{W}_k(x,t,y,s)\right| \leq C_m \e^{m+1},
\end{equation}
for any $0<\e\leq\frac{1}{2}$, where the second estimate follows from the observation that $w_k \in S(2,\bar\alpha;k)$. 

Hence, $\tilde{w}_m^\e + \e^2 u_{1,m}^\e$ satisfies the interior equation of \eqref{eq:wtme-u1me-pde} with 
\begin{equation*}
r_m^\e  = \psi_m^\e - \e^{-2} \left(h_m^{1,\e} + h_m^{2,\e}\right),
\end{equation*}
and the estimate \eqref{eq:rme-Linf} of $r_m^\e$ follows from \eqref{eq:psime-psimhe-decay-Linf} and \eqref{eq:hmie-Linf}. This finishes the proof. 
\end{proof}

As a corollary to the above lemmas, we achieve the following.

\begin{corollary}\label{corollary:iniintd-1}
One has, for any $0<\e\leq\frac{1}{2}$,  
\begin{equation}\label{eq:iniintd-1}
\left| u^\e (x,t) - \tilde{v}_m^\e(x,t) - \tilde{w}_m^\e (x,t) - \e^2 u_{1,m}^\e (x,t) \right| \leq C_m \e^{m-1},
\end{equation}
for all $x\in\R^n$ and $0\leq t\leq T$. In addition, 
\begin{equation}\label{eq:iniintd-log-1}
\left| u^\e (x,t) - \tilde{w}_m^{\#,\e} (x,t) - \e^2 u_{1,m}^\e (x,t) \right| \leq C_m \e^{m-1},
\end{equation} 
for all $x\in\R^n$ and $c_m\e^2 |\log \e| \leq t\leq T$. 
\end{corollary}

It is worthwhile to repeat that $u_{1,m}^\e$ is a solution to a homogenization problem essentially of the same type with \eqref{eq:ue-pde}. Hence, we can iterate the above arguments, provided that we can construct the higher order initial layer correctors and interior correctors in a more general setting. Here we shall only present the argument and skip the proof, since the main idea and the computations are already shown in the proofs of Proposition \ref{proposition:ini} and Proposition \ref{proposition:int}.

\begin{remark}\label{remark:higher-100}
One can finish the proof of our main theorem at this point in the case of linear operators. We also find it more helpful to the reader, since the proof for nonlinear operators involves many new notations and definitions, so it could be confusing without keeping the basic structure of the proof in mind. The argument goes as follows:

One may have already observed that the initial data in \eqref{eq:u1me-pde-lin} for $u_{1,m}^\e$ is slightly more general than the original problem \eqref{eq:ue-pde}, as the former is a (finite) summation of rapidly oscillating functions with different orders of $\e$. This motivates us to consider the higher order correction of the following problem,
\begin{equation}\label{eq:udme-pde-lin}
\begin{dcases}
\partial_t u_{d,m}^\e = \tr \left( A\left(x,t,\frac{x}{\e},\frac{t}{\e^2}\right) D^2 u_{d,m}^\e \right) & \text{in }\R^n\times(0,T),\\
u_{d,m}^\e (x,0) = \sum_{k=0}^{m-2d} \e^k g_{d,k}\left(x,\frac{x}{\e}\right) & \text{on }\R^n.\\
\end{dcases}
\end{equation}
Note that the original problem \eqref{eq:ue-pde} is the case when $d=0$, $g_{0,0} = g$ and $g_{0,k} = 0$ for any $k\geq 1$, whereas the newly generated problem \eqref{eq:u1me-pde-lin} is the case when $d=1$, $g_{1,k} = - w_{k+2}(\cdot,0,\cdot,0)$ for any $k\geq 0$. 

Therefore, applying one more round of higher order correction near the initial layer (Lemma \ref{lemma:inid-1}) and in the interior (Lemma \ref{lemma:intd-1}), we obtain 
\begin{equation}\label{eq:higher-lin}
\left| u_{d,m}^\e (x,t) - \tilde{v}_{d,m}^\e (x,t) - \tilde{w}_{d,m}^\e (x,t) - \e^2 u_{d+1,m}^\e (x,t) \right| \leq C_{d,m}\e^{m-1},
\end{equation} 
for any $x\in\R^n$ and any $0\leq t\leq T$, where $\tilde{v}_{d,m}^\e$ and $\tilde{w}_{d,m}^\e$ are the truncated summations of the higher order initial layer correctors and, respectively, the higher order interior correctors (at $d$-th step), while $u_{d+1,m}^\e$ is the bounded solution of \eqref{eq:udme-pde-lin} with the index $d$ replaced by $d+1$; for a more precise definition of $\tilde{v}_{d,m}^\e$ and $\tilde{w}_{d,m}^\e$, see Proposition \ref{proposition:inid} and Proposition \ref{proposition:intd}.

Beginning with $d=1$, one can iterate this argument for $\lfloor \frac{m}{2} \rfloor-1$ times. Adding the resulting estimates \eqref{eq:higher-lin} for $1\leq d\leq \lfloor \frac{m}{2} \rfloor-1$, and combining the resultant with the previous estimate \eqref{eq:iniintd-1}, we arrive at
\begin{equation*}
\left| u^\e (x,t) - \sum_{d=0}^{\lfloor \frac{m}{2} \rfloor -1} \e^d \left(\tilde{v}_{d,m}^\e (x,t) + \tilde{w}_{d,m}^\e(x,t)\right) + \e^{2\lfloor \frac{m}{2} \rfloor} u_{\lfloor \frac{m}{2} \rfloor}(x,t) \right| \leq C_m \e^{m-1},
\end{equation*}
for any $x\in\R^n$ and any $0\leq t\leq T$. Since $\e^{2\lfloor \frac{m}{2} \rfloor}  \leq \e^{m-1}$ and $\norm{u_{\lfloor \frac{m}{2} \rfloor}}_{L^\infty(\R^n\times[0,T])}\leq C_m$, this finishes the proof of \eqref{eq:higher} (as well as \eqref{eq:higher-log}) in the case of linear operators. 
\end{remark}

First comes the construction of higher order initial layer correctors.

\begin{proposition}\label{proposition:inid}
Assume that $F$ verifies \eqref{eq:ellip-F} - \eqref{eq:Ck-Ca-F}. Fix integers $d\geq 0$ and $m\geq 2d$. Let $\{X_{d,k} : \R^n\times[0,T]\times\T^n\times[0,\infty)\ra\cS^n\}_{k=0}^\infty$, $\{X_{d,k}^\# : \R^n\times[0,T]\times\T^n\times\T \ra \cS^n\}_{k=0}^\infty$ and $\{g_{d,k}: \R^n\times\T^n\ra \R\}_{k=0}^\infty$ be such that $X_{d,k}, X_{d,k}^\# \in S(0,\bar\alpha;d,k)$,  $X_{d,k} - X_{d,k}^\#\in E(0,\bar\alpha;d,k)$ and $g_{d,k} \in S(2,\bar\alpha;d,k)$. 

Then there exist $\{v_{d,k} : \R^n\times[0,T]\times\T^n\times[0,\infty)\ra \R\}_{k=0}^\infty$, $\{\bar{v}_{d,k}: \R^n\times[0,T]\ra \R\}_{k=0}^\infty$ such that $v_{d,k} \in S(2,\bar\alpha;d,k)$, $\bar{v}_{d,k} \in S(d,k)$ and $v_{d,k} - \bar{v}_{d,k} \in E(2,\bar\alpha;d,k)$ and the following is true. Set 
\begin{equation*}
\tilde{v}_{d,m}^\e (x,t) = \sum_{k=0}^{m-2d} \e^k \left( v_{d,k} \left(x,t,\frac{x}{\e},\frac{t}{\e^2}\right) - \bar{v}_{d,k}(x,t)\right), \quad  \bar{g}_{d,m}^\e (x) = \sum_{k=0}^{m-2d} \e^k \bar{v}_{d,k}(x,0).
\end{equation*}
Then $\tilde{v}_{d,m}^\e$ and $\bar{g}_{d,m}^\e$ satisfy 
\begin{equation*}
\begin{dcases}
\!\begin{aligned}
\p_t \tilde{v}_{d,m}^\e &= \frac{1}{\e^{2d+2}} F \left( \e^{2d+2} D^2 \tilde{v}_{d,m}^\e + \sum_{k=0}^m \e^k X_{d,k}  ,x,t,\frac{x}{\e},\frac{t}{\e^2}\right) \\
&\quad -  \frac{1}{\e^{2d+2}} F \left( \sum_{k=0}^m \e^k X_{d,k} ,x,t,\frac{x}{\e},\frac{t}{\e^2}\right) + \psi_{d,m}^\e \left(x,t,\frac{x}{\e},\frac{t}{\e^2}\right) 
\end{aligned}
& \text{in }\R^n\times(0,T),\\
\tilde{v}_{d,m}^\e (x,0) + \bar{g}_{d,m}^\e(x) = \sum_{k=0}^{m-2d} \e^k g_{d,k} \left(x,\frac{x}{\e}\right) & \text{on }\R^n,
\end{dcases}
\end{equation*}
with some $\psi_{d,m}^\e:\R^n\times[0,T]\times\T^n\times[0,\infty)\ra\R$ verifying 
\begin{equation*}
\left| \psi_{d,m}^\e (x,t,y,s) \right| \leq C_{d,m} \e^{m-2d-1} e^{-\beta_{d,m}s},
\end{equation*}
for any $0<\e\leq\frac{1}{2}$. 
\end{proposition}

\begin{remark}\label{remark:inid}
One may notice that Proposition \ref{proposition:ini} is simply the special case with $d = 0$, and $X_{0,k} = X_{d,k}^\# = 0$ for any $k \geq 0$, $g_{0,0} = g$ and $g_{0,k} = 0$ for any $k\geq 1$. Moreover, the new homogenization problem \eqref{eq:u1me-pde} falls under the case $d=1$, $X_{1,k} = V_k + W_k$, $X_{1,k}^\# = W_k^\#$ and $g_{1,k} (x,y) = \bar{u}_{k+2}(x,0) - w_{k+2} (x,0,y,0)$. 
\end{remark}

Next follows the construction of the higher order interior correctors. 

\begin{proposition}\label{proposition:intd} 
Assume that $F$ verifies \eqref{eq:ellip-F} - \eqref{eq:Ck-Ca-F}. Fix integers $d\geq 0$ and $m\geq 2d$. Let $\{Y_{d,k} : \R^n\times[0,T]\times\T^n\times[0,\infty)\ra\cS^n\}_{k=0}^\infty$, $\{Y_{d,k}^\# \in \R^n\times[0,T]\times\T^n\times\T\ra\cS^n\}_{k=0}^\infty$ and $\{\bar{g}_{d,k} : \R^n\ra\R\}_{k=0}^\infty$ be such that $Y_{d,k}, Y_{d,k} \in S(0,\bar\alpha;d,k)$, $Y_{d,k} - Y_{d,k}^\#\in E(2,\bar\alpha;d,k)$ and $\bar{g}_{d,k} \in S(d,k)$. 

Then there are $\{w_{d,k} : \R^n\times[0,T]\times\T^n\times[0,\infty) \ra \R\}_{k=0}^\infty$, $\{w_{d,k}^\# : \R^n\times[0,T]\times\T^n\times\T \ra \R\}_{k=0}^\infty$ and $\{\bar{u}_{d,k} : \R^n\times[0,T]\ra\R\}_{k=0}^\infty$ satisfying $w_{d,k}, w_{d,k} \in S(2,\bar\alpha;d,k)$, $w_{d,k} - w_{d,k}^\# \in E(2,\bar\alpha;d,k)$, $w_{d,0} = w_{d,1} = w_{d,0}^\# = w_{d,1}^\# = 0$, as well as $\bar{u}_{d,k} \in S(d,k)$ and $\bar{u}_{d,k}(\cdot,0) = \bar{g}_{d,k}$ for any $k\geq 0$, such that the following is true. Define 
\begin{align*}
\tilde{w}_{d,m}^\e (x,t) &= \sum_{k=0}^{m-2d} \e^k \left( w_{d,k} \left( x,t,\frac{x}{\e},\frac{t}{\e^2}\right) + \bar{u}_{d,k}(x,t)\right),\\
\tilde{w}_{d,m}^{\#,\e} (x,t) &= \sum_{k=0}^{m-2d} \e^k \left( w_{d,k}^\# \left( x,t,\frac{x}{\e},\frac{t}{\e^2}\right) + \bar{u}_{d,k}(x,t)\right).
\end{align*}
Then one has 
\begin{equation*}
\begin{split}
\p_t \tilde{w}_{d,m}^\e &= \frac{1}{\e^{2d+2}} F \left( \e^{2d+2} D^2\tilde{w}_m^\e + \sum_{k=0}^m \e^k Y_{d,k}, x,t,\frac{x}{\e},\frac{t}{\e^2}\right) \\
&\quad - \frac{1}{\e^{2d+2}} F \left( \sum_{k=0}^m \e^k Y_{d,k},x,t,\frac{x}{\e},\frac{t}{\e^2}\right) + \psi_{d,m}^\e \left(x,t,\frac{x}{\e},\frac{t}{\e^2}\right) \quad \text{in }\R^n\times(0,T),
\end{split}
\end{equation*}
and 
\begin{equation*}
\begin{split}
\p_t \tilde{w}_{d,m}^{\#,\e} &= \frac{1}{\e^{2d+2}} F \left( \e^{2d+2} D^2\tilde{w}_m^{\#,\e} + \sum_{k=0}^m \e^k Y_{d,k}^\#, x,t,\frac{x}{\e},\frac{t}{\e^2}\right) \\
&\quad - \frac{1}{\e^{2d+2}} F \left( \sum_{k=0}^m \e^k Y_{d,k}^\#,x,t,\frac{x}{\e},\frac{t}{\e^2}\right) + \psi_{d,m}^{\e,\#} \left(x,t,\frac{x}{\e},\frac{t}{\e^2}\right) \quad \text{in }\R^n\times(0,T),
\end{split}
\end{equation*}
with some $\psi_{d,m}^\e: \R^n\times[0,T]\times\T^n\times [0,\infty) \ra \R$ and  $\psi_{d,m}^{\#,\e}: \R^n\times[0,T]\times\T^n\times\T \ra \R$ satisfying 
\begin{equation*}
\left| \psi_{d,m}^{\#,\e} (x,t,y,s) \right| + e^{\beta_{d,m}s} \left| (  \psi_{d,m}^\e- \psi_{d,m}^{\#,\e} )(x,t,y,s)\right|  \leq C_{d,m}\e^{m-2d-1},
\end{equation*}
for any $0<\e\leq\frac{1}{2}$. 
\end{proposition}

\begin{remark}\label{remark:intd}
Proposition \ref{proposition:int} is the special case with $d=0$, $Y_{0,k} = V_k$ and $Y_{d,k}^\# = 0$. 
\end{remark}

Finally, we are ready to prove our main result.

\begin{proof}[Proof of Theorem \ref{theorem:higher}] 
Since the case $2\leq m\leq 3$ is already proved in Corollary \ref{corollary:iniintd-1}, we shall consider the case $m\geq 4$ only. As in the case of linear operators (Remark \ref{remark:higher-100}), the main goal is to prove that given suitable sequences $\{X_{d,k} : \R^n\times[0,T]\times\T^n\times[0,\infty)\ra\cS^n\}_{k=0}^\infty$ and $\{g_{d,k} : \R^n\times\T^n\ra\R\}_{k=0}^\infty$, where $X_{d,k}$ amounts to the nonlinear coupling effect and $g_{d,k}$ newly obtained rapidly oscillating initial data, the bounded solution $u_{d,m}^\e$ to the following initial value problem, 
\begin{equation}\label{eq:udme-pde}
\begin{dcases}
\begin{aligned}
\p_t u_{d,m}^\e &= \frac{1}{\e^{2d+2}} F \left( \e^{2d+2} D^2 u_{r,m}^\e + \sum_{k=0}^{m-2d} \e^k X_{d,k},x,t,\frac{x}{\e},\frac{t}{\e^2}\right) \\
&\quad - \frac{1}{\e^{2d+2}} F \left( \sum_{k=0}^{m-2d} \e^k X_{d,k},x,t,\frac{x}{\e},\frac{t}{\e^2}\right)
\end{aligned}
& \text{in }\R^n\times(0,T),\\
u_{d,m}^\e (x,0) = \sum_{k=0}^{m-2d} \e^k g_{d,k}\left(x,\frac{x}{\e}\right) & \text{on }\R^n,
\end{dcases}
\end{equation}
satisfies 
\begin{equation}\label{eq:iniintd}
\left| u_{d,m}^\e (x,t) - \tilde{v}_{d,m}^\e (x,t) - \tilde{w}_{d,m}^\e (x,t) - \e^2 u_{d+1,m}^\e (x,t) \right| \leq C_{d,m} \e^{m-1},
\end{equation}
for any $(x,t)\in\R^n\times[0,T]$, where $\tilde{v}_{d,m}^\e$ and $\tilde{w}_{d,m}^\e$ are the truncated summation of higher order correctors given as in Proposition \ref{proposition:inid} and respectively Proposition \ref{proposition:intd}. Here $u_{d+1,m}^\e$ is the bounded solution of \eqref{eq:udme-pde} with $d$ replaced by $d+1$ for some other $\{X_{d+1,k}\}_{k=0}^\infty$ and $\{g_{d+1,k}\}_{k=0}^\infty$, which satisfy the same properties as $\{X_{d,k}\}_{k=0}^\infty$ and respectively $\{g_{d,k}\}_{k=0}^\infty$. If we have \eqref{eq:iniintd}, then from the aforementioned propositions, we can also derive 
\begin{equation}\label{eq:iniintd-log}
\left| u_{d,m}^\e (x,t) -  \tilde{w}_{d,m}^{\#,\e} (x,t) - \e^2 u_{d+1,m}^\e (x,t) \right| \leq C_{d,m} \e^{m-1},
\end{equation}
for all $x\in\R^n$ and $c_{d,m}\e^2 |\log \e|\leq t\leq T$, where $\tilde{w}_{d,m}^{\#,\e}$ is given as in Proposition \ref{proposition:intd}. The rest of the proof is then the same as in Remark \ref{remark:higher-100}, so we can omit it. 

Henceforth, we shall focus on the construction of the sequences $\{X_{d,k}\}_{k=0}^\infty$ and $\{g_{d,k}\}_{k=0}^\infty$. In fact, in order to proceeds with the higher order correction, we shall also construct sequences $\{X_{d,k}^\# : \R^n\times[0,T]\times\T^n\times\T\ra\cS^n\}_{k=0}^\infty$ and $\{Y_{d,k}^\#:\R^n\times[0,T]\times\T^n\times\T\ra \R\}_{k=0}^\infty$ of the space-time periodic versions, as well as a sequence $\{\bar{g}_{d,k}: \R^n\ra \R\}_{k=0}^\infty$ of higher order effective initial data. 

For the initial case $d=1$, take 
\begin{equation*}
X_{1,k} = V_k + W_k,\quad X_{1,k}^\# = W_k^\#,
\end{equation*}
where $V_k$, $W_k$ and $W_k^\#$ are as in \eqref{eq:Vk}, \eqref{eq:Wk} and respectively \eqref{eq:Wkh}. Also define \begin{equation*}
g_{1,k} = - w_{k+2} (\cdot,0,\cdot,0),
\end{equation*}
with $w_k$ as in Lemma \ref{lemma:intd-1}. One can see from above that $\{X_{1,k}\}_{k=0}^\infty$, $\{X_{1,k}^\#\}_{k=0}^\infty$ and $\{g_{1,k}\}_{k=0}^\infty$ satisfy the assumption of Proposition \ref{proposition:inid} with $d = 1$. 

Now let $d\geq 1$ be any, and suppose that $\{X_{d,k}\}_{k=0}^\infty$, $\{X_{d,k}^\#\}_{k=0}^\infty$ and $\{g_{d,k}\}_{k=0}^\infty$ are already given as in Proposition \ref{proposition:inid}. Then we obtain $\{v_{d,k}\}_{k=0}^\infty$ and $\{\bar{v}_{d,k}\}_{k=0}^\infty$ from which one can define, as in \eqref{eq:Vk},
\begin{equation*}
V_{d,k} = 
\begin{dcases}
D_y^2 v_{d,0}, & k =0,\\
D_y^2 v_{d,1} + D_{xy} v_{d,0}, & k =1,\\
D_y^2 v_{d,k} + D_{xy} v_{d,k-1} + D_x^2 (v_{d,k-2} - \bar{v}_{d,k-2}), & k\geq 2,
\end{dcases}
\end{equation*} 
and 
\begin{equation*}
\bar{g}_{d,k} = \bar{v}_{d,k}(\cdot,0).
\end{equation*}
Then we set, for $k\geq 0$, 
\begin{equation*}
Y_{d,k} = 
\begin{dcases}
X_{d,k}, & 0\leq k\leq 2d-1, \\ 
X_{d,k} + V_{d,k-2d}, & k \geq 2d,
\end{dcases}
\end{equation*}
and
\begin{equation*}
Y_{d,k}^\# = X_{d,k}^\#.
\end{equation*}
Then from the assumptions that $X_{d,k},X_{d,k}^\#\in S(0,\bar\alpha;d,k)$ with $X_{d,k} - X_{d,k}^\# \in E(0,\bar\alpha;d,k)$ and the observation that $V_{d,k}\in E(0,\bar\alpha;d,k)$ it follows that $\{Y_{d,k}\}_{k=0}^\infty$, $\{Y_{d,k}^\#\}_{k=0}^\infty$ and $\{\bar{g}_{d,k}\}_{k=0}^\infty$ defined as above satisfy the conditions of Proposition \ref{proposition:intd}. Thus, one obtains $\{w_{d,k}\}_{k=0}^\infty$, $\{w_{d,k}^\#\}_{k=0}^\infty$ and $\{\bar{u}_{d,k}\}_{k=0}^\infty$ as in the proposition.

With such a choice of higher order interior correctors and effective limit profiles, we set
\begin{equation*}
W_{d,k} = 
\begin{dcases}
0, & k = 0,1, \\ 
D_y^2 w_{d,k} + D_{xy} w_{d,k-1} + D_x^2 (w_{d,k-2} + \bar{u}_{d,k-2}), & k \geq 2,
\end{dcases}
\end{equation*} 
and its time-periodic version by 
\begin{equation*}
W_{d,k}^\# = 
\begin{dcases}
0, & k = 0,1, \\ 
D_y^2 w_{d,k}^\# + D_{xy} w_{d,k-1}^\# + D_x^2 (w_{d,k-2}^\# + \bar{u}_{d,k-2}), & k \geq 2.
\end{dcases}
\end{equation*}

Now define, for $k\geq 0$, 
\begin{equation*}
X_{d+1,k} = 
\begin{dcases}
Y_{d,k}, & 0\leq k\leq 2d+1,\\
Y_{d,k} + W_{d,k-2d}, & k \geq 2d+2. 
\end{dcases}
\end{equation*}
and respectively the time-periodic version by 
\begin{equation*}
X_{d+1,k}^\# = 
\begin{dcases}
Y_{d,k}^\# = X_{d,k}^\#, & 0\leq k\leq 2d+1,\\
Y_{d,k}^\# + W_{d,k-2d}^\# = X_{d,k}^\# + W_{d,k-2d}^\#, & k \geq 2d+2,
\end{dcases}
\end{equation*}
as well as the new oscillatory initial data by
\begin{equation*}
g_{d+1,k} = - w_{d,k+2} (\cdot,0,\cdot,0). 
\end{equation*} 
By means of $w_{d,k}\in S(2,\bar\alpha;d,k)$, $w_{d,k}^\# \in S(2,\bar\alpha;d,k)$ with $w_{d,k} - w_{d,k}^\# \in E(2,\bar\alpha;d,k)$, $\bar{u}_{d,k} \in S(d,k)$ and the assumptions on $Y_{d,k}$ and $Y_{d,k}^\#$, one can verify that $\{X_{d+1,k}\}_{k=0}^\infty$, $\{X_{d+1,k}^\#\}_{k=0}^\infty$ and $\{g_{d+1,k}\}_{k=0}^\infty$ also satisfy the assumptions of Proposition \ref{proposition:inid}, which allows us to run an induction argument. 

To this end, given $m\geq 4$ and $1\leq d\leq \lfloor \frac{m}{2} \rfloor - 1$, we obtain $\tilde{v}_{d,m}^\e$, $\bar{g}_{d,m}^\e$ satisfying Proposition \ref{proposition:inid} (iii), and $\tilde{w}_{d,m}^\e$, $\tilde{w}_{d,m}^{\#,\e}$ satisfying Proposition \ref{proposition:intd} (iv). Following the arguments from Lemma \ref{lemma:inid-1} to Lemma \ref{lemma:intd-1}, one can prove \eqref{eq:iniintd} as in the conclusion of Corollary \ref{corollary:iniintd-1}. The other estimate \eqref{eq:iniintd-log} also follows immediately from the exponential decay estimates of $\tilde{v}_{d,m}^\e$ and $\tilde{w}_{d,m}^\e - \tilde{w}_{d,m}^\#$, which can be deduced by $v_{d,k} - \bar{v}_{d,k} \in E(2,\bar\alpha;d,k)$ and respectively $w_{d,k} - w_{d,k}^\# \in E(2,\bar\alpha;d,k)$ for all $k\geq 0$. As noted earlier, the rest of the proof follows then the same argument of Remark \ref{remark:higher-100}, so is omitted. This finishes the proof of Theorem \ref{theorem:higher}. 
\end{proof}


\section{Further Observations}\label{section:further}

This section is devoted to making some further observations on the (higher order) convergence rates for uniformly parabolic Cauchy problems. In Section \ref{subsection:nosc}, we obtain the higher order convergence rate for \eqref{eq:ube-pde}. In Section \ref{subsection:osc}, we achieve the optimal convergence rate for \eqref{eq:ue-pde-nl} under some additional structure condition on the operator $F$ and the initial data $g$.


\subsection{Non-Oscillatory Initial Data and Higher Order Convergence Rate}\label{subsection:nosc}

Based on the construction of the higher order correctors, we are able to achieve the higher order convergence rate of the homogenization process of the problem \eqref{eq:ube-pde}. The iteration argument is basically the same with the proof of Theorem \ref{theorem:higher}. The key difference here is that we begin with the higher order error correction in the interior, not near the initial time layer. This seems to be reasonable, since the initial data of \eqref{eq:ube-pde} is not rapidly oscillatory. 

The construction of the higher order interior correctors for \eqref{eq:ube-pde} is essentially the same with Proposition \ref{proposition:int}, and has already been studied in the authors' previous work \cite{KL1} in the framework of elliptic equations. 

\begin{proposition}\label{proposition:intd-nosc} Assume that $F:\cS^n\times\R^n\times[0,T]\times\T^n\times\T\ra\R$ satisfies \eqref{eq:ellip-F} - \eqref{eq:Ck-Ca-F}. Let $\{\bar{g}_k :\R^n\ra\R\}_{k=0}^\infty$ be such that $\bar{g}_k\in S(k)$. Then there exist $\{w_k  : \R^n\times[0,T]\times\T^n\times\T\ra\R\}_{k=0}^\infty$ and $\{\bar{u}_k : \R^n\times[0,T]\ra \R\}_{k=0}^\infty$ such that $w_k \in S(2,\bar\alpha;k)$, $w_0 = w_1 = 0$, $\bar{u}_k \in S(k)$ and $\bar{u}_k(\cdot,0) = \bar{g}_k$ for any $k\geq 0$, and the following holds. Set 
\begin{equation*}
\tilde{w}_m^\e (x,t) = \sum_{k=0}^m \e^k \left( w_k\left(x,t,\frac{x}{\e},\frac{t}{\e^2}\right) + \bar{u}_k(x,t)\right).
\end{equation*}
Then one has 
\begin{equation*}
\p_t \tilde{w}_m^\e = F \left( D^2 \tilde{w}_m^\e ,x,t,\frac{x}{\e},\frac{t}{\e^2}\right) + \psi_m^\e \left(x,t,\frac{x}{\e},\frac{t}{\e^2}\right) \quad \text{in }\R^n\times(0,T),
\end{equation*}
with some $\psi_m^\e : \R^n\times[0,T]\times\T^n\times\T\ra \R$ satisfying
\begin{equation*}
\left| \psi_m^\e (x,t,y,s) \right| \leq C_m \e^{m-1},
\end{equation*}
for any $0<\e\leq\frac{1}{2}$. 
\end{proposition}

\begin{proof} The main difference of the proof here from that of Proposition \ref{proposition:int} is that the function $\bar{u}_0$ in Lemma \ref{lemma:int} is chosen by the solution to 
\begin{equation*}
\begin{cases}
\p_t \bar{u}_0 = \bar{F} (D^2 \bar{u}_0,x,t) & \text{in }\R^n\times(0,T),\\
\bar{u}_0 (x,0) = \bar{g}_0(x) & \text{on }\R^n,
\end{cases}
\end{equation*}
instead of a linear equation \eqref{eq:ubk-pde} for $k=0$. It should be stressed that the matrix corrector $X$ and the effective coefficient $\bar{A}$ are chosen to be the same as those in Section \ref{subsection:int}. We omit the rest of the proof to avoid redundant arguments. 
\end{proof} 

Equipped with Proposition \ref{proposition:intd-nosc} together with Proposition \ref{proposition:ini} and Proposition \ref{proposition:int}, we are ready to state and prove the higher order convergence rate regarding the homogenization problem of \eqref{eq:ube-pde}. 

\begin{proposition}\label{proposition:higher-nosc} Assume that $F:\cS^n\times\R^n\times[0,T]\times\T^n\times\T\ra\R$ and $g:\R^n\ra \R$ satisfy \eqref{eq:ellip-F} - \eqref{eq:Ck-C2a-g}. Under these assumptions, let $u^\e$ be the bounded viscosity solution to \eqref{eq:ube-pde} for $\e>0$. Then for each integer $d\geq 0$, there exist sequences $\{\tilde{v}_{d,k},\tilde{w}_{d,k}:\R^n\times[0,T]\times\T^n\times[0,\infty)\}_{k=0}^\infty$ and $\{\tilde{w}_{d,k}^\#:\R^n\times[0,T]\times\T^n\times\T\ra\R\}_{k=0}^\infty$ such that one has, for any $m\geq 2$, any $\e\leq\frac{1}{2}$, any $x\in\R^n$ and any $0\leq t\leq T$,
\begin{equation*}
\left| u^\e (x,t) - \sum_{d=0}^{\lfloor \frac{m}{2} \rfloor -1 } \sum_{k=0}^{m-2d} \e^{k + 2d} \left( \tilde{v}_{d,k}\left( x,t,\frac{x}{\e},\frac{t}{\e^2}\right) + \tilde{w}_{d,k}\left( x,t,\frac{x}{\e},\frac{t}{\e^2}\right) \right)  \right| \leq C_m\e^{m-1},
\end{equation*}
and in particular, for $c_m \e^2|\log \e| \leq t \leq T$,
\begin{equation*}
\left| u^\e (x,t) - \sum_{d=0}^{\lfloor \frac{m}{2} \rfloor - 1} \sum_{k=0}^{m-2d} \e^{k + 2d} \tilde{w}_{d,k}^\# \left( x,t,\frac{x}{\e},\frac{t}{\e^2}\right) \right| \leq C_m\e^{m-1},
\end{equation*}
where $c_m$ and $C_m$ depend only on $n$, $\lambda$, $\Lambda$, $\alpha$, $m$, $T$ and $K$. 
\end{proposition}

\begin{proof} Let us fix $m\geq 2$. Due to Proposition \ref{proposition:intd-nosc}, we derive that for any $0<\e\leq\frac{1}{2}$, 
\begin{equation*}
\left| u^\e (x,t) - \tilde{w}_m^\e (x,t) - \e^2 u_{0,m}^\e \right| \leq C_m\e^{m-1},
\end{equation*}
for any $x\in\R^n$ and any $0\leq t\leq T$, where $u_{0,m}^\e$ is the bounded viscosity solution to \eqref{eq:udme-pde} for $d=0$, $X_{0,k} = X_{0,k}^\# = W_k$, with $W_k$ given as in \eqref{eq:Wk}. Thus, $u_{0,m}^\e$ falls under the setting of Proposition \ref{proposition:inid}, and hence we may proceed as in the proof of Theorem \ref{theorem:higher} and achieve the desired estimates. This finishes the proof. 
\end{proof}


\subsection{General Fully Nonlinear Problem and Convergence Rate}\label{subsection:osc}

Let us begin with a short overview the homogenization process of the problem \eqref{eq:ue-pde-nl}, which can be found in \cite{AB} and \cite{M1}. First we make an additional assumption on $F$ that there is $F_* :\cS^n\times\R^n\times[0,T]\times\T^n\times\T \ra \R$ for which 
\begin{equation}\label{eq:Fe-Fast}
\e^2 F \left(\frac{1}{\e^2} P,x,t,y,s\right) \ra F_* (P,x,t,y,s)\quad\text{as }\e\ra 0, 
\end{equation}
locally uniformly for all $(P,x,t,y,s)\in (\cS^n\setminus\{0\})\times\R^n\times[0,T]\times\T^n\times\T$. Here $F_*$ is called the recession operator (corresponding to $F$). It is clear from its definition that $F_*$ also satisfies \eqref{eq:ellip-F} - \eqref{eq:concave-F}. 

Following Lemma \ref{lemma:decay} and the comments above it, we obtain a (unique) function $v:\R^n\times[0,T]\times\T^n\times[0,\infty)\ra\R$ such that $v(x,t,\cdot,\cdot)$ is the solution to 
\begin{equation}\label{eq:v-pde-nl}
\begin{cases}
v_s = F_* ( D_y^2 v, x,t, y,s ) & \text{in }\T^n\times(0,\infty),\\
v (x,t,y,0) = g(y,x) & \text{on }\T^n.
\end{cases}
\end{equation}
for each $(x,t)\in\R^n\times[0,T]$. Also it induces $\bar{v}:\R^n\times[0,T]\ra\R$ given by
\begin{equation}\label{eq:vb-nl}
\bar{v}(x,t) = \lim_{s\ra\infty} v(x,t,0,s).
\end{equation}

On the other hand, let $\bar{F}$ and $w$ be defined as in the beginning of Section \ref{subsection:cell}. Under these circumstances, the $\e$-problem \eqref{eq:ue-pde-nl} is homogenized to the following effective problem 
\begin{equation}\label{eq:ub-pde}
\begin{cases}
\bar{u}_t = \bar{F} (D^2 \bar{u},x,t) & \text{in }\R^n\times(0,T),\\
\bar{u}(x,0) = \bar{g}(x) & \text{on }\R^n,
\end{cases}
\end{equation}
according to \cite{AB} and \cite{M1}, in the sense that the viscosity solution $u^\e$ of \eqref{eq:ue-pde-nl} converges to the viscosity solution $\bar{u}$ of \eqref{eq:ub-pde} locally uniformly in $\R^n\times(0,T)$. 

The following proposition gives the optimal rate of $u^\e \ra \bar{u}$ under some additional assumptions. 

\begin{proposition}\label{proposition:opt} Assume $F : \cS^n\times\R^n\times[0,T]\times\T^n\times\T\ra\R$ and $g:\R^n\times\T^n\ra\R$ verify \eqref{eq:ellip-F} - \eqref{eq:Ck-C2a-g}. Suppose that $F_*$ satisfies, with some $0\leq \delta<1$,   
\begin{equation}\label{eq:F-Fast}
| F(P,x,t,y,s) - F_*(P,x,t,y,s) | \leq K |P|^\delta,
\end{equation}
and that $v$ and $\bar{v}$ satisfy the conclusion of Proposition \ref{proposition:decay-Ck-C2a}. Under these circumstances, let $u^\e$ and $\bar{u}$ be the viscosity solutions to \eqref{eq:ue-pde-nl} and, respectively, \eqref{eq:ub-pde}. Then there are positive constants $c$ and $C$, depending only on $n$, $\lambda$, $\Lambda$, $\alpha$, $\delta$ and $K$, such that for any $0<\e\leq\frac{1}{2}$, 
\begin{equation}\label{eq:opt}
\left| u^\e (x,t) - \bar{u} (x,t) \right| \leq C\e^{\min(1, 2-2\delta)},
\end{equation}
for all $x\in\R^n$ and $c\e^2|\log\e| \leq t\leq T$. 
\end{proposition}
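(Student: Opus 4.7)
The plan is to compare $u^\e$ to a two-scale ansatz that combines the initial-layer corrector $\tilde v$ (which solves \eqref{eq:v-pde-nl} with the recession $F_*$) with the interior cell corrector $w$ (which solves \eqref{eq:w-pde-nl} with $F$). Specifically, letting $\bar u$ be the viscosity solution of \eqref{eq:ub-pde}, I would set
\begin{equation*}
\Psi^\e(x, t) := \bar u(x, t) + \tilde v\bigl(x, t, \tfrac{x}{\e}, \tfrac{t}{\e^2}\bigr) + \e^2 w\bigl(D^2\bar u(x, t), x, t, \tfrac{x}{\e}, \tfrac{t}{\e^2}\bigr).
\end{equation*}
The identities $\bar u(x, 0) = \bar v(x, 0)$ and $\tilde v(x, 0, y, 0) = g(x, y) - \bar v(x, 0)$ force $\Psi^\e(x, 0) = g(x, x/\e) + O(\e^2)$, so the initial data of $u^\e$ is matched up to $O(\e^2)$. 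I would then estimate $|u^\e - \Psi^\e|$ via the parabolic comparison principle applied to the residual $R^\e := \p_t\Psi^\e - F(D^2\Psi^\e, x, t, x/\e, t/\e^2)$, and use $|\Psi^\e - \bar u| \leq |\tilde v| + \e^2|w|$ in the interior regime $t \geq c\e^2|\log\e|$ to conclude.

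Using $\bar u_t = \bar F(P)$, $\p_s\tilde v = F_*(V)$, and $\p_s w = F(P+W) - \bar F(P)$, with $P = D^2\bar u$, $V = D_y^2\tilde v$, $W = D_y^2 w$, together with the positive $1$-homogeneity of $F_*$ (so $F_*(V)/\e^2 = F_*(V/\e^2)$), the residual takes the form
\begin{equation*}
R^\e = \bigl[F_*(V/\e^2) + F(P + W) - F(V/\e^2 + P + W)\bigr] + \mathcal{E}^\e,
\end{equation*}
where $\mathcal{E}^\e$ collects: (i) slow derivatives of $\tilde v$ of size $O(e^{-\beta s})$ by Proposition~\ref{proposition:decay-Ck-C2a}; (ii) mixed fast-slow derivative terms $\e^{-1}D_xD_y \tilde v = O(\e^{-1}e^{-\beta_1 s})$; (iii) standard $O(\e)$ Taylor remainders from the $w$-correction expansion, after applying ellipticity to absorb the $O(\|\text{mixed}\|)$ contribution of $\e\cdot D_xD_y w$ and $\e^2\cdot D_x^2 w$. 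These three classes of errors integrate in $t$ to $O(\e^2)$, $O(\e)$, and $O(\e)$, respectively. Moreover, the hypothesis $|F - F_*| \leq K\|P\|^\delta$ combined with $\|V\| \leq Ce^{-\beta_0 s}$ gives $|F(V/\e^2) - F_*(V/\e^2)| \leq C\e^{-2\delta}e^{-\beta_0\delta s}$, whose time integral is $O(\e^{2-2\delta})$.

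The main obstacle is that the bracketed ``nonlinearity defect'' $F_*(V/\e^2) + F(P + W) - F(V/\e^2 + P + W)$ is only $O(1)$ uniformly in $s$ and does not by itself decay, so a blind time integration would yield an unacceptable $O(T)$ error. To handle it, I would partition the time interval at $\tau_0 \asymp 2\beta_0^{-1}\e^2|\log\e|$, the scale at which $\|V/\e^2\|$ crosses unit size. On $t \in [0, \tau_0]$, bound the bracket uniformly by $C$ via the $\Lambda$-Lipschitz regularity of $F$ and $F_*$; this contributes $O(\e^2|\log\e|) = o(\e)$ to the time integral. On $t \in [\tau_0, T]$ one has $\|V/\e^2\| \leq C\e^{-2}e^{-\beta_0 s}$, and Lipschitz regularity gives $|F_*(V/\e^2)| + |F(V/\e^2 + P + W) - F(P + W)| \leq C\|V/\e^2\|$, whose integral $\int_{\tau_0}^T C\e^{-2}e^{-\beta_0\tau/\e^2}\,d\tau$ equals $O(\e^2)$.

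Collecting all contributions, the comparison principle yields $\|u^\e - \Psi^\e\|_{L^\infty(\R^n\times[0,T])} \leq C(\e + \e^{2-2\delta}) \leq C\e^{\min(1,2-2\delta)}$. Finally, for $t \geq c\e^2|\log\e|$ with $c \geq 1/\beta_0$, the decay from Lemma~\ref{lemma:decay} gives $|\tilde v(x, t, x/\e, t/\e^2)| \leq Ce^{-\beta_0 c|\log\e|} \leq C\e$, and trivially $|\e^2 w| \leq C\e^2$, so that $|\Psi^\e - \bar u| \leq C\e \leq C\e^{\min(1,2-2\delta)}$ since $\min(1,2-2\delta) \leq 1$. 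The triangle inequality then yields the claimed bound $|u^\e(x, t) - \bar u(x, t)| \leq C\e^{\min(1, 2-2\delta)}$ on the stated time range.
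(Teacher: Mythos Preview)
Your argument is correct and arrives at the same estimate, but it proceeds along a genuinely different route from the paper's. The paper factors the comparison through an intermediate solution $\bar u^\e$ of the Cauchy problem with shifted operator $\tilde F^\e(P,\cdot)=F(P+\e^{-2}V_0,\cdot)-F(\e^{-2}V_0,\cdot)$: it first bounds $|u^\e-\bar u^\e-\tilde v^\e|$ (only the $F_*$--$F$ discrepancy and the cross derivatives of $\tilde v$ enter, giving the $\e^{2-2\delta}$ contribution), and then, separately, bounds $|\bar u^\e-\bar u-\e^2\tilde w|$ by the interior-corrector framework of Lemma~\ref{lemma:int-apprx}, pushing the remainder into an auxiliary bounded solution $u_1^\e$. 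Because the $V_0$-shift is absorbed into the operator of the intermediate problem, the nonlinear coupling never appears as an explicit residual and no time-splitting is needed. Your single-ansatz approach is more elementary and self-contained (no auxiliary PDEs), at the price of having to confront the coupling defect $F(V/\e^2)+F(P+W)-F(V/\e^2+P+W)$ directly; your time-splitting at $\tau_0\asymp\e^2|\log\e|$ handles it cleanly, while the paper's two-step decomposition is better suited to bootstrap to higher-order expansions. One small caveat: your integral bound $\int_0^T \e^{-2\delta}e^{-\delta\beta_0\tau/\e^2}\,d\tau=O(\e^{2-2\delta})$ degenerates at $\delta=0$ (the integrand is then the constant $C$), but applying the same time-splitting to $|F(V/\e^2)-F_*(V/\e^2)|\le\min(K,2\Lambda\|V/\e^2\|)$ recovers an $O(\e^2|\log\e|)$ bound there, which is still dominated by $\e^{\min(1,2-2\delta)}=\e$.
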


\begin{remark}\label{remark:opt-rate} The inequality \eqref{eq:F-Fast} implies that 
\begin{equation}\label{eq:Fe-Fast-d}
\left| \e^2 F \left( \frac{1}{\e^2} P ,x,t,y,s \right) - F_* (P,x,t,y,s) \right| \leq K \e^{2-2\delta} |P|^\delta,
\end{equation}
for any $(P,x,t,y,s) \in\cS^n\times\R^n\times[0,T]\times\T^n\times\T$. In comparison of \eqref{eq:Fe-Fast-d} with \eqref{eq:opt}, we realize that the rate of $u^\e \ra \bar{u}$ depends sensitively on the rate of \eqref{eq:Fe-Fast}. 
\end{remark}

\begin{remark}\label{remark:opt-assump}
The second additional assumption that $v$ and $\bar{v}$ satisfy the assertion of Proposition \ref{proposition:decay-Ck-C2a} has been made because this assumption fails to hold for general $F_*$. The main reason is that nonlinear $F_*$ is Lipschitz continuous (in the matrix variable $P$) at best, which prevents us from having Proposition \ref{proposition:decay-Ck-C2a}. We shall provide some concrete example later in this regard. 
\end{remark}

\begin{proof}[Proof of Proposition \ref{proposition:opt}]
Throughout this proof, we will write by $c$ and $C$ positive constants depending at most on $n$, $\lambda$, $\Lambda$, $\alpha$, $\delta$ and $K$, and let them vary from one line to another. 

Define $\tilde{v}^\e: \R^n\times[0,T]\ra\R$ by 
\begin{equation}
\tilde{v}^\e (x,t) = v \left(x,t,\frac{x}{\e},\frac{t}{\e^2}\right) - \bar{v}(x,t). 
\end{equation} 
Since $v$ and $\bar{v}$ are assumed to satisfy \eqref{eq:decay-Ck-C2a} for all $m\geq 0$, we observe that $\tilde{v}^\e$ is a (classical solution) to 
\begin{equation*}
\begin{dcases}
\tilde{v}_t^\e = F \left( D^2 \tilde{v}^\e ,x,t,\frac{x}{\e},\frac{t}{\e^2}\right) + \psi^\e \left(x,t,\frac{x}{\e},\frac{t}{\e^2}\right) & \text{in }\R^n\times(0,T),\\
\tilde{v}^\e (x,0) = g\left(x,\frac{x}{\e} \right) - \bar{v}(x,0) & \text{on }\R^n,
\end{dcases}
\end{equation*}
with $\psi^\e: \R^n\times[0,T]\times\T^n\times[0,\infty)\ra\R$ defined by 
\begin{equation*}
\psi^\e (x,t,y,s) = F_* \left( \e^{-2} V_0 , x,t,y,s \right) - F \left( \e^{-2} V_0 + \e^{-1} V_1 + V_2  ,x,t,y,s\right),
\end{equation*}
where 
\begin{equation*}
V_0 = D_y^2 v,\quad V_1 = D_{xy} v,\quad V_2 = D_x^2 (v- \bar{v}). 
\end{equation*}

One may notice that \eqref{eq:decay-Ck-C2a} implies that all $V_0$, $V_1$ and $V_2$ satisfy the exponential decay estimate. Thus, utilizing \eqref{eq:F-Fast}, we observe that
\begin{equation}\label{eq:FastV0}
\left| F_* \left( \e^{-2} V_0 ,x,t,y,s\right) - F \left( \e^{-2} V_0 ,x,t,y,s\right) \right| \leq C\e^{-2\delta }e^{-\delta\beta s},
\end{equation}
for any $(x,y,s) \in \R^n\times\T^n\times[0,\infty)$ and any $0<\e\leq\frac{1}{2}$. On the other hand, we have from \eqref{eq:ellip-F} that
\begin{equation}\label{eq:FV0}
\left| F \left( \e^{-2} V_0 ,x,t,y,s\right)  - F \left( \e^{-2} V_0 + \e^{-1} V_1 + V_2 ,x,t,y,s \right) \right| \leq C\e^{-1} e^{-\beta s},
\end{equation}
for any $(x,t,y,s) \in \R^n\times[0,T]\times\T^n\times[0,\infty)$ and any $0<\e\leq\frac{1}{2}$. Combining \eqref{eq:FastV0} with \eqref{eq:FV0}, we arrive at 
\begin{equation*}
\left| \psi^\e (x,t,y,s) \right| \leq C\e^{-2\delta} e^{-\delta \beta s},
\end{equation*}
for any $(x,t,y,s) \in \R^n\times[0,T]\times\T^n\times[0,\infty)$ and any $0<\e\leq\frac{1}{2}$.

Thus, arguing analogously as in the proof of Lemma \ref{lemma:inid-1}, with the bounded viscosity solution $\tilde{u}^\e$ of 
\begin{equation*}
\begin{dcases}
\begin{aligned}
\tilde{u}_t^\e &=  \frac{1}{\e^2} F\left( D^2 \tilde{u}^\e + \e^{-2} V_0 x,t,\frac{x}{\e},\frac{t}{\e^2}\right)  \\
&\quad - \frac{1}{\e^2} F \left( \e^{-2} V_0,x,t,\frac{x}{\e},\frac{t}{\e^2}\right)
\end{aligned}
& \text{in }\R^n\times(0,T),\\
\tilde{u}^\e (x,0) = \bar{g}(x) & \text{on }\R^n.
\end{dcases}
\end{equation*} 
one has, for any $0<\e\leq\frac{1}{2}$, 
\begin{equation}\label{eq:opt-ini}
\left| u^\e (x,t) - \tilde{u}^\e (x,t) \right| \leq C\e^{2-2\delta},
\end{equation}
for all $x\in\R^n$ and all $c\e^2|\log\e| \leq t \leq T$.

On the other hand, we know that Proposition \ref{proposition:cell-Ck-C2a} is true under the assumptions \eqref{eq:ellip-F} - \eqref{eq:Ck-Ca-F} on $F$. Hence, it follows from the estimate \eqref{eq:cell-Ck-C2a} and the assumption \eqref{eq:decay-Ck-C2a}, which holds also for $\bar{g}$, that the solution $\bar{u}$ to \eqref{eq:ub-pde} satisfies $\bar{u} \in C^\infty(\R^n\times[0,T])$ and 
\begin{equation}\label{eq:ub-Ck}
\sum_{|\mu| + 2\nu = l}\left| D_x^\mu \p_t \bar{u} (x,t) \right| \leq C_l,
\end{equation}
for any $l  \geq 0$. Now let $w(x,t,\cdot,\cdot)$ be the unique solution of 
\begin{equation*}
\begin{dcases}
w_s = F \left( D_y^2 w + D_x^2 \bar{u}(x,t),x,t,y,s \right) - \bar{F}(D_x^2 \bar{u}(x,t),x,t)\quad\text{in }\T^n\times\T \\
w(x,t,0,0) =0,
\end{dcases}
\end{equation*}
for each $(x,t)\in\R^n\times[0,T]$. Due to Proposition \ref{proposition:cell-Ck-C2a} and \eqref{eq:ub-Ck}, we have $w\in C^\infty(\R^n\times[0,T];C^{2,\hat\alpha}(\T^n\times\T))$, for any $0<\hat\alpha<\bar\alpha$, and 
\begin{equation*}
\sum_{|\mu| + 2\nu = l}\norm{ D_x^\mu \p_t^\nu w(x,t,\cdot,\cdot) }_{C^{2,\bar\alpha}(\T^n\times\T)} \leq C_l,
\end{equation*}
for any $l  \geq 0$. 

Therefore, arguing as above, we observe that the function $\tilde{w}^\e : \R^n\times[0,T]\ra\R$, defined by
\begin{equation*}
\tilde{w}^\e (x,t) = \bar{u}(x,t) + \e^2 w \left(x,t,\frac{x}{\e},\frac{t}{\e^2}\right),
\end{equation*}
solves 
\begin{equation*}
\begin{dcases}
\tilde{w}_t^\e = F \left( D^2\tilde{w}^\e, x,t,\frac{x}{\e},\frac{t}{\e^2}\right) + \psi^\e \left(x,t,\frac{x}{\e},\frac{t}{\e^2}\right) & \text{in }\R^n\times(0,T),\\
\tilde{w}^\e (x,0) = \bar{g}(x) & \text{on }\R^n,
\end{dcases}
\end{equation*}
for some $\psi^\e : \R^n\times[0,T]\times\T^n\times\T\ra\R$ satisfying 
\begin{equation*}
\left| \psi^\e (x,t,y,s) \right| \leq C\e,
\end{equation*}
for any $0<\e\leq\frac{1}{2}$. 

Now we may proceed as in the proof of Lemma \ref{lemma:intd-1} and deduce that
\begin{equation}\label{eq:opt-int}
\left| u^\e (x,t) - \tilde{w}^\e \left(x,t,\frac{x}{\e},\frac{t}{\e^2}\right) - \e^2 u_1^\e (x,t)\right| \leq C\e,
\end{equation}
for all $x\in\R^n$ and all $0\leq t\leq T$, provided $0<\e\leq\frac{1}{2}$, where $u_1^\e$ is the bounded viscosity solution of \eqref{eq:u1me-pde}. Note that $u_1^\e$ is bounded globally, especially independent of $\e$. Finally, the error estimate \eqref{eq:opt} can be deduced by combining \eqref{eq:opt-ini} and \eqref{eq:opt-int}. 
\end{proof}

Let us finish this subsection with an example that reveals that the assumptions of Proposition \ref{proposition:opt} are satisfied for certain $F$ and $g$. 

\begin{example}\label{example:counter} Let $F_*:\cS^n\ra\R$ be such that $F_*(P) < - F_*(-P)$ for any $P\in\cS^n\setminus\{0\}$. For instance, one may take $F_*$ by Pucci's minimal operator for the lower ellipticity bound $\lambda'>\lambda$ and the upper ellipticity bound $\Lambda ' <\Lambda$. On the other hand, let $g: \R^n\times\T^n\ra\R$ be given by 
\begin{equation*}
g(x,y) = \psi(x) \phi(y),
\end{equation*} 
with $\phi:\T^n\ra\R$ and $\psi:\R^n\ra\R$ bounded and smooth.

Let us write by $F_-(P)$ and $F_+(P)$ the functionals $F_*(P)$ and, respectively, $- F_*(-P)$, and consider the spatially periodic Cauchy problem,
\begin{equation*}
\begin{cases}
\p_s v_\pm = F_\pm (D_y^2 v_\pm) & \text{in }\T^n\times(0,\infty),\\
v_\pm (y,0) = \phi(y) & \text{on }\T^n.
\end{cases}
\end{equation*} 
According to Lemma \ref{lemma:decay}, there are unique real numbers $\gamma_+$ and $\gamma_-$ such that $\gamma_\pm = \lim_{s\ra\infty} v_\pm(0,s)$. Notice that $v_\pm\in C^{2,\alpha}$ for some $0<\alpha<1$ depending only on $n$, $\lambda$ and $\Lambda$, owing to the convexity of $F_+$ and the concavity of $F_-$.

Let us observe that $\gamma_+ > \gamma_-$. First it follows from the comparison principle that $v_+ > v_-$ in $\T^n\times(0,\infty)$, which implies $\gamma_+ \geq \gamma_-$. Moreover, since $F_+ (P) > F_- (P)$ for any nonzero $P\in\cS^n$, the function $w = v_+ - v_-$ solves
\begin{equation}
\p_s (v_+ - v_-) \geq \tr (A (y,s) D_y^2 (v_+ - v_-))\quad\text{in }\T^n\times(0,\infty),
\end{equation}
where $A$ is the linearized coefficient associated with $F_+$. This implies that the function $W(s) = \min_{\T^n} (v_+(\cdot,s) - v_-(\cdot,s))$ is non-decreasing for $s>0$, whence we have $\gamma_+ > \gamma_-$.

Now let $v$ be the solution to \eqref{eq:v-pde-nl}. Then the uniqueness of $v$ implies that $v(x,y,s) = \psi(x)v_-(y,s)$ if $\psi(x)\geq 0$ and $v(x,y,s) = \psi(x)v_+(y,s)$ if $\psi(x)\leq 0$. This also implies that the function $\bar{v}$ defined by \eqref{eq:vb-nl} satisfies $\bar{v}(x) = \gamma_+\psi(x)$ if $\psi(x)\geq 0$ and $\bar{v}(x) = \gamma_-\psi(x)$ if $\psi(x)\leq 0$. 

This implies that if $\psi$ changes sign at some point, then $v$ and $\bar{v}$ are not even differentiable at that point. On the other hand, we have $v$ and $\bar{v}$ satisfying the conclusion of Proposition \ref{proposition:decay-Ck-C2a}, provided that $\psi$ is either uniformly positive or uniformly negative.  
\end{example}

\noindent
{\it Acknowledgement.}
We would like to thank the anonymous referee for the valuable comments, which significantly improved the exposition and the accuracy of the article.


\begin{thebibliography}{99999999}

\bibitem[AA99]{AA} 
G. Allaire, and M. Amar, 
{\it Boundary layer tails in periodic homogenization},
ESAIM Control, Optim. Calc. Var. {\bf 4} (1999), 209-243.

\bibitem[AB03]{AB} 
O. Alvarez and M. Bardi,
{\it Singular perturbations of nonlinear degenerate parabolic PDEs: a general convergence result}, 
Arch. Ration. Mech. Anal. {\bf 170}(1) (2003), 17-61.

\bibitem[ABM07]{ABM} 
O. Alvarez, M. Bardi and C. Marchi,
{\it Multiscale problems and homogenization for second-order Hamilton-Jacobi equations}, 
J. Differential Equations {\bf 243}(2) (2007), 329-387.

\bibitem[ABD$^+$08]{ABDW} 
I. V. Andrianov, V. I. Bolshakov, V. V. Danishev\'skyy, and D. Weichert,
{\it Higher order asymptotic homogenization and wave propagation in periodic composite materials}, 
Proc. Roy. Soc. A {\bf 464}(2093) (2008), 1181-1201.

\bibitem[AKM$^+$17]{AKMP} 
S. Armstrong, T. Kuusi, J.-C. Mourrat and C. Prange,
{\it Quantitative analysis of boundary layers in periodic homogenization}, 
Arch. Rational Mech. Anal. {\bf 226} (2017) 695-741. 

\bibitem[CM12]{CM} 
F. Camili and C. Marchi,
{\it Continuous dependence estimates and homogenization of quasi-monotone systems of fully nonlinear second order parabolic equations}, 
Nonlinear Anal. {\bf 75}(13) (2012), 5103-5118.

\bibitem[CE16]{CE} 
K. D. Cherendnichenko and J. A. Evans, 
{\it Full two-scale asymptotic expansion and higher-order constitute laws in the homogenization of the system of quasi-static Maxwell equations}, 
Multiscale Model. Simul. {\bf 14}(4) (2016), 1513-1539.

\bibitem[CS04]{CS} 
K. D. Cherednichenko and V. P. Smyshlyaev,
{\it On full two-scale expansion of the solutions of nonlinear periodic rapidly oscillating problems and higher-order homogenised variational problems}, 
Arch. Ration. Mech. Anal. {\bf 174}(3) (2004), 385-442.

\bibitem[CKL13]{CKL} 
S. Choi, I. C. Kim and K.-A. Lee,
{\it Homogenization of Neumann boundary data with fully nonlinear operator}, 
Anal. PDE {\bf 6} (2013), 951-972. 

\bibitem[CIL92]{CIL} 
M. G. Crandall, H. Ishii and P.-L. Lions, 
{\it User's guide to viscosity solutions of second order partial differential equations}, 
Bull. Amer. Math. Soc. {\bf 27}(1) (1992), 1-67. 

\bibitem[Eva89]{E1} 
L. C. Evans, 
{\it The perturbed test function method for viscosity solutions of nonlinear P.D.E.}, 
Proc. Roy. Soc. Edinburgh Sect. A {\bf 111} (1989), 359-375.

\bibitem[Eva92]{E2} 
L. C. Evans, 
{\it Periodic homogenisation of certain fully nonlinear partial differential equations}, 
Proc. Roy. Soc. Edinburgh Sect. A {\bf 120} (1992), 245-265.

\bibitem[Fel14]{F} 
W. Feldman, 
{\it Homogenization of the oscillating Dirichlet boundary condition in general domains}, 
J. Math. Pures Appl. {\bf 101} (2014) 599-622. 

\bibitem[GM12]{GM} 
D. G\`erard-Varet and N. Masmoudi, 
{\it Homogenization and boundary layers}, 
Acta Math. {\bf 209} (2013) 133-178.

\bibitem[HO14]{HO} 
T. Hui and C. Oskay,
{\it A high order homogenization model for transient dynamics of heterogeneous media including micro-inertia effects}, 
Comput. Methods Appl. Mech. Engrg. {\bf 273} (2014), 181-203.

\bibitem[Ich05]{Ich} 
N. Ichihara, 
{\it A stochastic representation for fully nonlinear PDEs and its application to homogenization}, 
J. Math. Sci. Univ. Tokyo {\bf 12}(3) (2005), 467-492.  

\bibitem[Ish99]{Ish} 
H. Ishii,
{\it Homogenization of the Cauchy problem for Hamilton-Jacobi equations}, 
Stochastic Analysis, Control, Optimization and Applications, pp. 305-324, Systems Control Found. Appl., Birkh\"{a}user Boston (1999).

\bibitem[JK02]{JK} 
E. R. Jakobsen and K. H. Karlsen,
{\it Continuous dependence estimates for viscosity solutions of fully nonlinear degenerate parabolic equations}, 
J. Differential Equations {\bf 183}(2) (2002), 497-525. 

\bibitem[KMP07]{KMP} 
V. Kamotski, K. Matthies and V. P. Smyshlyaev, 
{\it Exponential homogenization of linear second order elliptic PDEs with periodic coefficients}, 
SIAM J. Math. Anal. {\bf 38}(5) (2007), 1565-1587.

\bibitem[KL16]{KL1} 
S. Kim and K.-A. Lee,
{\it Higher order convergence rates in theory of homogenization: equations of non-divergence form}, 
Arch. Ration. Mech. Anal. {\bf 219}(3) (2016), 1273-1304.

\bibitem[KL18]{KL2} 
S. Kim and K.-A. Lee,
{\it Higher order convergence rates in theory of homogenization III: viscous Hamilton-Jacobi equations}, 
J. Differential Equations {\bf 265} (2018), 5384-5418. 

\bibitem[KLS19]{KLS} 
S. Kim, K.-A. Lee and H. Shahgholian,
{\it Homogenization of the boundary value for the Dirichlet problem}, 
Discrete Contin. Dyn. Syst. 2019, doi:10.3934/dcds.2019234.

\bibitem[Lin15]{L} 
J. Lin, 
{\it On the stochastic homogenization of fully nonlinear uniformly parabolic equations in stationary ergodic spatio-temporal media}, 
J. Differential Equations {\bf 258}(3) (2015), 796-845.
 
\bibitem[LS15]{LS} 
J. Lin and C. Smart,
{\it Algebraic error estimates for the stochastic homogenization of uniformly parabolic equations}, 
Anal. PDE {\bf 8}(6) (2015), 1497-1539. 

\bibitem[Mar05]{M1} 
C. Marchi, 
{\it Homogenization for fully nonlinear parabolic equations}, 
Nonlinear Anal. {\bf 60} (2005), 411-428.

\bibitem[Mar12]{M2} 
C. Marchi, 
{\it Continuous dependence estimates for the ergodic problem of Bellman-Isaacs operators via the parabolic Cauchy problem},
ESAIM Control Optim. Calc. Var. {\bf 18}(4) (2012), 954-968.

\bibitem[MS11]{MS} 
T. A. Mel\'nik and O. A. Sivak,
{\it Asymptotic approximations for solutions to quasilinear and linear parabolic problems with different perturbed boundary conditions in perforated domains}, 
J. Math. Sci. {\bf 177}(1) (2011), 50-70.

\bibitem[Wan92a]{W1} 
L. Wang,
{\it On the regularity theory of fully nonlinear parabolic equations: I}, 
Comm. Pure Appl. Math. {\bf 45}(1) (1992), 27-76.

\bibitem[Wan92b]{W2} 
L. Wang, 
{\it On the regularity theory of fully nonlinear parabolic equations: II}, 
Comm. Pure Appl. Math. {\bf 45}(2) (1992), 141-178.

\bibitem[Wan92c]{W3} 
L. Wang,
{\it On the regularity theory of fully nonlinear parabolic equations: III}, 
Comm. Pure Appl. Math. {\bf 45}(3) (1992), 255-262.

\end{thebibliography}
\end{document}